\newtheorem{lemma}{Lemma}[section]
\newtheorem{theorem}{Theorem}[section]
\newtheorem{definition}{Definition}[section]
\newtheorem{proposition}{Proposition}[section]
\def\bx{\mathbf{x}}
\def\bi{\mathbf{i}}
\def\bj{\mathbf{j}}
\def\b0{\mathbf{0}}
\def\bk{\mathbf{k}}
\def\ri{\iota}
\def\bbz{\mathbb{Z}}
\def\bka{\boldsymbol{\kappa}}
\title{\bf A discrete Perfectly Matched Layer for peridynamic scalar waves in two-dimensional viscous media}%\thanks{This work is supported in National Key Research and Development Project of China under grants No. 2024YFA1012600, NSFC under grants No. 12071401, 12171376, and the science and technology innovation Program of Hunan Province 2022RC1191. This work was supported in part by the High Performance Computing Platform of Xiangtan University. }}
\author{Yu Du\thanks{School of Mathematics and Computational Science, and Hunan Key Laboratory for Computation and Simulation in Science and Engineering, Xiangtan University, Hunan, 411105, China({\tt duyu@xtu.edu.cn})}
\and Yonglin Li\thanks{School of Mathematics and Statistics, and Hubei Key Laboratory of Computational Science, Wuhan University, Wuhan 430072, China.
({\tt yonglin.li@whu.edu.cn})}  
\and Jiwei Zhang \thanks{School of Mathematics and Statistics, and Hubei Key Laboratory of Computational Science, Wuhan University, Wuhan 430072, China. ({\tt jiweizhang@whu.edu.cn})}}
\date{}                                           % Activate to display a given date or no date
\begin{document}

\maketitle

\begin{abstract}
	In this paper, we propose a discrete perfectly matched layer (PML) for the peridynamic scalar wave-type problems in viscous media. Constructing PMLs for nonlocal models is often challenging, mainly due to the fact that nonlocal operators are usually associated with various kernels. We first convert the continua model to a spatial semi-discretized version by adopting quadrature-based finite difference scheme, and then derive the PML equations from the semi-discretized equations using discrete analytic continuation. The harmonic exponential fundamental solutions (plane wave modes) of the semi-discretized equations are absorbed by the PML layer without reflection and are exponentially damped. The excellent efficiency and stability of discrete PML are demonstrated in numerical tests by comparison with exact absorbing boundary conditions.
\end{abstract}

\section{Introduction}

Peridynamics (PD) is a nonlocal theory that has gained significant attention in recent years for its ability to model various problems in mechanics and physics, particularly those involving discontinuities such as cracks. The theory was first introduced by Silling in his seminal works \cite{silling2000reformulation,silling2007peridynamic}, and it has since been successfully applied to model the spontaneous formation of cracks \cite{silling2010crack,bobaru2016handbook,dahal2022evolution} and for Structural Health Monitoring (SHM) in both fluids and solids \cite{rahman2024peri}.
Unlike classical continuum mechanics (CCM), which relies on the assumption of spatial differentiability of displacement fields, peridynamics employs an integral formulation of force density. This nonlocal approach is particularly advantageous for modeling crack propagation, as it avoids the singularity issue encountered at the crack tip in local theories. In CCM, the stress at the crack tip is predicted to be infinite, which is not physically realistic for real materials \cite{madenci2013peridynamic}. Peridynamics, on the other hand, provides a more accurate representation of the stress distribution around the crack tip, making it a powerful tool for analyzing fracture mechanics and other discontinuous phenomena.

%Problems of wave propagation in unbounded domains play a paramount role in many applications. The infinite propagation of waves within unbounded domains is mathematically expressed through the use of an appropriate artificial/absorbing boundary condition (ABC). 

% \clr{
% Modeling the propagation of elastic waves in unbounded and semi-unbounded media is crucial in numerous fields of engineering and natural science. It is extensively applied in the analysis of soil-structure interactions and in Structural Health Monitoring (\cite{lee2011consistent,tadeu2007wave,kirby2013scattering,rahman2024peri}, etc). Although PD has attracted burgeoning interest over the past two decades, their application to unbounded-domain problems remains nascent in comparison to the well-established CCM models. To address challenges associated with unbounded-domain problems, researchers typically employ artificial boundary conditions (ABCs). Extensive investigation into ABCs has occurred since their initial introduction by Engquist \cite{engquist1977absorbing}, yet extending these to nonlocal models is problematic due to the nonlocal interactions between material points and the associated nonlocal operators which necessitate volume-constrained boundary conditions \cite{aksoylu2011variational,seleson2013interface,mossaiby2023multi}. Recent scholarly efforts have focused on the adaptation of ABCs for use with nonlocal models.
% }

Modeling the propagation of elastic waves in unbounded and semi-unbounded media is indeed crucial in various fields of engineering and natural science. It is extensively applied in the analysis of soil-structure interactions and in Structural Health Monitoring \cite{lee2011consistent,tadeu2007wave,kirby2013scattering,rahman2024peri}, among other areas.
Peridynamics, while having attracted significant interest over the past two decades, still lags behind CCM models in its application to unbounded-domain problems. One of the main challenges associated with unbounded-domain problems is the need for appropriate boundary conditions. Researchers typically employ artificial boundary conditions (ABCs) to address these challenges. Since their initial introduction by Engquist \cite{engquist1977absorbing}, ABCs have been extensively investigated. However, extending these to nonlocal models like PD is problematic due to the nonlocal interactions between material points and the associated nonlocal operators, which necessitate volume-constrained boundary conditions \cite{aksoylu2011variational,seleson2013interface,mossaiby2023multi}.
Recent scholarly efforts have been focused on adapting ABCs for use with nonlocal models. This involves developing new methods and techniques to account for the nonlocal nature of PD while maintaining the effectiveness of ABCs in simulating unbounded domains. By addressing these challenges, researchers aim to enhance the applicability of PD in modeling elastic wave propagation and other related problems in unbounded and semi-unbounded media.

%ABCs have been the subject of decade-long investigations since their first introduction in \cite{engquist1977absorbing}. However, the extension to nonlocal models is challenging as the interactions between material points are nonlocal and the corresponding nonlocal operators are associated with volume constrained boundary conditions \cite{aksoylu2011variational,seleson2013interface,mossaiby2023multi}. 
More recently, a number of studies have been conducted on the application of ABCs to nonlocal models. In one dimension, 
ABCs have been studied for the PD scalar wave-type equation in \cite{du2018nonlocal,ji2021accurate,wang2017transmitting}, for the nonlocal Schr{\"o}dinger equation in \cite{ji2020artificial,yan2020numerical}, and the nonlocal diffusion equation in \cite{zhang2017artificial,ZhengHuDuZhang}.
In two dimensions, \cite{DuHanZhangZheng,pang2022accurate,ma2022non} develop ABCs for the PD scalar wave-type equation. 
{Particularly, the authors of \cite{pang2022accurate,pang2023accurate,ji2021accurate} have recently proposed accurate ABCs for solving nonlocal models including peridynamics, which can be combined with spatial discrete schemes of arbitrary order.}
Although various methods have been proposed to develop ABCs for nonlocal models, recurring difficulties are evident. 
Specifically, the implementation of ABCs often leads to substantial demands on computational resources that complicate the practical application. 
% In addition, 
This is because the ABCs typically involve the Green's functions and Fourier (Laplace) transforms (for time-domain problems), rendering the computation of kernel functions especially complicated for nonlocal models, such as PD wave-type equation, due to their inherent nonlocality, see, e.g., \cite{zhang2017artificial}.

To address this issue, we introduce the PML method for PD scalar wave equation.
The PML method, which was originally proposed by B\'{e}renger \cite{Berenger1994}, is a numerical technique for describing wave propagation in unbounded media through the imposition of an absorbing layer, and has been well developed for local problems, e.g., \cite{becache2004perfectly,bermudez2007an,cw,cl03,collino1998the,turkel1998absorbing,li2018fem,li2023new,Wang2024ANovelPML}. Unlike ABCs that introduce boundary conditions to absorb outgoing waves at the artificial boundary, PML consists of a finite-thickness artificial-material layer surrounding the truncated portion of the physical domain. The layer effectively damps the waves while creating no reflection at the transitional interface. While preliminary work has been demonstrated with the PML method applied to peridynamics \cite{antoine2019towards,DuZhangNonlocal1,DuZhangNonlocal2,wildman2011,wildman2012a,wang2013matching,ji2020artificial,duzhang2021}, there has been rooms for improvement  because the method itself suffers from several drawbacks, such as the key concept underlying its formulation lies in extending the complex coordinate stretching approach of PMLs to the nonlocal models, which is impossible for the general nonlocal operator. Recently, \cite{chern2019reflectionless} proposed a discrete PML, which is based on the $2^{\mathrm{nd}}$-order finite difference equation and is derived directly using discrete complex analysis, aiming to reduce numerical reflections at the discrete level. \cite{vicentereflectionless2023} expanded the scope from
the standard second-order finite difference method to arbitrary high-order schemes. Since it is convenient to convert the continuous PD model \eqref{eq_PDeq2D} into a spatial semi-discretized version which is equivalent to a nonlocal lattice, the discrete PML approach introduced in \cite{chern2019reflectionless} for lattices can naturally be applied for this resulting spatial semi-discretized nonlocal operator.

The aim of this paper is to present an approach for deriving discrete PML equation for the PD scalar wave-type equation. To this end, we first convert the nonlocal model into a semi-discretized version by using the spatially asymptotic compatibility (AC)  schemes on a uniform grid. After that, we take the discrete wave equations and find their associated PML equations mimicking the continuous theory but solely in the discrete setting. The derivation particularly uses \emph{Discrete Complex Analysis}. This new approach to applying discrete PML in nonlocal models offers numerous advantages. For instance, it can be applied to general nonlocal operators whereas the classical PML methods require $\mathcal{L}$ is holomorphic. The resulting semi-discetized PML equation can absorb the discrete wave impinging on the interface between the physical and PML domains, and does not produces numerical reflections at the interface due to discretization error. This reflectionless property holds for all wavelengths, even for those at the scale of the grid size. Additionally, the new method is numerically long time stable, and remarkably simple to be implemented. 
Compared with the exact absorbing boundary conditions proposed in \cite{pang2022accurate,pang2023accurate,ji2021accurate}, the discrete PML method presented in this paper demonstrates significant improvements in both computational efficiency and stability through numerical experiments.

The paper is organized as follows. The spatial discrete scheme for the PD scalar wave-type equation is  introduced in Section~\ref{sec_fdm} and the discrete PML method is discussed in Section \ref{sec_derivedispml}. Section \ref{sec_implement} is devoted to the practical implementations of discrete PML. The effectiveness of the proposed PML method on the numerical accuracy and stability is presented in Section \ref{sec_num}.

%
%\section{Main results} \label{sec_mainresults}
%
%
%
%Adopting quadrature-based finite difference scheme yields the semi-discrete wave equation at any node $\bx_\bi$

\section{Wave equation and finite difference scheme} \label{sec_fdm}

In the literature, there are two main approaches to Peridynamics (PD): bond-based PD (BB-PD) \cite{silling2000reformulation} and state-based PD (SB-PD) \cite{silling2007peridynamic}.
This paper focuses on the peridynamic scalar wave equation, which is the simplest formulation within the PD framework. The scalar wave equation is a fundamental model that captures the essential features of wave propagation in a peridynamic medium. It serves as a starting point for understanding more complex phenomena and can be extended to the bond-based and state-based formulations in future studies.
In this section, the formulation of the peridynamic scalar wave equation is briefly introduced. Additionally, a quadrature-based finite difference scheme is presented as a numerical method for solving the equation. This scheme leverages numerical integration techniques to approximate the integral terms in the PD formulation, allowing for efficient and accurate simulations of wave propagation in peridynamic media given as 
% \clr{
% In the literature, The two main approaches of PD are bond-based PD (BB-PD) \cite{silling2000reformulation} and state-based PD (SB-PD) \cite{silling2007peridynamic}.
% State-based is further divided into ordinary state-based and non-ordinary state-based, whose difference is that for ordinary state-based, the PD forces between two material points are assumed to be acting along the bond direction.
% This paper focuses on the peridynamic scalar wave equation, which is the simplest formulation within the PD framework. The extension to BB-PD and SB-PD is a potential topic for future studies. This section briefly introduces the scalar wave equation’s formulation as well as (quadrature-based) finite difference scheme.

% The peridynamic scalar wave-type equation reads

\begin{align}
  \partial_t^2 u(\bx,t) = \mathcal{L}u(\bx,t) + f(\bx),\quad \bx\in\mathbb{R}^2,\ t>0, \label{eq_PDeq2D}
\end{align}
where $f(\bx)$ is the body force, and the nonlocal operator $\mathcal{L}$ is defined by
\begin{align}
  \mathcal{L}u(\bx) = \int_{\mathcal{H}_\bx} \gamma(\bx-\bx') \left( u(\bx',t) - u(\bx,t) \right) \mathrm d\bx'.   \label{eq_nonlocaloperator}
\end{align}
\( \mathcal{H}_\bx \), called the \emph{neighborhood} of \( \bx \), is a circular region centered at \( \bx \) with radius \( \delta \), known as the \emph{horizon}. %\( \gamma \) is the kernel function. 
For the sake of clarity, we only consider the case that the nonlocal medium is spatially homogeneous,  i.e., the kernel function \( \gamma \) only depends the bond \( \bx-\bx' \).

%In this paper we consider the perfectly matched layer (PML) method for the peridynamic (PD) scalar wave-type equation
%\begin{align}
%	\partial_t^2 u(\bx,t) = \mathcal{L}u(\bx,t) + f(\bx),\quad \bx\in\mathbb{R}^2,\ t>0, \label{eq_PDeq2D}
%\end{align}
%where $f(\bx)$ is the body force, and the nonlocal operator $\mathcal{L}$ is defined by
%\begin{align}
%	\mathcal{L}u(\bx) = \int_{\mathcal{H}_\bx} \gamma(\bx-\bx') \left( u(\bx',t) - u(\bx,t) \right) \mathrm d\bx'.	 \label{eq_nonlocaloperator}
%\end{align}
%\( \mathcal{H}_\bx \), called the \emph{neighborhood} of \( \bx \), is a circular region centered at \( \bx \) with radius \( \delta \), known as the \emph{horizon}. %\( \gamma \) is the kernel function. 
%For the sake of clarity, we only consider the case that the nonlocal medium is spatially homogeneous,  i.e., the kernel function \( \gamma \) only depends the bond \( \bx-\bx' \). 
For simplicity, in this paper, we consider only those nonnegative kernel functions of radial type, i.e., \( \gamma(\bx-\bx')=\gamma(|\bx-\bx'|) \), such as the common Gaussian kernel (cf. \cite{DuHanZhangZheng,wildman2012a,du2018nonlocal}, etc)
\begin{align}
 	\gamma(\bx-\bx') = \frac{4}{\pi\epsilon^4} e^{-\frac{|\bx-\bx'|^2}{\epsilon^2}},\quad \epsilon>0,
\end{align}
and the unbounded kernel (cf. \cite{pang2023accurate,hermann2023dirichlet}, etc)
\begin{align}
	\gamma(\bx'-\bx) = \bar{\gamma}(|\bx'-\bx|) \frac{1}{|\bx'-\bx|^{2+2s}},\quad 0\leq s<\frac12. \label{eq_generalkernel}
\end{align}
where the scalar-valued function $\bar{\gamma}$ is often defined as either Heaviside, piecewise linear or Gaussian functions within the neighborhood $\mathcal{H}_\b0$.
%By defining the bond $\boldsymbol{\xi}:=\bx'-\bx$, the Heaviside, linear and Gaussian functions may be written as follows:
%\begin{align} 
%	\bar{\gamma}_h(\boldsymbol{\xi}) =&~ \begin{cases}
%		1 & \|\boldsymbol{\xi}\|<\delta,    \\
%		0 & \|\boldsymbol{\xi}\|\geq\delta,
%	\end{cases} \label{eq_constantkernel} \\
%	\bar{\gamma}_l(\boldsymbol{\xi}) =&~ \begin{cases}
%		\left( 1- \frac{\|\boldsymbol{\xi}\|}{\delta}\right) & \|\boldsymbol{\xi}\|<\delta,    \\
%		0                                                    & \|\boldsymbol{\xi}\|\geq\delta,
%	\end{cases} \label{eq_linerakernel} \\
%	\bar{\gamma}_g(\boldsymbol{\xi}) =&~ \exp \left( -\frac{\boldsymbol{\|\xi\|}^2}{l^2} \right), \label{eq_gaussiankernel}
%\end{align}
%%The constant influence function is independent of the bond length, the linear influence function is
%%which depends linearly on the bond length. 
%It is clear that the consant and linear functions can not be analytically continued into the complex plane.

We now use an asymptotic compatible quadrature-based finite difference scheme \cite{du2019asymptotically,tian2014asymptotically} to the nonlocal wave equation \eqref{eq_PDeq2D}. This scheme is very robust for numerically approximating nonlocal modes because its convergence is not sensitive to the choice of modeling and discretization parameters, in particular for sufficiently small horizons $\delta$ or sufficiently fine grid spacing.

For simplicity, in what follows, the discrete PML is stated in two dimensions, but they can be readily extended to one or three (higher) dimensions.

Let $\{\bx_\bi = h \bi\in\mathbb{R}^2 : \bi=(i_1,i_2)\in\mathbb{Z}^2\}$ be the set of uniform Cartesian mesh nodes with mesh size $h$. With the main focus being on the discrete nonlocal PML, without loss of generality, we set \( f(\bx,t)=0 \) and consider the multi-half-space problem; that is, we leave \( \{i_1<0\}\cup \{i_2<0\} \) as the physical domain, and let \( \{i_1\geq0\}\cup \{i_2\geq0\} \) be the domain for PML.

Then nonlocal operator at node $\bx_\bi$ can be rewritten as
\begin{align}
	\mathcal{L} u(\bx_\bi,t) = \int_{\mathcal{H}_{\mathbf{0}}} \frac{ u(\mathbf{z}+\bx_\bi,t) - u(\bx_\bi,t) }{ W(\mathbf{z}) } W(\mathbf{z}) \gamma(\mathbf{z}) \mathrm{d} \mathbf{z},
\end{align}
where $W(\mathbf{z})=|\mathbf{z}|^2 / |\mathbf{z}|_1$ is a weight function. Here $|\cdot|_1$ stands for the $l_1$ norm and $|\cdot|$ denotes the standard Euclidean norm.

A quadrature-based AC finite difference scheme of the nonlocal operator \eqref{eq_nonlocaloperator} is given by
\begin{align}
	\mathcal{L}_h u(\bx_\bi) = \int_{\mathcal{H}_{\mathbf{0}}} \mathcal{I}_h \left( \frac{ u(\mathbf{z}+\bx_\bi) - u(\bx_\bi) }{ W(\mathbf{z}) } \right) W(\mathbf{z}) \gamma(\mathbf{z}) \mathrm{d} \mathbf{z},\label{eq_deflh}
\end{align}
where $\mathcal{I}_h(\cdot)$ represents the piecewise bilinear interpolation operator defined as 
\begin{align}
	\mathcal{I}_h \left( \frac{ u(\mathbf{z}+\bx_\bi) - u(\bx_\bi) }{ W(\mathbf{z}) } \right) = \sum_{\mathbf{k}\in\mathbb{Z}^2} \frac{ u(\mathbf{x}_\mathbf{k}+\bx_\bi) - u(\bx_\bi) }{ W(\mathbf{x}_\mathbf{k})} \phi_{\mathbf{k}}(\mathbf{z}),\quad \mathbf{z}\in\mathbb{R}^2.
\end{align}
Here $\phi_\mathbf{k}$ is the piecewise bilinear basis function satisfying $\phi_\mathbf{k}(\bx_\bi)=0$ when $\bi\neq\mathbf{k}$ and $\phi_{\mathbf{k}}(\bx_\mathbf{k})=1$.

Setting $u_\bi(t)$ to stand for an approximation of $u(\bx,t)$ at node $\bx_\bi$, we reformulate \eqref{eq_deflh} into
\begin{align}
	\mathcal{L}_h u_\bi(t) = \sum_{\bk\in\mathbb{Z}^2} a_\mathbf{k} u_{\bi+\bk}(t),  \label{eq_deflhui}
\end{align}
where $a_\mathbf{0}=-\sum_{\mathbf{k}\neq\mathbf{0}} a_\mathbf{k}$ and 
\begin{align}
	a_{\mathbf{k}} = \frac{1}{W(\bx_{\mathbf{k}})} \int_{\mathcal{H}_{\mathbf{0}}} \phi_\mathbf{k}(\mathbf{z}) W(\mathbf{z}) \gamma(\mathbf{z}) \mathrm{d} \mathbf{z},\quad \mathbf{k}\neq\mathbf{0}.
\end{align}
Thus, the resulting quadrature-based AC finite difference scheme~\eqref{eq_diswaveeq} can be derived by substituting \( \mathcal{L}_h \) into the nonlocal wave equation~\eqref{eq_PDeq2D} as 
\begin{align}
	\partial_t^2 u_\bi(t) &= \mathcal{L}_h u_\bi(t),\quad \bi\in \bbz^2.\label{eq_diswaveeq}
\end{align}

\section{Discrete PML} \label{sec_derivedispml}
We first recall a few preliminary facts.
%Under the following assumptions
%\begin{itemize}
%	\item We will assume that the space far from the region of interest is linear and time-invariant.
%	\item We will assume that the the wave far from the region of interest is bounded.
%\end{itemize}
For some constant amplitudes \( W_{\bka,\omega} \), the radiating solutions in the physical domain generally take the form of a superposition of plane waves
\begin{align*}
 	u(\bx_\bi,t) = \sum_{\bka,\omega} W_{\bka,\omega} e^{-\ri\omega t} e^{\ri\bka\cdot\bx_\bi},
\end{align*}
 where \( \omega \) is the (angular) frequency, \( \bka \) is the wavevector and \( \ri \) is the imaginary unit. For waves propagating in the \( +x_\alpha\, (\alpha=1,2) \) direction, \( \mathrm{sgn}(\omega)\Re \kappa_\alpha \) is positive. \( \Im \kappa_\alpha \) is less than or equal to $0$ because of the boundedness assumption. In the domain for PML, the PD scalar wave-type equation has the following nonlocal dispersion relation \cite{hermann2023dirichlet}
\begin{align}
 	-\omega^2 = \int_{\mathcal{H}_\b0} \gamma(\bx) \left( e^{\ri\bka\cdot\bx} - 1 \right) \mathrm{d} \bx = \int_0^\infty \int_0^{2\pi} \gamma(r) \left( 1 - \cos(|\bka|r\cos\theta) \right) \mathrm{d}r \mathrm{d}\theta,
\end{align}
where $\bka = |\bka|(\cos\theta,\sin\theta)$.

The solution of the semi-discrete wave equation consists of elementary waves \[ u_\bi(t) = e^{-\ri\omega t}e^{\ri\boldsymbol{\kappa}^h\cdot \bi h}, \] where \( \omega\in\mathbb{R} \) and \( \bka^h=(\kappa_1^h,\kappa_2^h)\in\mathbb{C}^2 \) are linked by the discrete dispersion relation
\begin{align}
 	-\omega^2 =&~ a_0 + \sum_{k\in\bbz^+} 2a_k \left( \cos(\kappa_1^h h) + \cos(\kappa_2^h h) \right) + \sum_{\bk\in(\bbz^+)^2} 4 a_\bk \cos(\kappa_1^hh)\cos(\kappa_2^hh). \label{eq_disdeprel}
\end{align}
%The feasible values of wave numbers \( \kappa_\alpha^h \) are further restricted to
%\begin{align}
% 	\mathcal{K} := \{ \kappa_\alpha^h\subset\mathbb{C}|\ -\frac{\pi}{h} < \Re\kappa_\alpha^h \leq \frac{\pi}{h},\ \Im \kappa_\alpha^h <0 \}, \quad \alpha=1,2,
%\end{align}
%using the periodicity of \( e^{\ri \kappa_\alpha^h i_\alpha h} \) and the boundedness assumption of \( u_\bi(t) \). For waves propagating in the \( +x_\alpha \) direction, \( \mathrm{sgn}(\omega)\Re \kappa_\alpha^h \) is positive. The wave that has \( \Im\kappa_\alpha^h=0 \) for all \( \alpha \) is called a plane wave. The wave that has \( \Im \kappa_\alpha <0 \) for some \( \alpha \) is called an evanescent wave (cf. \cite{chern2019reflectionless}).

In the continuous theory, the PML equations are derived by using complex coordinate stretching of coordinates in the analytic continuation of
the wave equation. We take a glimpse of the classical PML for the nonlocal model via complex coordinate transform (see, e.g., \cite{antoine2019towards})
\begin{align}
 	x_\alpha \to \tilde{x}_\alpha = x_\alpha + \ri \int_0^\alpha \frac{\sigma^\alpha(s)}{\omega} \mathrm{d} s,
\end{align}
where \( \sigma^\alpha \) is the PML absorbing satisfying \( \sigma^\alpha(s)|_{s>0}>0 \) and \( \sigma^\alpha(s)|_{s\leq0}=0 \). The classical PML on Cartesian coordinates is seeking a complexification of the wave equation by combining the Fourier (Laplace) transform and the complex coordinates 
\begin{align}
 	\prod_{\alpha=1}^2 (\partial_t + \sigma^\alpha) u(\bx,t) = \mathcal{L}^{pml} \partial_t u(\bx,t) + \mathcal{L} u(\bx,0),
\end{align}
where \( \mathcal{L}^{pml} \) is the nonlocal operator for the PML (cf. \cite{du2023numerical,duperfectly}). However, classical PML has two insurmountable difficulties: First, the definition of \(\mathcal{L}^{pml} \) combines the time convolution with the inverse Laplace transformation which is numerically expensive; Second, it requires the assumption that \(\mathcal{L} \) should be a holomorphic operator along each component, which is too strict to be implemented.

%Substituting the complex coordinates into the wave equation~\eqref{eq_PDeq2D}, and seeking a complexification of the wave equation, we have the PML equation
% \( e^{\ri\boldsymbol{\kappa}^h\cdot \bi h} \), we have the damping modes \( \prod_{\alpha=1}^2 e^{\ri \kappa_\alpha^h (i_\alpha h + \ri \int_0^{i_\alpha h} \frac{\sigma^\alpha}{\omega} \mathrm{d} s)} \). The clcassical PML on Cartesian coordinates is seeking a complexification of the wave equation such that its fundamental solutions are the damping modes.

To overcome these difficulties, we consider the discrete PML method proposed by Albert Chern in \cite{chern2019reflectionless}, which aims to find a ``complexification'' of the discrete equation~\eqref{eq_diswaveeq} by using discrete complex analysis. The normal modes of the discrete PML equation are the discrete analytic continuation of \( e^{\ri\boldsymbol{\kappa}^h\cdot \bi h} \), given by
\begin{align}
 	 \prod_{\alpha=1}^2 \eta(i_\alpha,\sigma^\alpha,\kappa_\alpha^h,\omega) e^{\ri \kappa_\alpha^h i_\alpha h}\quad \mbox{ with }\quad \eta(i_\alpha,\sigma^\alpha,\kappa_\alpha^h,\omega)=  \prod_{l=0}^{i_\alpha-1} \frac{ 2+\ri\frac{\sigma^\alpha_l}{\omega}(1-e^{-\ri \kappa_\alpha^h h}) }{ 2+\ri\frac{\sigma^\alpha_l}{\omega}(1-e^{\ri \kappa_\alpha^h h}) }, \label{eq_disanacondiswave}
\end{align}
where \( \eta(i_\alpha,\sigma^\alpha,\kappa_\alpha^h,\omega) \) is an approximant of \( e^{-\kappa_\alpha^h \int_0^{i_\alpha h} \frac{\sigma^\alpha}{\omega} \mathrm{d} s} \) by Pad{\'e} approximation as
\begin{align}
 	e^{-\kappa_\alpha^h \int_0^{i_\alpha h} \frac{\sigma^\alpha}{\omega} \mathrm{d} s} = \prod_{l=0}^{i_\alpha-1} e^{-\kappa_\alpha^h \frac{\sigma^\alpha_l}{\omega}h} \approx \prod_{l=0}^{i_\alpha-1} \frac{ 2-\sigma_l^\alpha h \frac{\sigma^\alpha_l}{\omega} }{ 2+\sigma_l^\alpha h \frac{\sigma^\alpha_l}{\omega} } \approx \prod_{l=1}^{i_\alpha-1} \frac{ 2+\ri\frac{\sigma^\alpha_l}{\omega}(1-e^{-\ri \kappa_\alpha^h h}) }{ 2+\ri\frac{\sigma^\alpha_l}{\omega}(1-e^{\ri \kappa_\alpha^h h}) }.
\end{align}
Here we denote by \( \sigma^\alpha_l = \sigma^\alpha(s_l) \) for some \( s_l\in( lh,(l+1)h) \) with the mean value theorem for integral. For \( \sigma_l^\alpha>0,\omega\in\mathbb{R}, \) and \( \kappa_\alpha^h\in\mathcal{K} \), it is easy to verify that
\begin{align}
 	\left| \frac{ 2+\ri\frac{\sigma^\alpha_l}{\omega}(1-e^{-\ri \kappa_\alpha^h h}) }{ 2+\ri\frac{\sigma^\alpha_l}{\omega}(1-e^{\ri \kappa_\alpha^h h})} \right| < 1.\label{est_exp}
\end{align}
 Therefore, the discrete analytic continuation~\eqref{eq_disanacondiswave} decays exponentially as \( i_\alpha\to\infty \).

In this section, we will use this idea to derive the discrete PML based on the discrete nonlocal wave equation~\eqref{eq_diswaveeq}.

\subsection{Discrete complex differentiation}
For the applications to  discrete PML, we only introduce some essential definitions to calculate the analytic continuation of discrete plane waves (cf. \cite{chern2019reflectionless,vicentereflectionless2023}). One can also refer to  \cite{duffin1956basic,bobenko2005linear,lovasz2004discrete,bobenko2016discrete} for further study. Let \( \Lambda \) be an infinite quadrilateral lattice indexed by \( \bbz^2 \) which is described by a set of vertices \( V(\Lambda)=\bbz^2 \), %a set of edges 
%\begin{align*}
% 	E(\Lambda) = \{((i,j),(i+1,j)),((i,j),(i,j+1))|\ (i,j)\in\bbz^2\},
%\end{align*}
a set of faces
\begin{align*}
 	 F(\Lambda)=\{Q_{i+\frac12,j+\frac12}:=((i,j),(i+1,j),(i+1,j+1),(i,j+1))|\ (i,j)\in\bbz^2 \}.
\end{align*}

Denote by \( \mathbb{C}^{\bbz\times\bbz} \) the set of functions \( v:\bbz\times\bbz\to\mathbb{C} \), and by \( \mathbb{C}^{\bbz^2\times\bbz^2} \) the set of functions \( v:\bbz^2\times\bbz^2\to\mathbb{C} \). For any \( v\in\mathbb{C}^{\bbz\times\bbz} \), let \( v_{\bi} \) (or \( v_{i_1,i_2} \)) be its value on vertice \( \bi=(i_1,i_2)\in\bbz^2 \). For any \( v\in\mathbb{C}^{\bbz^2\times\bbz^2} \), let \( v_{\bi,\bj} \) (or \( v_{(i_1,i_2),(j_1,j_2)} \)) be its value on \( (\bi,\bj)=((i_1,i_2),(j_1,j_2))\in\bbz^2\times \bbz^2 \).

Let \( z:V(\Lambda)\to\mathbb{C} \) be a complex valued function on \( V(\Lambda) \), i.e., \( z\in\mathbb{C}^{\bbz\times\bbz} \). The lattice \( \Lambda \) with the structure \( z=(z_\bi)_{\bi\in V(\Lambda)} \) characterizes a discrete complex domain \( (\Lambda,z) \).

%Let \( f \) be another quadrilateral graph realized in \( \mathbb{C} \) given by values assigned to vertices \( f=(f_\bi)_{\bi\in V(\Lambda)} \).

\begin{definition}[Discrete holomorphicity]
	A complex-valued function \( f\in\mathbb{C}^{\bbz\times\bbz} \) is said to be complex-differentiable with respect to \( z \) at a face \( Q_{i+\frac12,j+\frac12} \in F(\Lambda) \) if \( f \) satisfies the  following discrete Cauchy-Riemann equation
	\begin{align}
	 	\frac{ f_{i+1,j+1} - f_{i,j} }{ z_{i+1,j+1} - z_{i,j} } = \frac{ f_{i,j+1} - f_{i+1,j} }{ z_{i,j+1} - z_{i+1,j} }.	\label{eq_defdiscomder}
	\end{align}
And \( f \) is said to be discrete-holomorphic on \( (\Lambda,z) \) if \( f \) is complex-differentiable with respect to \( z \) at every face in \( F(\Lambda) \). The discrete complex derivative $D_zf(Q_{i+\frac12,j+\frac12})$ is defined as  \eqref{eq_defdiscomder}.
\end{definition}

%Next, we use the notion of discrete holomorphicity to study the shift operators and  discrete complex difference, and from there we derive the discrete PML equations.

Now in order to derive the discrete PML equations, for each dimensional component \( \alpha \) of the original domain \( \bbz^2 \) a discrete complex domain \( (\Lambda, z^\alpha) \) is designed, where \( z^\alpha\in\mathbb{C}^{\bbz\times\bbz} \) is given by
\begin{align}
 	z_{i,j}^\alpha = (i+j)h + \ri\frac{h}{\omega} \sum_{l=0}^{j-1} \sigma_l^\alpha,\quad i,j\in\bbz		\label{eq_defzij},
\end{align}
where \( \sigma^\alpha_l \) are the PML absorbing coefficients satisfying \( \sigma^\alpha_l \geq 0\) for all $l$ and $\sigma_l^\alpha=0$ if $l<0$.

\begin{definition}
We say that \( w_{\bi,\bj}\in\mathbb{C}^{\bbz^2\times\bbz^2} \) is discrete holomorphic on \( (\Lambda, z^\alpha) \) for each fixed \( \alpha \in\{1,2\} \) if and only if the following identities  simultaneously hold: 
\begin{align}
 	& \frac{ w_{(i_1+1,i_2),(j_1+1,j_2)} - w_{\bi,\bj} }{ z_{i_1+1,j_1+1}^1 - z_{i_1,j_1}^1 } = \frac{ w_{(i_1,i_2),(j_1+1,j_2)} - w_{(i_1+1,i_2),(j_1,j_2)} }{ z_{i_1,j_1+1}^1 - z_{i_1+1,j_1}^1 }, \label{eq_udishol1}\\
 	& \frac{ w_{(i_1,i_2+1),(j_1,j_2+1)} - w_{\bi,\bj} }{ z_{i_2+1,j_2+1}^2 - z_{i_2,j_2}^2 } = \frac{ w_{(i_1,i_2),(j_1,j_2+1)} - w_{(i_1,i_2+1),(j_1,j_2)} }{ z_{i_2,j_2+1}^2 - z_{i_2+1,j_2}^2 }. \label{eq_udishol2}
\end{align}
\end{definition}

For ease of presentation, let $\tau_k^\alpha$ be the translation operator with respect to $i_\alpha$ and \( \rho_k^\alpha \) be the translation operator with respect to $j_\alpha$,
\begin{align}
 	& \tau_k^1 w_{\bi,\bj} = w_{(i_1+k,i_2),\bj},\ \tau_k^2 w_{\bi,\bj} = w_{(i_1,i_2+k),\bj},\quad \forall w_{\bi,\bj}\in \mathbb{C}^{\bbz^2\times\bbz^2},\\
 	& \rho_k^1 w_{\bi,\bj} = w_{\bi,(j_1+k,j_2)},\ \rho_k^2 w_{\bi,\bj} = w_{\bi,(j_1,j_2+k)},\quad \forall w_{\bi,\bj}\in \mathbb{C}^{\bbz^2\times\bbz^2}.
\end{align}
Thus, \eqref{eq_udishol1} and \eqref{eq_udishol2} can be rewritten as
\begin{align}
 	\frac{ \tau_1^\alpha\rho_1^\alpha w_{\bi,\bj} - w_{\bi,\bj} }{ \tau_1^\alpha\rho_1^\alpha z_{\bi,\bj}^\alpha - z_{\bi,\bj}^\alpha } = \frac{ \rho_1^\alpha w_{\bi,\bj} - \tau_1^\alpha w_{\bi,\bj}}{ \rho_1^\alpha z_{\bi,\bj}^\alpha - \tau_1^\alpha z_{\bi,\bj}^\alpha },\quad \alpha=1,2, \label{eq_discomderw}
\end{align}
where \( z_{\bi,\bj}^\alpha\in\mathbb{C}^{\bbz^2\times\bbz^2} \) is defined by \( z_{\bi,\bj}^\alpha=z_{i_\alpha,j_\alpha}^\alpha \) and \( z_{i_\alpha,j_\alpha}^\alpha \) is given by \eqref{eq_defzij}.

Define scaled discrete complex derivative at the face $Q_{i_\alpha+\frac12,j_\alpha+\frac12}$ by
\begin{align}
	D_\alpha w_{\bi,\bj} = \frac{1}{\ri\omega} \frac{ \tau_1^\alpha\rho_1^\alpha w_{\bi,\bj} - w_{\bi,\bj} }{ \tau_1^\alpha\rho_1^\alpha z_{\bi,\bj}^\alpha - z_{\bi,\bj}^\alpha }\in \mathbb{C}^{\bbz^2\times\bbz^2}, \quad \alpha=1,2	\label{eq_scadiscomder}
\end{align}
and scaled second-order discrete complex derivative by
\begin{align}
 	D_{\alpha,\beta} w_{\bi,\bj} = D_{\alpha} \left( D_{\beta} w_{\bi,\bj} \right)\in \mathbb{C}^{\bbz^2\times\bbz^2},\quad \alpha,\beta=1,2.	\label{eq_scadiscomder2}
\end{align}
For \( w_{\bi,\bj},v_{\bi,\bj} \in \mathbb{C}^{\bbz^2\times\bbz^2} \), 
it is easy to prove that \( D_\alpha \) is a linear operator (i.e., \( D_\alpha(w_{\bi,\bj}+v_{\bi,\bj}) = D_\alpha w_{\bi,\bj} + D_\alpha v_{\bi,\bj} \)),  and 
\begin{align}
	\tau_k^\alpha (D_\beta w_{\bi,\bj})=D_\beta( \tau_k^\alpha w_{\bi,\bj} ),\quad \rho_k^\alpha (D_\beta w_{\bi,\bj}) = D_\beta ( \rho_k^\alpha w_{\bi,\bj} ),  \quad \alpha,\beta \in \{1,2\}\text{ and } k\in\bbz. 
\end{align}

Next, we give some propositions on the shift operators and discrete complex derivatives.
\begin{proposition} \label{th_da12}
 	Assume that $w_{\bi,\bj}\in\mathbb{C}^{\bbz^2\times\bbz^2}$ is discrete-holomorphic on \( (\Lambda, z^\alpha) \) for each fixed \( \alpha\in\{1,2\} \), then for \(\beta\in\{1,2\} \) and $\beta\neq\alpha$, it holds $D_{\alpha,\beta} w_{\bi,\bj} = D_{\beta,\alpha} w_{\bi,\bj}$, and moreover,
 	\begin{align}
 		D_{\alpha,\beta} w_{\bi,\bj} = \frac{1}{\ri\omega}  \frac{ \rho_1^{\alpha} (D_{\beta} w_{\bi,\bj})  - \tau_1^{\alpha} (D_{\beta} w_{\bi,\bj})}{ \rho_1^{\alpha} z_{\bi,\bj}^{\alpha} - \tau_1^{\alpha} z_{\bi,\bj}^{\alpha} }, \label{eq_dabdh}
	\end{align}
	which also indicates that  \( D_{\beta} w_{\bi,\bj}  \) is discrete-holomorphic on \( (\Lambda, z^{\alpha}) \).
\end{proposition}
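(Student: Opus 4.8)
The plan is to reduce the entire statement to a single structural fact: that $D_\beta w_{\bi,\bj}$ is itself discrete-holomorphic on $(\Lambda,z^\alpha)$ whenever $\beta\neq\alpha$. Once this is established, identity \eqref{eq_dabdh} is immediate, because its right-hand side is nothing but the anti-diagonal form of the discrete Cauchy--Riemann quotient \eqref{eq_discomderw} applied to $v:=D_\beta w_{\bi,\bj}$, and this anti-diagonal form coincides with the main-diagonal form $D_\alpha(D_\beta w_{\bi,\bj})=D_{\alpha,\beta}w_{\bi,\bj}$ precisely when $v$ is holomorphic on $(\Lambda,z^\alpha)$. Two elementary observations drive the argument. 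First, the linearity of $D_\beta$ together with its commutation with the shifts $\tau_1^\alpha$ and $\rho_1^\alpha$, recorded just above the proposition, lets me move $D_\beta$ freely past any combination of $\alpha$-shifts. Second, because $\alpha\neq\beta$, the structure field $z^\alpha_{\bi,\bj}=z^\alpha_{i_\alpha,j_\alpha}$ depends only on the $\alpha$-components, hence is invariant under the $\beta$-shifts $\tau_1^\beta,\rho_1^\beta$; consequently every difference of $z^\alpha$-values, such as $\tau_1^\alpha\rho_1^\alpha z^\alpha-z^\alpha$ or $\rho_1^\alpha z^\alpha-\tau_1^\alpha z^\alpha$, is a $\beta$-shift-invariant scalar field, so division by it commutes with $D_\beta$.

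To verify the holomorphicity of $D_\beta w_{\bi,\bj}$ on $(\Lambda,z^\alpha)$, I would compute both sides of the Cauchy--Riemann identity \eqref{eq_discomderw} written for $v=D_\beta w_{\bi,\bj}$. Starting from the main-diagonal quotient, I use the commutation relations to write $\tau_1^\alpha\rho_1^\alpha(D_\beta w)-D_\beta w=D_\beta(\tau_1^\alpha\rho_1^\alpha w-w)$ and then pull the $\beta$-invariant denominator $\tau_1^\alpha\rho_1^\alpha z^\alpha-z^\alpha$ inside $D_\beta$, obtaining $D_\beta\!\left(\tfrac{\tau_1^\alpha\rho_1^\alpha w-w}{\tau_1^\alpha\rho_1^\alpha z^\alpha-z^\alpha}\right)=\ri\omega\,D_\beta(D_\alpha w)$, where the last equality is just the definition of $D_\alpha$. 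Carrying out the same manipulation on the anti-diagonal quotient yields $D_\beta\!\left(\tfrac{\rho_1^\alpha w-\tau_1^\alpha w}{\rho_1^\alpha z^\alpha-\tau_1^\alpha z^\alpha}\right)$; here the hypothesis that $w_{\bi,\bj}$ is discrete-holomorphic on $(\Lambda,z^\alpha)$ forces the argument of $D_\beta$ to equal $\ri\omega D_\alpha w$ as well. Thus the two quotients coincide, proving $D_\beta w_{\bi,\bj}$ is holomorphic on $(\Lambda,z^\alpha)$ and establishing \eqref{eq_dabdh}.

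The symmetry $D_{\alpha,\beta}w=D_{\beta,\alpha}w$ then falls out of the very same computation: the common value just obtained is $D_\alpha(D_\beta w)=D_{\alpha,\beta}w$ by the definition of the scaled derivative, while the intermediate expression reads $\tfrac{1}{\ri\omega}D_\beta(\ri\omega D_\alpha w)=D_\beta(D_\alpha w)=D_{\beta,\alpha}w$, so the two agree. I expect the only delicate point to be the bookkeeping behind the second observation, namely carefully justifying that the $z^\alpha$-difference factors are genuinely annihilated by $\tau_1^\beta,\rho_1^\beta$ and may therefore be carried in and out of the ratio operator $D_\beta$. This invariance is exactly where the hypothesis $\beta\neq\alpha$ enters, and it is what makes $D_\alpha$ and $D_\beta$ behave like commuting partial derivatives along independent coordinate directions; everything else is linearity and the equivalence of the two forms of the discrete Cauchy--Riemann relation.
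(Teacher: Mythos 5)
Your proposal is correct and follows essentially the same route as the paper: the two computations you describe --- pulling $D_\beta$ through the main-diagonal quotient for $D_\beta w_{\bi,\bj}$ using linearity, commutation with the $\alpha$-shifts, and the $\beta$-shift-invariance of the $z^\alpha$-difference denominators, and then treating the anti-diagonal quotient the same way with the Cauchy--Riemann relation \eqref{eq_discomderw} for $w_{\bi,\bj}$ --- are exactly the paper's two displayed chains of equalities. The only difference is organizational: you establish the discrete holomorphicity of $D_\beta w_{\bi,\bj}$ on $(\Lambda,z^\alpha)$ first and read off both $D_{\alpha,\beta}w_{\bi,\bj}=D_{\beta,\alpha}w_{\bi,\bj}$ and \eqref{eq_dabdh} as corollaries, whereas the paper proves the symmetry first (needing no holomorphicity) and obtains the holomorphicity of $D_\beta w_{\bi,\bj}$ as the concluding remark.
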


\begin{proof}
	It follows from \eqref{eq_scadiscomder} and \eqref{eq_scadiscomder2} that 
	\begin{align*}
 		D_{\alpha,\beta} w_{\bi,\bj} =&~ D_{\alpha} \left( D_{\beta} w_{\bi,\bj} \right) = \frac{1}{\ri\omega} \frac{ \tau_1^{\alpha}\rho_1^{\alpha} (D_{\beta} w_{\bi,\bj}) - (D_{\beta} w_{\bi,\bj}) }{ \tau_1^{\alpha}\rho_1^{\alpha} z_{\bi,\bj}^{\alpha} - z_{\bi,\bj}^{\alpha} } \\
 		=&~ \frac{1}{(\ri\omega)^2} \frac{ \tau_1^{\alpha}\rho_1^{\alpha} (\tau_1^{\beta}\rho_1^{\beta} w_{\bi,\bj} - w_{\bi,\bj}) - (\tau_1^{\beta}\rho_1^{\beta} w_{\bi,\bj} - w_{\bi,\bj}) }{ (\tau_1^{\alpha}\rho_1^{\alpha} z_{\bi,\bj}^{\alpha} - z_{\bi,\bj}^{\alpha}) (\tau_1^{\beta}\rho_1^{\beta} z_{\bi,\bj}^{\beta} - z_{\bi,\bj}^{\beta}) } \\
 		=&~ \frac{1}{(\ri\omega)^2} \frac{ \tau_1^{\beta}\rho_1^{\beta} (\tau_1^{\alpha}\rho_1^{\alpha} w_{\bi,\bj} - w_{\bi,\bj}) - (\tau_1^{\alpha}\rho_1^{\alpha} w_{\bi,\bj} - w_{\bi,\bj}) }{ (\tau_1^{\alpha}\rho_1^{\alpha} z_{\bi,\bj}^{\alpha} - z_{\bi,\bj}^{\alpha}) (\tau_1^{\beta}\rho_1^{\beta} z_{\bi,\bj}^{\beta} - z_{\bi,\bj}^{\beta}) } \\
 		=&~ \frac{1}{\ri\omega} \frac{ \tau_1^{\beta}\rho_1^{\beta} (D_{\alpha} w_{\bi,\bj}) - (D_{\alpha} w_{\bi,\bj}) }{ (\tau_1^{\beta}\rho_1^{\beta} z_{\bi,\bj}^{\beta} - z_{\bi,\bj}^{\beta}) } \\
 		=&~ D_{\beta} \left( D_{\alpha} w_{\bi,\bj} \right) = D_{\beta,\alpha} w_{\bi,\bj}.
	\end{align*}
	Since $w_{\bi,\bj}$ is discrete-holomorphic for each \( \alpha \), it follows from \eqref{eq_discomderw} that 
	\begin{align*}
 		D_{\alpha,\beta} w_{\bi,\bj} =&~ D_{\beta} \left( D_{\alpha} w_{\bi,\bj} \right) = \frac{1}{\ri\omega} \frac{ \tau_1^{\beta}\rho_1^{\beta} (D_{\alpha} w_{\bi,\bj}) -  (D_{\alpha} w_{\bi,\bj}) }{ \tau_1^{\beta} \rho_1^{\beta} z_{\bi,\bj}^{\beta} - z_{\bi,\bj}^{\beta} } \\
 		=&~ \frac{1}{(\ri\omega)^2} \frac{ \tau_1^{\beta}\rho_1^{\beta} ( \rho_1^{\alpha} w_{\bi,\bj} - \tau_1^{\alpha} w_{\bi,\bj} ) -  ( \rho_1^{\alpha} w_{\bi,\bj} - \tau_1^{\alpha} w_{\bi,\bj} ) }{ (\tau_1^{\beta} \rho_1^{\beta} z_{\bi,\bj}^{\beta} - z_{\bi,\bj}^{\beta}) ( \rho_1^{\alpha} z_{\bi,\bj}^{\alpha} - \tau_1^{\alpha} z_{\bi,\bj}^{\alpha} ) } \\
 		=&~ \frac{1}{(\ri\omega)^2} \frac{ \rho_1^{\alpha} ( \tau_1^{\beta}\rho_1^{\beta} w_{\bi,\bj} - w_{\bi,\bj} ) -  \tau_1^{\alpha} ( \tau_1^{\beta}\rho_1^{\beta} w_{\bi,\bj} - w_{\bi,\bj} ) }{ (\tau_1^{\beta} \rho_1^{\beta} z_{\bi,\bj}^{\beta} - z_{\bi,\bj}^{\beta}) ( \rho_1^{\alpha} z_{\bi,\bj}^{\alpha} - \tau_1^{\alpha} z_{\bi,\bj}^{\alpha}) } \\
 		=&~ \frac{1}{\ri\omega}  \frac{ \rho_1^{\alpha} (D_{\beta} w_{\bi,\bj})  - \tau_1^{\alpha} (D_{\beta} w_{\bi,\bj})}{ \rho_1^{\alpha} z_{\bi,\bj}^{\alpha} - \tau_1^{\alpha} z_{\bi,\bj}^{\alpha} }.
	\end{align*}
	This completes the proof.
\end{proof}

Next, we consider the relationship between \( \tau_{-k}^\alpha,\tau_{k}^\alpha \) and \( \rho_{-k}^\alpha,\rho_{k}^\alpha \).
\begin{proposition} \label{pro_tauw}
	Assume that $w_{\bi,\bj}\in\mathbb{C}^{\bbz^2\times\bbz^2}$ is discrete-holomorphic on \( (\Lambda, z^\alpha) \) for each fixed \( \alpha \). Then we have that for \( k\in\bbz^+ \)
	\begin{align}
 		\tau_{-k}^\alpha w_{\bi,\bj} =&~ \rho_{-k}^\alpha w_{\bi,\bj} - h \sum_{l=1}^{k} \rho_{-l}^\alpha  \left(  \sigma_{\bj}^\alpha \left(	\tau_{-k+l-1}^\alpha D_\alpha  w_{\bi,\bj} \right) \right), \label{eq_taukw}\\
 		\tau_{k}^\alpha w_{\bi,\bj} =&~ \rho_{k}^\alpha w_{\bi,\bj} + h \sum_{l=1}^k \rho_{l-1}^\alpha  \left(  \sigma_{\bj}^\alpha \left( \tau_{k-l}^\alpha D_\alpha  w_{\bi,\bj} \right) \right), \label{eq_taukw2}
	\end{align}
	and when $\alpha\neq\beta$,
	\begin{align}
 		\tau_{-k}^{\alpha} D_{\beta}  w_{\bi,\bj} =&~ \rho_{-k}^{\alpha}  \left( D_{\beta} w_{\bi,\bj} \right) - h \sum_{l=1}^k   \rho_{-l}^{\alpha}  \left( \sigma_{\bj}^{\alpha} \left( \tau_{-k+l-1}^{\alpha} D_{\alpha,\beta}  w_{\bi,\bj} \right) \right), \label{eq_taukDw}\\
 		\tau_{k}^{\alpha} D_{\beta} w_{\bi,\bj} =&~ \rho_{k}^{\alpha} \left( D_{\beta}  w_{\bi,\bj} \right) + h \sum_{l=1}^k   \rho_{l-1}^{\alpha}  \left( \sigma_{\bj}^{\alpha} \left( \tau_{k-l}^{\alpha} D_{\alpha,\beta}  w_{\bi,\bj} \right) \right). \label{eq_taukDw2}
	\end{align}
	Here, $\sigma^\alpha_\bj:=\sigma_{j_\alpha}^\alpha$ and $\sigma_j^\alpha$ is defined in \eqref{eq_defzij}.
\end{proposition}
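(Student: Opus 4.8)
The plan is to reduce everything to a single one-step identity and then bootstrap it by induction on $k$. First I would read off from the explicit structure \eqref{eq_defzij} the three elementary increments $\tau_1^\alpha z_{\bi,\bj}^\alpha - z_{\bi,\bj}^\alpha = h$, $\rho_1^\alpha z_{\bi,\bj}^\alpha - z_{\bi,\bj}^\alpha = h(1+\ri\sigma_{j_\alpha}^\alpha/\omega)$, and in particular $\rho_1^\alpha z_{\bi,\bj}^\alpha - \tau_1^\alpha z_{\bi,\bj}^\alpha = \ri h\sigma_{j_\alpha}^\alpha/\omega$. Substituting these into the discrete Cauchy--Riemann equation \eqref{eq_discomderw} and using the definition \eqref{eq_scadiscomder} of $D_\alpha$, the holomorphicity of $w_{\bi,\bj}$ on $(\Lambda,z^\alpha)$ collapses to the one-step relation
\[
\tau_1^\alpha w_{\bi,\bj} = \rho_1^\alpha w_{\bi,\bj} + h\,\sigma_{\bj}^\alpha D_\alpha w_{\bi,\bj},
\]
which is exactly the $k=1$ instance of \eqref{eq_taukw2}; solving the same relation for the backward shift, equivalently applying $\tau_{-1}^\alpha\rho_{-1}^\alpha$ to it, yields $\tau_{-1}^\alpha w_{\bi,\bj} = \rho_{-1}^\alpha w_{\bi,\bj} - h\,\rho_{-1}^\alpha(\sigma_\bj^\alpha(\tau_{-1}^\alpha D_\alpha w_{\bi,\bj}))$, the $k=1$ instance of \eqref{eq_taukw}.

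For the inductive step of \eqref{eq_taukw2} I would write $\tau_k^\alpha = \tau_1^\alpha\tau_{k-1}^\alpha$, apply $\tau_1^\alpha$ to the formula assumed at level $k-1$, and then push $\tau_1^\alpha$ through each factor. The bookkeeping rests on three commutations that I regard as routine: $\tau_1^\alpha$ commutes with every $\rho_m^\alpha$ (they act on disjoint index blocks), it commutes with the multiplier $\sigma_\bj^\alpha=\sigma_{j_\alpha}^\alpha$ (which depends only on the $j$-index), and it commutes with $D_\alpha$. The leading term $\tau_1^\alpha\rho_{k-1}^\alpha w_{\bi,\bj}=\rho_{k-1}^\alpha\tau_1^\alpha w_{\bi,\bj}$ is then rewritten by the one-step relation, producing $\rho_k^\alpha w_{\bi,\bj}$ together with the extra summand $h\,\rho_{k-1}^\alpha(\sigma_\bj^\alpha D_\alpha w_{\bi,\bj})$; this is precisely the missing $l=k$ term of the target sum, while the remaining sum reproduces the $l=1,\dots,k-1$ terms after re-indexing $\tau_1^\alpha\tau_{k-1-l}^\alpha=\tau_{k-l}^\alpha$. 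The negative-shift identity \eqref{eq_taukw} is handled the same way with $\tau_{-k}^\alpha=\tau_{-1}^\alpha\tau_{-(k-1)}^\alpha$ and the backward one-step relation, the only new point being to track the sign coming from solving the relation in reverse.

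Finally, the two identities \eqref{eq_taukDw}--\eqref{eq_taukDw2} for $\alpha\neq\beta$ require no fresh computation: Proposition \ref{th_da12} guarantees that $D_\beta w_{\bi,\bj}$ is itself discrete-holomorphic on $(\Lambda,z^\alpha)$, so I would simply invoke \eqref{eq_taukw} and \eqref{eq_taukw2} with $w_{\bi,\bj}$ replaced by $D_\beta w_{\bi,\bj}$, and then rewrite $D_\alpha(D_\beta w_{\bi,\bj})=D_{\alpha,\beta}w_{\bi,\bj}$ via \eqref{eq_scadiscomder2}. The main obstacle is not conceptual but clerical: verifying that the newly created term in each induction step matches the $l=k$ summand exactly, i.e.\ that the nested operators $\rho_{l-1}^\alpha(\sigma_\bj^\alpha(\tau_{k-l}^\alpha D_\alpha(\cdot)))$ line up with the correct shift indices once the one-step relation is inserted; keeping the sign and the $\tau/\rho$ index shifts consistent in the backward cases \eqref{eq_taukw},\eqref{eq_taukDw} is where the care is needed.
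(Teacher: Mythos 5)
Your proposal is correct, and at its core it uses the same mechanism as the paper --- peeling off one discrete Cauchy--Riemann step at a time for \eqref{eq_taukw}--\eqref{eq_taukw2}, and then obtaining \eqref{eq_taukDw}--\eqref{eq_taukDw2} from Proposition~\ref{th_da12} --- but your decomposition is organized differently, in a way worth noting. The paper never isolates your one-step identity \( \tau_1^\alpha w_{\bi,\bj} = \rho_1^\alpha w_{\bi,\bj} + h\,\sigma_{\bj}^\alpha D_\alpha w_{\bi,\bj} \); instead it evaluates \( D_\alpha(\tau_{-k}^\alpha\rho_{-1}^\alpha w_{\bi,\bj}) \) via the cross-diagonal form of the Cauchy--Riemann equation at \emph{shifted} faces (producing the denominator \( \ri\frac{h}{\omega}\sigma_{j_\alpha-1}^\alpha \)) and telescopes along the staircase \( \tau_{-k}^\alpha \to \tau_{-k+1}^\alpha\rho_{-1}^\alpha \to \cdots \to \rho_{-k}^\alpha \), which obliges it to invoke both interchange rules \( D_\alpha(\tau_r^\alpha\,\cdot)=\tau_r^\alpha(D_\alpha\,\cdot) \) and \( D_\alpha(\rho_r^\alpha\,\cdot)=\rho_r^\alpha(D_\alpha\,\cdot) \); the latter is the delicate one, since the denominator in \eqref{eq_scadiscomder} depends on \( j_\alpha \), so it is only valid under the covariant reading in which the shift acts on the pair \( (w,z^\alpha) \) simultaneously. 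Your route always applies the Cauchy--Riemann relation at the base face and transports shifts by composition, so the only interchanges you need --- \( \tau_{\pm1}^\alpha \) with \( \rho_m^\alpha \), with the multiplier \( \sigma_{\bj}^\alpha \), and with translates of the already-formed function \( D_\alpha w \) --- are unconditionally valid; indeed, you never actually need to push \( \tau_1^\alpha \) through \( D_\alpha \) (that commutation is true, since the denominator is independent of \( i_\alpha \), but every occurrence of \( D_\alpha w \) in your induction is just a translate of a fixed function), so your bookkeeping sidesteps the one fragile point of the paper's argument. For the mixed identities, the paper applies \( D_\beta \) to the proven identity \eqref{eq_taukw} and commutes it through the sum, while you substitute \( D_\beta w \) directly into \eqref{eq_taukw}--\eqref{eq_taukw2} as a function that Proposition~\ref{th_da12} certifies to be discrete-holomorphic on \( (\Lambda,z^\alpha) \); these are equivalent, with yours marginally cleaner. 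The only point you share with the paper and leave implicit is the degenerate faces where \( \sigma_{j_\alpha}^\alpha=0 \): there the cross-ratio in \eqref{eq_defdiscomder} has vanishing denominator and holomorphicity must be read as \( \rho_1^\alpha w = \tau_1^\alpha w \), under which convention your one-step identity remains valid.
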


\begin{proof}
	We only prove the identities \eqref{eq_taukw} and \eqref{eq_taukDw}. The proof of \eqref{eq_taukw2} and \eqref{eq_taukDw2} is analogous.
	
	It follows from the definition of $z^\alpha$ \eqref{eq_defzij}, and \eqref{eq_discomderw}--\eqref{eq_scadiscomder} that 
	\begin{align*}
 		D_\alpha \left( \tau_{-k}^\alpha \rho_{-1}^\alpha w_{\bi,\bj} \right) =&~ \frac{1}{\ri\omega} \frac{ \rho_1^\alpha ( \tau_{-k}^\alpha \rho_{-1}^\alpha w_{\bi,\bj} ) - \tau_1^\alpha ( \tau_{-k}^\alpha \rho_{-1}^\alpha w_{\bi,\bj} ) }{ \rho_1^\alpha ( \tau_{-k}^\alpha \rho_{-1}^\alpha z_{\bi,\bj}^\alpha ) - \tau_1^\alpha ( \tau_{-k}^\alpha \rho_{-1}^\alpha z_{\bi,\bj}^\alpha ) } \\
 		=&~ \frac{1}{\ri\omega} \frac{ \tau_{-k}^\alpha w_{\bi,\bj} - \tau_{-k+1}^\alpha \rho_{-1}^\alpha w_{\bi,\bj} }{ \ri \frac{h}{\omega} \sigma_{j_\alpha-1}^\alpha } \\
 		=&~ \frac{ \tau_{-k}^\alpha w_{\bi,\bj} - \tau_{-k+1}^\alpha \rho_{-1}^\alpha w_{\bi,\bj} }{ - h \rho_{-1}^\alpha \sigma_{\bj}^\alpha },
	\end{align*}
	which implies by induction that
	\begin{align*}
 		\tau_{-k}^\alpha w_{\bi,\bj} =&~ \tau_{-k+1}^\alpha \rho_{-1}^\alpha w_{\bi,\bj} - h \rho_{-1}^\alpha \sigma_{\bj}^\alpha \cdot D_\alpha \left( \tau_{-k}^\alpha \rho_{-1}^\alpha w_{\bi,\bj} \right) \\
 		=&~ \tau_{-k}^\alpha \left( \tau_1^\alpha \rho_{-1}^\alpha w_{\bi,\bj} \right) - h \rho_{-1}^\alpha  \left( \sigma_{\bj}^\alpha \left( \tau_{-k}^\alpha D_\alpha w_{\bi,\bj} \right) \right) \\
 		=&~  \tau_{-k}^\alpha \left( \tau_2^\alpha \rho_{-2}^\alpha w_{\bi,\bj} \right) - h \rho_{-1}^\alpha  \left( \sigma_{\bj}^\alpha \left( \tau_{-k}^\alpha D_\alpha \left( \tau_1^\alpha \rho_{-1}^\alpha w_{\bi,\bj} \right) \right) \right)  - h \rho_{-1}^\alpha  \left( \sigma_{\bj}^\alpha \left( \tau_{-k}^\alpha D_\alpha w_{\bi,\bj} \right) \right) \\
 		=&~  \tau_{-k}^\alpha \left( \tau_2^\alpha \rho_{-2}^\alpha w_{\bi,\bj} \right) - h \rho_{-2}^\alpha  \left( \sigma_{\bj}^\alpha \left( \tau_{-k+1}^\alpha D_\alpha w_{\bi,\bj}  \right) \right)  - h \rho_{-1}^\alpha \left( \sigma_{\bj}^\alpha \left( \tau_{-k}^\alpha  D_\alpha w_{\bi,\bj} \right) \right)  \\
 		=&~ \cdots \\
 		=&~ \tau_0^\alpha \rho_{-k}^\alpha w_{\bi,\bj} - h \sum_{l=1}^k \rho_{-l}^\alpha  \left(  \sigma_{\bj}^\alpha\left( \tau_{-k+l-1}^\alpha D_\alpha  w_{\bi,\bj} \right) \right).
	\end{align*}
	This proves the identity \eqref{eq_taukw}. Note that in the above proof we have used the fact that $D_\alpha (\tau_r^\alpha \cdot) = \tau_r^\alpha (D_\alpha\cdot)$ and $D_\alpha (\rho_r^\alpha \cdot) = \rho_r^\alpha (D_\alpha\cdot)$ for $r\in\bbz$.
	
	Similar to the derivation of the identity~\eqref{eq_taukw}, \eqref{eq_taukDw} follows from the fact that $D_{\beta} w_{\bi,\bj}$ is discrete-holomorphic with respect to \( z^{\alpha} \) by Th.~\ref{th_da12}
	\begin{align*}
 		\tau_{-k}^{\alpha} D_{\beta} w_{\bi,\bj} = D_{\beta} ( \tau_{-k}^{\alpha} w_{\bi,\bj} )  =&~ D_\beta\left( \rho_{-k}^{\alpha} w_{\bi,\bj} - h \sum_{l=1}^k \rho_{-l}^{\alpha} \left(  \sigma_{\bj}^{\alpha}\left( \tau_{-k+l-1}^{\alpha}  D_{\alpha} w_{\bi,\bj} \right) \right) \right)\\
 		=&~  \rho_{-k}^{\alpha}  D_{\beta} w_{\bi,\bj}  - h \sum_{l=1}^k \rho_{-l}^{\alpha}  \left(  \sigma_{\bj}^{\alpha}\left( \tau_{-k+l-1}^{\alpha} D_{\alpha,\beta} w_{\bi,\bj} \right) \right).
	\end{align*}
	The proof is completed. 
\end{proof}

Using the above proposition, we further convert \( \tau_{k_1}^1 \tau_{k_2}^2 w_{\bi,\bj}  \) into \( \rho_{k_1}^1 \rho_{k_2}^2 w_{\bi,\bj} \) for all \( k_1,k_2\in\bbz\setminus\{0\} \).
%For simplicity of presentation, define a operator \( \hat{\alpha}=3-\alpha \) swapping two number in $\{1,2\}$, that is $\hat{1}=2$ and $\hat{2}=1$.
\begin{proposition} \label{pro_tautauu}
 	Assume that $w_{\bi,\bj}\in\mathbb{C}^{\bbz^2\times\bbz^2}$ is discrete-holomorphic on \( (\Lambda, z^\alpha) \) for each \( \alpha \). For $k_1,k_2\in \bbz^+$, it holds 
 	\begin{align}
 		\tau_{-k_1}^1 \tau_{-k_2}^2 w_{\bi,\bj} =&~ \rho_{-k_1}^1 \rho_{-k_2}^2 w_{\bi,\bj} - h \sum_{\alpha\neq\beta} \sum_{l=1}^{k_\alpha}  \rho_{-l}^\alpha \rho_{-k_{\beta}}^{\beta} \left( \sigma_{\bj}^\alpha \left( \tau_{-k_\alpha+l-1}^\alpha D_\alpha w_{\bi,\bj} \right) \right) \notag\\
 		& + h^2 \sum_{l=1}^{k_1} \sum_{m=1}^{k_2}  \rho_{-l}^1 \rho_{-m}^2 \left( \sigma_{\bj}^1 \sigma_{\bj}^2 \left( \tau_{-k_1+l-1}^1 \tau_{-k_2+m-1}^2 D_{1,2} w_{\bi,\bj} \right) \right) \label{eq_tmkmk1}\\
 		\tau_{k_1}^1 \tau_{k_2}^2 w_{\bi,\bj} =&~ \rho_{k_1}^1 \rho_{k_2}^2 w_{\bi,\bj} + h \sum_{\alpha\neq\beta} \sum_{l=1}^{k_\alpha}  \rho_{l-1}^\alpha \rho_{k_{\beta}}^{\beta} \left( \sigma_{\bj}^\alpha \left( \tau_{k_\alpha-l}^\alpha D_\alpha w_{\bi,\bj} \right) \right) \notag\\
 		& + h^2 \sum_{l=1}^{k_1} \sum_{m=1}^{k_2}  \rho_{l-1}^1 \rho_{m-1}^2 \left( \sigma_{\bj}^1 \sigma_{\bj}^2 \left( \tau_{k_1-l}^1 \tau_{k_2-m}^2 D_{1,2} w_{\bi,\bj} \right) \right). \label{eq_tmkmk2}
	\end{align}
	When $\alpha\neq\beta$, and $k_{\alpha},k_{\beta}\in \bbz^+$,
	\begin{align}
 		\tau_{-k_{\alpha}}^{\alpha} \tau_{k_{\beta}}^{\beta} w_{\bi,\bj} =&~ \rho_{-k_{\alpha}}^{\alpha} \rho_{k_{\beta}}^{\beta} w_{\bi,\bj} - h \sum_{l=1}^{k_{\alpha}} \rho_{-l}^{\alpha} \rho_{k_{\beta}}^{\beta} \left( \sigma_{\bj}^{\alpha} \left( \tau_{-k_{\alpha}+l-1}^{\alpha} D_{\alpha} w_{\bi,\bj} \right) \right) \notag\\
 		& + h \sum_{l=1}^{k_{\beta}} \rho_{l-1}^{\beta} \rho_{-k_{\alpha}}^{\alpha} \left( \sigma_{\bj}^{\beta} \left( \tau_{k_{\beta}-l}^{\beta} D_{\beta} w_{\bi,\bj} \right) \right) \notag\\
 		& - h^2 \sum_{l=1}^{k_{\alpha}} \sum_{m=1}^{k_{\beta}} \rho_{-l}^\alpha \rho_{m-1}^\beta \left( \sigma_{\bj}^\alpha \sigma_{\bj}^\beta \left( \tau_{-k_\alpha+l-1}^\alpha \tau_{k_\beta-m}^\beta D_{1,2} w_{\bi,\bj} \right) \right). \label{eq_tmkmk3}
	\end{align}
\end{proposition}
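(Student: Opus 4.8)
The plan is to obtain the two-index identities by peeling off the two directions one at a time, each peel being a single application of the one-index conversion formulas in Proposition~\ref{pro_tauw}. Throughout I will rely on the elementary commutation relations that follow directly from the definitions: a shift $\tau_k^\alpha$ commutes with $\tau_m^\beta$ for $\alpha\neq\beta$, with every $\rho_m^\gamma$, and with every $D_\gamma$; a shift $\rho_k^\alpha$ commutes with $\rho_m^\beta$ for $\alpha\neq\beta$ and with every $D_\gamma$; and the multiplier $\sigma_\bj^\alpha=\sigma_{j_\alpha}^\alpha$, depending only on $j_\alpha$, commutes with $\tau_m^\gamma$ for all $\gamma$ and with $\rho_m^\beta$ for $\beta\neq\alpha$, but \emph{not} with $\rho_m^\alpha$. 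Finally, Proposition~\ref{th_da12} supplies the symmetry $D_{1,2}w_{\bi,\bj}=D_{2,1}w_{\bi,\bj}$ and the fact that $D_\beta w_{\bi,\bj}$ is discrete-holomorphic on $(\Lambda,z^\alpha)$ for $\alpha\neq\beta$; the former makes the order in which the two directions are processed immaterial, and the latter licenses the use of \eqref{eq_taukDw}--\eqref{eq_taukDw2} on the already-differentiated terms.

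I would carry out the argument in detail only for \eqref{eq_tmkmk1}, since \eqref{eq_tmkmk2} and \eqref{eq_tmkmk3} have identical structure with the positive-shift formulas \eqref{eq_taukw2},\eqref{eq_taukDw2} inserted in the appropriate slots. First I regard $\tau_{-k_1}^1\tau_{-k_2}^2w_{\bi,\bj}$ as $\tau_{-k_1}^1$ acting on $\tau_{-k_2}^2 w_{\bi,\bj}$ and apply \eqref{eq_taukw} with $\alpha=1$; because $D_1$ commutes with $\tau_{-k_2}^2$, this yields a leading term $\rho_{-k_1}^1\tau_{-k_2}^2 w_{\bi,\bj}$ together with a direction-$1$ correction built from $\tau_{-k_1+l-1}^1\tau_{-k_2}^2 D_1 w_{\bi,\bj}$. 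I then eliminate the second direction separately in each piece: in the leading term I apply \eqref{eq_taukw} with $\alpha=2$ to $\tau_{-k_2}^2 w_{\bi,\bj}$, while in the correction I apply \eqref{eq_taukDw} with $\alpha=2,\beta=1$ to $\tau_{-k_2}^2 D_1 w_{\bi,\bj}$, which is legitimate precisely because $D_1 w_{\bi,\bj}$ is discrete-holomorphic on $(\Lambda,z^2)$.

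Collecting the output then sorts into four contributions matching the right-hand side of \eqref{eq_tmkmk1}. The purely undifferentiated piece $\rho_{-k_1}^1\rho_{-k_2}^2 w_{\bi,\bj}$ comes from the leading part of both steps. The correction of the second step applied to the leading term gives, after commuting $\rho_{-k_1}^1$ past the direction-$2$ operators, the $(\alpha=2,\beta=1)$ summand of $\sum_{\alpha\neq\beta}$; the leading part of the direction-$2$ conversion inside the direction-$1$ correction gives, after commuting $\sigma_\bj^1$ and $\tau_{-k_1+l-1}^1$ past $\rho_{-k_2}^2$, the $(\alpha=1,\beta=2)$ summand. The remaining piece, where the direction-$2$ correction of \eqref{eq_taukDw} is fed into the direction-$1$ correction, carries both $\sigma_\bj^1$ and $\sigma_\bj^2$ together with $D_{2,1}w_{\bi,\bj}=D_{1,2}w_{\bi,\bj}$, and is exactly the $h^2$ double sum.

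The step I expect to demand the most care is this final collecting and commuting of operators: one must push the outer direction-$1$ factors $\rho_{-l}^1$, $\sigma_\bj^1$, $\tau_{-k_1+l-1}^1$ past the direction-$2$ operators produced by the second application of Proposition~\ref{pro_tauw} so that every term acquires exactly the index pattern $\rho_{-l}^\alpha\rho_{-k_\beta}^\beta(\sigma_\bj^\alpha(\cdots))$ or $\rho_{-l}^1\rho_{-m}^2(\sigma_\bj^1\sigma_\bj^2(\cdots))$ written in \eqref{eq_tmkmk1}. The subtle point is that $\sigma_\bj^1$ slides freely through $\rho^2$ and through $\sigma_\bj^2$, whereas $\sigma_\bj^2$ must not be commuted through $\rho^2$; keeping this straight is what pins down the precise nesting of the $\sigma$-factors. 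For \eqref{eq_tmkmk2} the same scheme with the $+$-shift formulas reproduces the stated $+h$ and $+h^2$ signs, and for \eqref{eq_tmkmk3} using the negative-shift formula in direction $\alpha$ and the positive-shift formula in direction $\beta$ produces the mixed $-h,\ +h,\ -h^2$ pattern.
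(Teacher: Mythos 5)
Your proposal is correct and takes essentially the same route as the paper's own proof: the paper likewise proves only \eqref{eq_tmkmk1} (declaring the other identities analogous), by iterating the conversion formulas of Proposition~\ref{pro_tauw} once per direction, invoking Proposition~\ref{th_da12} both to license applying \eqref{eq_taukDw} to the differentiated correction term and to identify $D_{2,1}w_{\bi,\bj}=D_{1,2}w_{\bi,\bj}$, and then commuting cross-direction operators to collect the four contributions. The only, immaterial, difference is the order of peeling: the paper expands the direction-$2$ shift first and then converts the direction-$1$ shifts, whereas you peel direction $1$ first — exactly the symmetry your appeal to $D_{1,2}=D_{2,1}$ justifies.
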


\begin{proof}
	Here we only prove the identity \eqref{eq_tmkmk1}. The proofs for other identities are analogous. It is clear that $\tau_{-k_2}^2 w_{\bi,\bj}$ is discrete-holomorphic with respect to $(i_1,j_1)$. By \eqref{eq_taukw} and \eqref{eq_taukDw}, it has 
	\begin{align*}
 		\tau_{-k_1}^1 \tau_{-k_2}^2 w_{\bi,\bj} =&~ \tau_{-k_1}^1 \left( \rho_{-k_2}^2 w_{\bi,\bj} - h \sum_{l=1}^{k_2} \rho_{-l}^2  \left(  \sigma_{\bj}^2 \left(	\tau_{-k_2+l-1}^2 D_2  w_{\bi,\bj} \right) \right) \right) \\
 		=&~ \tau_{-k_1}^1 \rho_{-k_2}^2 w_{\bi,\bj}  - h \sum_{l=1}^{k_2} \rho_{-l}^2  \left(  \sigma_{\bj}^2 \tau_{-k_2+l-1}^2 \left( \tau_{-k_1}^1   D_2  w_{\bi,\bj} \right) \right) \notag\\
 		=&~ \rho_{-k_2}^2 \left( \rho_{-k_1}^1 w_{\bi,\bj} - h \sum_{l=1}^{k_1} \rho_{-l}^1 \left( \sigma_{\bj}^1 \left( \tau_{-k_1+l-1}^1 D_1 w_{\bi,\bj} \right) \right) \right) \notag\\
 		& - h \sum_{l=1}^{k_2} \rho_{-l}^2  \left(  \sigma_{\bj}^2 \tau_{-k_2+l-1}^2 \left( \rho_{-k_1}^1 D_2 w_{\bi,\bj} - h \sum_{m=1}^{k_1} \rho_{-m}^1 \left( \sigma_{\bj}^1 \tau_{-k_1+m-1} D_{1,2} w_{\bi,\bj}  \right)  \right) \right) \notag\\
 		=&~ \rho_{-k_1}^1 \rho_{-k_2}^2 w_{\bi,\bj} - h \sum_{l=1}^{k_1} \rho_{-l}^1 \rho_{-k_2}^2 \left( \sigma_{\bj}^1 \left( \tau_{-k_1+l-1}^1 D_1 w_{\bi,\bj} \right) \right) \notag\\
 		& - h \sum_{l=1}^{k_2} \rho_{-k_1}^1 \rho_{-l}^2 \left( \sigma_{\bj}^2 \tau_{-k_2+l-1}^2 D_2 w_{\bi,\bj} \right) \notag\\
 		& + h^2 \sum_{m=1}^{k_1} \sum_{l=1}^{k_2} \rho_{-l}^2 \rho_{-m}^1 \left( \sigma_{\bj}^1 \sigma_{\bj}^2   \tau_{-k_2+l-1}^2   \tau_{-k_1+m-1} D_{1,2} w_{\bi,\bj}  \right).
	\end{align*}
	The proof is completed. 
\end{proof}

The following proposition shows the identities of the shifted discrete derivatives are satisfied.
\begin{proposition} \label{pro_tildepsi}
 	Assume that $w_{\bi,\bj}\in\mathbb{C}^{\bbz^2\times\bbz^2}$ is discrete-holomorphic on \( (\Lambda, z^\alpha) \) for each \( \alpha \). For \( k\in \mathbb{N} \), it holds 
 	\begin{align}
 		\ri \omega \tau_k^\alpha D_\alpha  w_{\bi,\bj} =&~ \frac{ (\rho_{k+2}^\alpha - \rho_k^\alpha) w_{\bi,\bj} }{2h} + \frac{ (\rho_1^\alpha+\rho_0^\alpha) (\sigma_{\bj}^\alpha \tau_{k}^\alpha D_\alpha w_{\bi,\bj}) }{2}  + \sum_{l=1}^k \frac{ (\rho_{l+1}^\alpha - \rho_{l-1}^\alpha) ( \sigma_{\bj}^\alpha \tau_{k-l}^\alpha D_\alpha w_{\bi,\bj} ) }{2} \label{eq_tdw1}
 	\end{align}
 	and
 	\begin{align}
 		\ri \omega \tau_{-k}^\alpha D_\alpha  w_{\bi,\bj} =&~ \frac{ (\rho_{-k+2}^\alpha - \rho_{-k}^\alpha) w_{\bi,\bj} }{2h} + \frac{ (\rho_{-1}^\alpha+\rho_0^\alpha) (\sigma_{\bj}^\alpha \tau_{-k}^\alpha D_\alpha w_{\bi,\bj}) }{2} \notag\\
		&~+ \sum_{l=1}^k \frac{ (\rho_{-l-1}^\alpha - \rho_{-l+1}^\alpha) ( \sigma_{\bj}^\alpha \tau_{-k+l}^\alpha D_\alpha w_{\bi,\bj} ) }{2}.\label{eq_tdw2}
	\end{align}
\end{proposition}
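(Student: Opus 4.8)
The plan is to derive a single $\sigma$-free representation of $\ri\omega D_\alpha w_{\bi,\bj}$ and then transport it under the translation $\tau_{\pm k}^\alpha$, trading the resulting $\tau$-shifts for $\rho$-shifts via Proposition~\ref{pro_tauw}. First I would record the increment of the structure function from \eqref{eq_defzij}, namely $\tau_1^\alpha\rho_1^\alpha z_{\bi,\bj}^\alpha - z_{\bi,\bj}^\alpha = 2h + \ri\frac{h}{\omega}\sigma_{\bj}^\alpha$, so that the definition \eqref{eq_scadiscomder} reads $(2h+\ri\frac{h}{\omega}\sigma_{\bj}^\alpha)\,\ri\omega D_\alpha w_{\bi,\bj} = \tau_1^\alpha\rho_1^\alpha w_{\bi,\bj} - w_{\bi,\bj}$. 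Combining this with the $k=1$ instance of \eqref{eq_taukw2}, which gives $\tau_1^\alpha w_{\bi,\bj} - \rho_1^\alpha w_{\bi,\bj} = h\sigma_{\bj}^\alpha D_\alpha w_{\bi,\bj}$, the two $\sigma$-terms cancel upon addition and I obtain the base identity
\begin{align*}
2h\,\ri\omega D_\alpha w_{\bi,\bj} = \tau_1^\alpha\rho_1^\alpha w_{\bi,\bj} + \tau_1^\alpha w_{\bi,\bj} - \rho_1^\alpha w_{\bi,\bj} - w_{\bi,\bj},
\end{align*}
which is free of any division by $\sigma_{\bj}^\alpha$ and hence valid throughout the lattice, including where the absorbing coefficient vanishes.

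Next I would apply $\tau_k^\alpha$ to this identity and use the commutation relations $\tau_k^\alpha D_\alpha = D_\alpha\tau_k^\alpha$ and $\tau_k^\alpha\rho_m^\alpha = \rho_m^\alpha\tau_k^\alpha$, yielding $2h\,\ri\omega\,\tau_k^\alpha D_\alpha w_{\bi,\bj} = \tau_{k+1}^\alpha\rho_1^\alpha w_{\bi,\bj} + \tau_{k+1}^\alpha w_{\bi,\bj} - \tau_k^\alpha\rho_1^\alpha w_{\bi,\bj} - \tau_k^\alpha w_{\bi,\bj}$. I would then eliminate every $\tau$-shift of $w$ in favour of a $\rho$-shift by \eqref{eq_taukw2}, abbreviating $\tau_k^\alpha w_{\bi,\bj} = \rho_k^\alpha w_{\bi,\bj} + S_k$ with $S_k = h\sum_{l=1}^{k}\rho_{l-1}^\alpha(\sigma_{\bj}^\alpha(\tau_{k-l}^\alpha D_\alpha w_{\bi,\bj}))$, keeping each outer $\rho_1^\alpha$ to the left of the bracket so that the coefficient $\sigma_{\bj}^\alpha$ stays correctly nested under the $\rho$-shifts. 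The four pure-$w$ contributions then telescope to $(\rho_{k+2}^\alpha - \rho_k^\alpha)w_{\bi,\bj}$, the two copies of $\rho_{k+1}^\alpha w_{\bi,\bj}$ cancelling, while the remaining contributions assemble into $(\rho_1^\alpha + \rho_0^\alpha)(S_{k+1} - S_k)$.

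The crux is the bookkeeping for $S_{k+1} - S_k$. After reindexing so that both sums carry the common translate $\sigma_{\bj}^\alpha\tau_{k-m}^\alpha D_\alpha w_{\bi,\bj}$, the $l=1$ summand of $S_{k+1}$ survives as $h\sigma_{\bj}^\alpha\tau_k^\alpha D_\alpha w_{\bi,\bj}$ and the overlapping summands combine into $h\sum_{m=1}^{k}(\rho_m^\alpha - \rho_{m-1}^\alpha)(\sigma_{\bj}^\alpha\tau_{k-m}^\alpha D_\alpha w_{\bi,\bj})$; applying the prefactor and using the operator identity $(\rho_1^\alpha+\rho_0^\alpha)(\rho_m^\alpha-\rho_{m-1}^\alpha) = \rho_{m+1}^\alpha - \rho_{m-1}^\alpha$ turns these into precisely the middle term and the sum in \eqref{eq_tdw1}, and dividing by $2h$ finishes that identity. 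The backward identity \eqref{eq_tdw2} follows by the same route with $\tau_{-k}^\alpha$ in place of $\tau_k^\alpha$ and \eqref{eq_taukw} in place of \eqref{eq_taukw2}, the sign reversals there producing the $\rho_{-1}^\alpha+\rho_0^\alpha$ and $\rho_{-l-1}^\alpha-\rho_{-l+1}^\alpha$ factors. I expect this last step to be the main obstacle: one must track the nesting of $\sigma_{\bj}^\alpha$ beneath the $\rho$-shifts (recall that $\rho_m^\alpha$ acts on $\sigma_{\bj}^\alpha$ as well), and, because the forward sum $S_{k+1}$ runs one step longer than $S_k$ whereas the backward sum runs one step shorter, the telescoping terminates asymmetrically, so the endpoint summand in the backward case demands particular care to reconcile with the stated right-hand side.
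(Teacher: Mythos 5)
Your argument is, at its core, the same as the paper's: the paper also evaluates the definition \eqref{eq_scadiscomder} at $\tau_k^\alpha w_{\bi,\bj}$, obtaining $(2\ri h\omega - h\sigma_{\bj}^\alpha)\,\tau_k^\alpha D_\alpha w_{\bi,\bj} = \tau_{k+1}^\alpha(\rho_1^\alpha w_{\bi,\bj}) - \tau_k^\alpha w_{\bi,\bj}$, expands both right-hand terms by \eqref{eq_taukw2}, reindexes the two sums, and moves the $-h\sigma_{\bj}^\alpha$ term across to assemble the $(\rho_1^\alpha+\rho_0^\alpha)$ factor. Your one genuine variant --- eliminating $\sigma_{\bj}^\alpha$ first via the $k=1$ case of \eqref{eq_taukw2} to get the base identity $2h\,\ri\omega D_\alpha w = \tau_1^\alpha\rho_1^\alpha w + \tau_1^\alpha w - \rho_1^\alpha w - w$, then translating by $\tau_{\pm k}^\alpha$ --- is algebraically equivalent but slightly cleaner bookkeeping: by writing $\tau_{k+1}^\alpha\rho_1^\alpha w = \rho_1^\alpha(\tau_{k+1}^\alpha w)$ and expanding \emph{inside} the outer $\rho_1^\alpha$, you never apply \eqref{eq_taukw2} to the shifted function $\rho_1^\alpha w$ itself, whereas the paper does and must then trade the resulting coefficient $\sigma_{j_\alpha+1}^\alpha$ back for $\sigma_{\bj}^\alpha$ by commuting the shift outward. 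Your telescoping of the pure-$w$ terms, the reindexing of $S_{k+1}-S_k$, and the operator identity $(\rho_1^\alpha+\rho_0^\alpha)(\rho_m^\alpha-\rho_{m-1}^\alpha)=\rho_{m+1}^\alpha-\rho_{m-1}^\alpha$ all check out and reproduce \eqref{eq_tdw1} exactly.

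The ``endpoint obstacle'' you flag in the backward case is real, but it is not a gap in your argument --- it is a misprint in the statement. Carrying out your scheme (or the paper's; the paper merely asserts that the proof of \eqref{eq_tdw2} is analogous) pairs $\rho_1^\alpha\tau_{-k+1}^\alpha w$, whose expansion via \eqref{eq_taukw} carries $k-1$ $\sigma$-terms, against $\tau_{-k}^\alpha w$, which carries $k$; one term survives unpaired and the rest telescope, giving
\begin{align*}
2\ri h\omega\,\tau_{-k}^\alpha D_\alpha w_{\bi,\bj} =&~ (\rho_{-k+2}^\alpha-\rho_{-k}^\alpha)w_{\bi,\bj} + h(\rho_{-1}^\alpha+\rho_0^\alpha)\left(\sigma_{\bj}^\alpha\tau_{-k}^\alpha D_\alpha w_{\bi,\bj}\right) \\
&~+ h\sum_{l=1}^{k-1}(\rho_{-l-1}^\alpha-\rho_{-l+1}^\alpha)\left(\sigma_{\bj}^\alpha\tau_{-k+l}^\alpha D_\alpha w_{\bi,\bj}\right),
\end{align*}
that is, the sum in \eqref{eq_tdw2} should terminate at $l=k-1$, not $l=k$; the printed $l=k$ summand $(\rho_{-k-1}^\alpha-\rho_{-k+1}^\alpha)(\sigma_{\bj}^\alpha D_\alpha w)$ is spurious (already for $k=1$ a direct computation from \eqref{eq_scadiscomder} and \eqref{eq_taukw} yields only the first two terms). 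The corrected limit is moreover what the paper actually uses downstream: the derivation in Section~\ref{subsec_dodpe} and the evolution equations \eqref{eq_disPMLmaineq2}, \eqref{eq_disPMLmaineq4}, \eqref{eq_disPMLmaineq6} all carry sums up to $k-1$. So finish your write-up by proving \eqref{eq_tdw2} with upper limit $k-1$ and noting the misprint, rather than attempting to reconcile the extra endpoint term with the stated right-hand side.
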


\begin{proof}
	By the definitions of \( D_\alpha \) and \( z_{i,j}^\alpha \), i.e., Eq.~\eqref{eq_scadiscomder} and Eq.~\eqref{eq_defzij}, we have
	\begin{align}
 		\tau_k^\alpha D_\alpha w_{\bi,\bj} = D_\alpha (\tau_k^\alpha w_{\bi,\bj}) = \frac{1}{\ri \omega} \frac{ \rho_1^\alpha \tau_1^\alpha (\tau_k^\alpha w_{\bi,\bj}) - (\tau_k^\alpha w_{\bi,\bj}) }{ \rho_1^\alpha \tau_1^\alpha z_{\bi,\bj}^\alpha -  z_{\bi,\bj}^\alpha } = \frac{1}{\ri \omega} \frac{ \rho_1^\alpha \tau_{k+1}^\alpha w_{\bi,\bj} - \tau_k^\alpha w_{\bi,\bj} }{ 2h + \ri \frac{h}{\omega} \sigma_{\bj}^\alpha }
	\end{align}
	which, combining with Eq.~\eqref{eq_taukw2}, implies
	\begin{align}
 		& (2\ri h \omega - h \sigma_{\bj}^\alpha ) \tau_k^\alpha D_\alpha w_{\bi,\bj} \notag\\
 		=&~ \tau_{k+1}^\alpha (\rho_1^\alpha w_{\bi,\bj}) - \tau_k^\alpha w_{\bi,\bj} \notag\\
 		=&~ \left( \rho_{k+1}^\alpha (\rho_1^\alpha w_{\bi,\bj}) + h \sum_{l=1}^{k+1} \rho_{l-1}^\alpha \left( \sigma_{j_\alpha+1}^\alpha \left( \tau_{k+1-l}^\alpha D_\alpha (\rho_1^\alpha w_{\bi,\bj}) \right) \right) \right) \notag\\
 		& - \left( \rho_k^\alpha w_{\bi,\bj} + h \sum_{l=1}^k \rho_{l-1}^\alpha \left( \sigma_{\bj}^\alpha \left( \tau_{k-l}^\alpha D_\alpha w_{\bi,\bj} \right) \right) \right) \notag \\
 		=&~ \rho_{k+2}^\alpha w_{\bi,\bj} + h \sum_{l=1}^{k+1} \rho_{l}^\alpha \left( \sigma_{\bj}^\alpha \left( \tau_{k+1-l}^\alpha D_\alpha w_{\bi,\bj} \right) \right) - \rho_k^\alpha w_{\bi,\bj} -  h \sum_{l=1}^k \rho_{l-1}^\alpha \left( \sigma_{\bj}^\alpha \left( \tau_{k-l}^\alpha D_\alpha w_{\bi,\bj} \right) \right) \notag\\
 		=&~ \left( \rho_{k+2}^\alpha - \rho_{k}^\alpha \right) w_{\bi,\bj} + h \rho_{1}^\alpha \left( \sigma_{\bj}^\alpha \left( \tau_{k}^\alpha D_\alpha w_{\bi,\bj} \right) \right) + h \sum_{l=1}^k \left( \rho_{l+1}^\alpha - \rho_{l-1}^\alpha \right) \left( \sigma_{\bj}^\alpha \left( \tau_{k-l}^\alpha D_\alpha w_{\bi,\bj} \right) \right).
	\end{align}
	This proves the identity \eqref{eq_tdw1}. The proof of \eqref{eq_tdw2} is analogous.
\end{proof}

\subsection{Discrete PML equations}
Here we will give the discrete PML equations and the derivation is postponed in Section~\ref{subsec_dodpe}.

Using a discrete version of complex coordinate stretching, the discrete PML for the discrete wave equation~\eqref{eq_diswaveeq} are derived: for $\bi$ in the PML domain, it has 
\begin{align}
	\partial_t^2 u_\bi =&~ \mathcal{L}_h u_\bi + h \sum_{\alpha=1}^2 \sum_{k\in\mathbb{Z}^+} \sum_{l=0}^{k-1} a_{k} \left( \tau_{l}^{\alpha} \left( \sigma_{\bi}^\alpha \tilde{\psi}_{\bi}^{\alpha,k-l} \right) -  \tau_{-l-1}^{\alpha} \left( \sigma_{\bi}^\alpha \bar{\psi}_\bi^{\alpha,k-l}\right) \right)  \notag\\
	& + h \sum_{\alpha\neq\beta} \sum_{\bk \in(\mathbb{Z}^+)^2} \sum_{l=0}^{k_\alpha-1} a_{\bk} \left(  \tau_{-k_{\beta}}^{\beta}  + \tau_{k_{\beta}}^{\beta}  \right)  \left(  \tau_{l}^{\alpha}  \left(  \sigma_{\bi}^\alpha \tilde{\psi}_\bi^{\alpha,k_\alpha-l} \right) -   \tau_{-l-1}^{\alpha} \left( \sigma_{\bi}^\alpha \bar{\psi}_\bi^{\alpha,k_\alpha-l} \right)  \right) \notag\\
	& + h^2 \sum_{\bk\in(\mathbb{Z}^+)^2} \sum_{\substack{0\leq l\leq k_1-1\\ 0\leq m\leq k_2-1}} a_{\bk}  \Bigg[ \tau_{l}^{1} \tau_{m}^{2} \left( \sigma_{\bi}^{1} \sigma_{\bi}^{2}  \tilde{\tilde{\psi}}_\bi^{k_1-l,k_2-m} \right)  - \tau_{-l-1}^{1} \tau_{m}^{2} \left( \sigma_{\bi}^{1} \sigma_{\bi}^{2} \tilde{\bar{\psi}}_\bi^{k_1-l,k_2-m}  \right) \notag\\
	& - \tau_{l}^{1} \tau_{-m-1	}^{2} \left( \sigma_{\bi}^{1} \sigma_{\bi}^{2} \bar{\tilde{\psi}}_\bi^{k_1-l,k_2-m} \right) + \tau_{-l-1}^{1} \tau_{-m-1}^{2} \left(  \sigma_{\bi}^{2} \sigma_{\bi}^{2} \bar{\bar{\psi}}_\bi^{k_1-l,k_2-m} \right) \Bigg], \label{eq_disPMLmaineq}
\end{align}
where $a_k=a_{0,k}=a_{k,0}$, the auxiliary variables $\tilde{\psi}_\bi^{\alpha,k}$ and $\bar{\psi}_\bi^{\alpha,k}$ satisfy the following equations
\begin{align}
	\partial_t \tilde{\psi}_\bi^{\alpha,k} =&~ \frac{ ( \tau_{k-1}^{\alpha} - \tau_{k+1}^{\alpha}  ) u_\bi }{2h} - \frac{ ( \tau_{1}^{\alpha} + \tau_0^\alpha ) ( \sigma_{\bi}^\alpha \tilde{\psi}_\bi^{\alpha,k} )  }{2}  -  \sum_{l=1}^{k-1} \frac{ (  \tau_{l+1}^{\alpha} - \tau_{l-1}^{\alpha} ) ( \sigma_{\bi}^\alpha \tilde{\psi}_\bi^{\alpha,k-l} )  }{2}, \label{eq_disPMLmaineq1}\\
	\partial_t \bar{\psi}_\bi^{\alpha,k} =&~ \frac{ ( \tau_{-k}^{\alpha} - \tau_{-k+2}^{\alpha} ) u_\bi }{2h} -  \frac{ ( \tau_0^\alpha + \tau_{-1}^{\alpha} ) ( \sigma_{\bi}^\alpha \bar{\psi}_\bi^{\alpha,k} ) }{2}  -  \sum_{l=1}^{k-1} \frac{ ( \tau_{-l-1}^{\alpha}  - \tau_{-l+1}^{\alpha} ) ( \sigma_{\bi}^\alpha \bar{\psi}_\bi^{\alpha,k-l} ) }{2}, \label{eq_disPMLmaineq2}
\end{align}
and auxiliary variables \( \tilde{\tilde{\psi}}_\bi^\bk, \tilde{\bar{\psi}}_\bi^\bk, \bar{\tilde{\psi}}_\bi^\bk \) and \( \bar{\bar{\psi}}_\bi^\bk \) satisfy the following equations
\begin{align}
	\partial_t  \tilde{\tilde{\psi}}_\bi^\bk =&~ \frac{ ( \tau_{k_1-1}^{1} - \tau_{k_1+1}^{1}  ) \tilde{\psi}_\bi^{2,k_2} }{2h} - \frac{ ( \tau_{1}^{1} + \tau_0^1 ) ( \sigma_{\bi}^1 \tilde{\tilde{\psi}}_\bi^\bk )  }{2}  -  \sum_{l=1}^{k_1-1} \frac{ ( \tau_{l+1}^{1} - \tau_{l-1}^{1} ) ( \sigma_{\bi}^1 \tilde{\tilde{\psi}}_\bi^{k_1-l,k_2} ) }{2}, \label{eq_disPMLmaineq3}\\
	\partial_t \tilde{\bar{\psi}}_\bi^\bk =&~ \frac{ ( \tau_{-k_1}^{1} - \tau_{-k_1+2}^{1} ) \tilde{\psi}_\bi^{2,k_2}  }{2h} -  \frac{ ( \tau_0^1 + \tau_{-1}^{1} ) ( \sigma_{\bi}^1  \tilde{\bar{\psi}}_\bi^\bk ) }{2}  -  \sum_{l=1}^{k_1-1} \frac{ ( \tau_{-l-1}^{1} - \tau_{-l+1}^{1} ) ( \sigma_{\bi}^1  \tilde{\bar{\psi}}_\bi^{k_1-l,k_2} ) }{2},  \label{eq_disPMLmaineq4}\\
	\partial_t \bar{\tilde{\psi}}_\bi^\bk =&~ \frac{ (  \tau_{k_1-1}^{1}  - \tau_{k_1+1}^{1} ) \bar{\psi}_\bi^{2,k_2} }{2h} - \frac{ ( \tau_{1}^{1} + \tau_0^1 ) ( \sigma_{\bi}^1 \bar{\tilde{\psi}}_\bi^\bk )  }{2}  -  \sum_{l=1}^{k_1-1} \frac{ ( \tau_{l+1}^{1} - \tau_{l-1}^{1} ) ( \sigma_{\bi}^1 \bar{\tilde{\psi}}_\bi^{k_1-l,k_2} ) }{2}, \label{eq_disPMLmaineq5}\\
	\partial_t \bar{\bar{\psi}}_\bi^\bk =&~ \frac{ ( \tau_{-k_1}^{1} - \tau_{-k_1+2}^{1}  ) \bar{\psi}_\bi^{2,k_2}  }{2h} - \frac{ ( \tau_0^1 + \tau_{-1}^{1} ) ( \sigma_{\bi}^1  \bar{\bar{\psi}}_\bi^\bk ) }{2}  -  \sum_{l=1}^{k_1-1} \frac{ ( \tau_{-l-1}^{1} - \tau_{-l+1}^{1} ) ( \sigma_{\bi}^1  \bar{\bar{\psi}}_\bi^{k_1-l,k_2} ) }{2}. \label{eq_disPMLmaineq6}
\end{align}
$\sigma^\alpha_\bi=\sigma_{i_\alpha}^\alpha\geq0$ are the PML damping coefficients, with $\sigma^\alpha_{i_\alpha}=0$ for \( i_\alpha<0 \). In the physical domain where $\sigma^\alpha=0$ for all \( \alpha=1,2 \), the auxiliary variables are set to be zero and Eq.~\eqref{eq_disPMLmaineq} is reduced to the discrete wave equation.~\eqref{eq_diswaveeq}. The auxiliary variables \( \tilde{\psi}^\alpha_\bi,\bar{\psi}^\alpha_\bi \) and their evolution equations~\eqref{eq_disPMLmaineq1}--\eqref{eq_disPMLmaineq2} only need to be supported in the half space \( \{i_\alpha\geq 0\} \), that is, the discrete wave equation is a special case of the discrete PML equations with zero damping. Similarly, the auxiliary variables \( \tilde{\tilde{\psi}}_\bi, \tilde{\bar{\psi}}_\bi, \bar{\tilde{\psi}}_\bi, \bar{\bar{\psi}}_\bi \) and their evolution equations~\eqref{eq_disPMLmaineq3}--\eqref{eq_disPMLmaineq6} only need to be supported in the corner \( \{i_1\geq 0\}\cap\{i_2\geq 0\} \).

The derivation of PML equations \eqref{eq_disPMLmaineq}--\eqref{eq_disPMLmaineq6} is based on the idea discussed in the beginning of this section (see \eqref{eq_disanacondiswave}--\eqref{est_exp}), namely, we expect the fundamental solution (i.e., the holomorphic extension of $e^{\ri \kappa_\alpha^h \cdot \bi h}$ defined as $u_{\bi,\bj}$ in \eqref{eq_disanacondiswave2}) to the resulting PML equations to decay exponentially in PML domain, which is the following theorem.

The feasible values of numerical wave numbers \( \kappa_\alpha^h \) are further restricted to
\begin{align}
 	\mathcal{K} := \{ \kappa_\alpha^h\subset\mathbb{C}|\ -\frac{\pi}{h} < \Re\kappa_\alpha^h \leq \frac{\pi}{h},\ \Im \kappa_\alpha^h <0 \}, \quad \alpha=1,2,
\end{align}
using the periodicity of \( e^{\ri \kappa_\alpha^h i_\alpha h} \) and the boundedness assumption of \( u_\bi(t) \). For waves propagating in the \( +x_\alpha \) direction, \( \mathrm{sgn}(\omega)\Re \kappa_\alpha^h \) is positive. The wave that has \( \Im\kappa_\alpha^h=0 \) for all \( \alpha \) is called a plane wave. The wave that has \( \Im \kappa_\alpha <0 \) for some \( \alpha \) is called an evanescent wave (cf. \cite{chern2019reflectionless}).

\begin{lemma}	\label{th_main1}
 {For \( \kappa_\alpha^h \in \mathcal{K} \), \( \omega\in\mathbb{R} \setminus\{0\} \) and \( \alpha \in \{1,2\} \) , the holomorphic extension  \( e^{-\ri\omega t} u_{\bi,\bj} \) of the fundamental solution \( e^{-\ri\omega t} e^{\ri \bka^h\cdot\bi h} \) to Eq. \eqref{eq_diswaveeq} is discrete holomorphic on \( (\Lambda, z^\alpha) \) and still satisfies Eq.~\eqref{eq_diswaveeq}, where}
 	\begin{align}
 	 	u_{\bi,\bj} = \prod_{\alpha=1}^2 \eta(j_\alpha,\sigma^\alpha,\kappa_\alpha^h,\omega) e^{\ri \kappa_\alpha^h (i_\alpha+j_\alpha) h}\label{eq_disanacondiswave2}
	\end{align}
	with 
	\begin{equation*} \eta(j_\alpha,\sigma^\alpha,\kappa_\alpha^h,\omega)=  \prod_{l=0}^{j_\alpha-1} \frac{ 2+\ri\frac{\sigma^\alpha_l}{\omega}(1-e^{-\ri \kappa_\alpha^h h}) }{ 2+\ri\frac{\sigma^\alpha_l}{\omega}(1-e^{\ri \kappa_\alpha^h h}) }.
	\end{equation*}
	Moreover, for \( \omega\in\mathbb{R}^+, \sigma^\alpha>0, \kappa_\alpha^h\in\mathcal{K} \) and \( \mathrm{sgn}(\omega)\Re(\kappa_\alpha^h)>0 \), it holds that
	\begin{align}
 		\left| \frac{ 2+\ri\frac{\sigma^\alpha_l}{\omega}(1-e^{-\ri \kappa_\alpha^h h}) }{ 2+\ri\frac{\sigma^\alpha_l}{\omega}(1-e^{\ri \kappa_\alpha^h h}) } \right| < 1. \label{eq_estdampcoe}
	\end{align}
\end{lemma}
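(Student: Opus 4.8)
The plan is to verify, in order: that the extension restricts to the fundamental solution at $\bj=\b0$, that $e^{-\ri\omega t}u_{\bi,\bj}$ satisfies \eqref{eq_diswaveeq} in the $\bi$-variable, that $u_{\bi,\bj}$ is discrete holomorphic on each $(\Lambda,z^\alpha)$, and finally the ratio bound \eqref{eq_estdampcoe}. First note that setting $\bj=\b0$ makes every $\eta$-product empty, so $u_{\bi,\b0}=e^{\ri\bka^h\cdot\bi h}$ and the extension indeed restricts to the fundamental solution. That it solves \eqref{eq_diswaveeq} is almost immediate: since $\mathcal{L}_h$ only translates the first index and $u_{\bi+\bk,\bj}=u_{\bi,\bj}\,e^{\ri\bka^h\cdot\bk h}$, one gets $\mathcal{L}_h u_{\bi,\bj}=u_{\bi,\bj}\sum_{\bk}a_\bk e^{\ri\bka^h\cdot\bk h}$, while $\partial_t^2 e^{-\ri\omega t}=-\omega^2 e^{-\ri\omega t}$; the equation then holds precisely because $-\omega^2=\sum_{\bk}a_\bk e^{\ri\bka^h\cdot\bk h}$, which is the discrete dispersion relation \eqref{eq_disdeprel} after pairing $\bk$ with $-\bk$ and using $a_\bk=a_{-\bk}$.

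For the discrete holomorphicity on $(\Lambda,z^\alpha)$, I would exploit that $u_{\bi,\bj}$ factorizes into a dimension-$\alpha$ piece $g(i_\alpha,j_\alpha)=\eta(j_\alpha,\sigma^\alpha,\kappa_\alpha^h,\omega)\,e^{\ri\kappa_\alpha^h(i_\alpha+j_\alpha)h}$ times a factor in the other coordinate that the shifts $\tau_1^\alpha,\rho_1^\alpha$ leave untouched; hence both sides of the discrete Cauchy--Riemann equation \eqref{eq_discomderw} scale by that common factor and it suffices to verify \eqref{eq_discomderw} for $g$ alone. Writing $q=e^{\ri\kappa_\alpha^h h}$, $s=\ri\sigma_{j_\alpha}^\alpha/\omega$, and $r=A/B$ for the single M\"obius factor with $A=2+s(1-q^{-1})$, $B=2+s(1-q)$, a direct computation of the four corner values of $g$ together with the increments $\tau_1^\alpha\rho_1^\alpha z^\alpha-z^\alpha=h(2+s)$ and $\rho_1^\alpha z^\alpha-\tau_1^\alpha z^\alpha=hs$ reduces \eqref{eq_discomderw} to the scalar identity $(rq^2-1)\,s=q(r-1)(2+s)$. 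Clearing $B$ and using the factorizations $Aq^2-B=(q-1)(q+1)(2+s)$ and $A-B=s(q-1)(q+1)/q$ collapses both sides to $s(q-1)(q+1)(2+s)$, confirming the identity; thus $\eta$ is constructed exactly so that $g$ is the discrete-holomorphic continuation.

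It remains to prove the strict bound \eqref{eq_estdampcoe}, which I expect to be the main technical step. Set $\mu=\sigma_l^\alpha/\omega>0$ and write $\kappa_\alpha^h h=\theta-\ri\zeta$, where $\theta=\Re(\kappa_\alpha^h h)\in(0,\pi]$ (forced by $\mathrm{sgn}(\omega)\Re\kappa_\alpha^h>0$ with $\omega>0$) and $\zeta=-\Im(\kappa_\alpha^h h)>0$ (forced by $\kappa_\alpha^h\in\mathcal K$). Substituting $e^{\pm\ri\kappa_\alpha^h h}=e^{\pm\zeta}(\cos\theta\pm\ri\sin\theta)$ into the numerator $A_l$ and denominator $B_l$, separating real and imaginary parts, and expanding $|A_l|^2-|B_l|^2$ (the constant $4+\mu^2$ terms cancel) yields, after collecting the hyperbolic combinations,
\begin{align*}
|A_l|^2-|B_l|^2 = -8\mu\sin\theta\,\cosh\zeta + 4\mu^2\sinh\zeta\,(\cos\theta-\cosh\zeta).
\end{align*}
The first term is $\le 0$ because $\theta\in(0,\pi]$ gives $\sin\theta\ge0$, and the second is strictly negative because $\zeta>0$ gives $\sinh\zeta>0$ and $\cosh\zeta>1\ge\cos\theta$; hence $|A_l|^2<|B_l|^2$, which also shows $B_l\neq0$ and gives \eqref{eq_estdampcoe}. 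The main obstacle is precisely this sign analysis: one must carry out the real/imaginary decomposition for a genuinely complex wavenumber and recognize that the outgoing condition ($\theta>0$) controls the first term while the $\mathcal K$-condition ($\zeta>0$, i.e. the exclusion of plane waves) is exactly what makes the second term strictly negative and thereby secures the strict inequality.
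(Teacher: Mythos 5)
Your proof is correct, and it is genuinely more self-contained than the paper's own. The paper disposes of the lemma in two sentences: it makes the same translation-invariance observation as your second step (shifting $\bi$ by $\bk$ multiplies $u_{\bi,\bj}$ by $e^{\ri\bka^h\cdot\bk h}$, so the dispersion relation \eqref{eq_disdeprel} yields that $u_{\bi,\bj}$ still solves \eqref{eq_diswaveeq}), and then cites \cite[Theorem 3.1]{chern2019reflectionless} outright for both the discrete holomorphicity and the estimate \eqref{eq_estdampcoe}. You instead verify both claims directly: the factorization of $u_{\bi,\bj}$ into one-dimensional pieces legitimately reduces the Cauchy--Riemann condition \eqref{eq_discomderw} to the scalar M\"obius identity $(rq^2-1)s=q(r-1)(2+s)$, and your factorizations $Aq^2-B=(q-1)(q+1)(2+s)$ and $A-B=s(q-1)(q+1)/q$ check out (the algebra is right, and it also covers faces with $j_\alpha<0$, where $s=0$ and $r=1$). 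Likewise your expansion $|A_l|^2-|B_l|^2=-8\mu\sin\theta\,\cosh\zeta+4\mu^2\sinh\zeta\,(\cos\theta-\cosh\zeta)$ is correct and makes visible exactly where each hypothesis enters: $\theta\in(0,\pi]$ (outgoing direction) controls the first term, while the strict condition $\Im\kappa_\alpha^h<0$ in $\mathcal{K}$ secures strictness even at $\theta=\pi$ --- a point the citation hides; your computation in fact shows slightly more, namely that for $\theta\in(0,\pi)$ strictness already holds at $\zeta=0$. What the paper's route buys is brevity and reuse of a known result for the same lattice-PML structure; what yours buys is a hypothesis-tracking, verifiable argument, including the restriction check $u_{\bi,\b0}=e^{\ri\bka^h\cdot\bi h}$ and the nonvanishing of the denominators ($|B_l|>|A_l|\ge0$), neither of which the paper records. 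One cosmetic remark: as your pairing argument implicitly flags, the printed relation \eqref{eq_disdeprel} omits the $k$'s inside the cosines; your reading $-\omega^2=\sum_{\bk} a_\bk e^{\ri\bka^h\cdot\bk h}$ with $a_\bk=a_{-\bk}$ is the intended one.
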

\begin{proof}
	Since \(  e^{-\ri\omega t} e^{\ri \bka^h\bi h} \) is the solution to \eqref{eq_diswaveeq} and \( \tau^\alpha \) is the translation operator with respect to \( i_\alpha \), it is easy to get that \( u_{\bi,\bj} \) is also the solution to \eqref{eq_diswaveeq}. The proof for the discrete-holomorphicity of \( u_{\bi,\bj} \) and the estimate \eqref{eq_estdampcoe} follows from the \cite[Theorem 3.1]{chern2019reflectionless}.
\end{proof}

\subsection{Derivation of discrete PML equations} \label{subsec_dodpe}
In this section, we derive the PML equations~\eqref{eq_disPMLmaineq}--\eqref{eq_disPMLmaineq6} for the discrete wave equation~\eqref{eq_diswaveeq} on the PML domain. We first complexify the discrete wave equation~\eqref{eq_diswaveeq} into the PML equations~\eqref{eq_disPMLmaineq} that \( \prod_{\alpha=1}^2 \eta(i_\alpha,\sigma^\alpha,\kappa_\alpha^h,\omega) e^{\ri \kappa_\alpha^h i_\alpha h} \) satisfies.

It can be observed that \( a_{k,0}=a_{0,k} \) for any \( k\in\bbz \) and \( a_\bk = a_{-k_1,k_2} = a_{k_1,-k_2} = a_{-k_1,-k_2} \) for any \( \bk\in\bbz^2 \). To derive the discrete PML, we denote by \( a_k=a_{0,k}=a_{k,0} \), and rewrite the discrete nonlocal operator~\eqref{eq_deflhui} as
\begin{align}
	\mathcal{L}_h u_\bi(t) =&~ a_0 u_\bi(t) + \sum_{\alpha=1}^2 \sum_{k\in\mathbb{Z}^+} a_{k}  ( \tau_{k}^{\alpha} + \tau_{-k}^{\alpha} ) u_\bi(t)  \notag\\
	&+ \sum_{\bk=(k_1,k_2)\in(\mathbb{Z}^+)^2} a_\bk ( \tau_{k_1}^{1} \tau_{k_2}^{2} + \tau_{-k_1}^{1} \tau_{k_2}^{2} + \tau_{k_1}^{1} \tau_{-k_2}^{2} + \tau_{-k_1}^{1} \tau_{-k_2}^{2} )  u_{\bi}(t),
\end{align}
where $\tau_k^1$ and $\tau_k^2$ are the translation operators $\tau_k^1 u_\bi = u_{i_1+k,i_2}, \tau_k^2 u_\bi = u_{i_1,i_2+k}$.

We begin with the application of the Fourier transform in \( t \) to \eqref{eq_diswaveeq}, namely 
\begin{align}
 	-\omega^2 u_\bi =&~ a_0 u_\bi + \sum_{\alpha=1}^2 \sum_{k\in\mathbb{Z}^+} a_{k}  ( \tau_{k}^{\alpha} + \tau_{-k}^{\alpha} ) u_\bi  \notag\\
	&+ \sum_{\bk=(k_1,k_2)\in(\mathbb{Z}^+)^2} a_\bk ( \tau_{k_1}^{1} \tau_{k_2}^{2} + \tau_{-k_1}^{1} \tau_{k_2}^{2} + \tau_{k_1}^{1} \tau_{-k_2}^{2} + \tau_{-k_1}^{1} \tau_{-k_2}^{2} )  u_{\bi},\quad \bi\in \Omega_p^+, \label{eq_disfou2}
\end{align}
where \( u_\bi = u_\bi(\omega) \) is the Fourier transform of \( u_\bi(t) \). Its fundamental solution is \( u_\bi = e^{\ri \bka^h\cdot\bi h} \) with \( \bka^h \) satisfying the discrete dispersion relation \eqref{eq_disdeprel}.

We first introduce some notations for ease of presentation as 
\begin{align}
 	&\tilde{\psi}_\bj^{\alpha,k+1} = \tau_{k}^\alpha D_\alpha u_{\b0,\bj},\qquad\qquad \bar{\psi}_\bj^{\alpha,k} = \tau_{-k}^\alpha D_\alpha u_{\b0,\bj},\qquad\qquad\quad \alpha=1,2,k\in\bbz^+, \\
 	&\tilde{\tilde{\psi}}_\bj^{k_1+1,k_2+1} = \tau_{k_1}^1 \tau_{k_2}^2 D_{1,2} u_{\b0,\bj},\ \bar{\tilde{\psi}}_\bj^{k_1+1,k_2} = \tau_{k_1}^1 \tau_{-k_2}^2 D_{1,2} u_{\b0,\bj},\quad k_1,k_2\in\bbz^+, \\
 	& \tilde{\bar{\psi}}_\bj^{k_1,k_2+1} = \tau_{-k_1}^1 \tau_{k_2}^2 D_{1,2} u_{\b0,\bj},\quad \bar{\bar{\psi}}_\bj^{k_1,k_2} = \tau_{-k_1}^1 \tau_{-k_2}^2 D_{1,2} u_{\b0,\bj},\quad k_1,k_2\in\bbz^+.
\end{align}
Substituting \( u_{\bi,\bj} \) given in Lemma~\ref{th_main1} into \eqref{eq_disfou2} yields
\begin{align}
 	-\omega^2 u_{\b0,\bj} =&~ a_0 u_{\b0,\bj} + \sum_{\alpha=1}^2 \sum_{k\in\mathbb{Z}^+} a_{k} ( \tau_{k}^{\alpha} + \tau_{-k}^{\alpha} ) u_{\b0,\bj}  \notag\\
 	&+ \sum_{\bk\in(\mathbb{Z}^+)^2} a_\bk ( \tau_{k_1}^{1} \tau_{k_2}^{2} + \tau_{-k_1}^{1} \tau_{k_2}^{2} + \tau_{k_1}^{1} \tau_{-k_2}^{2} + \tau_{-k_1}^{1} \tau_{-k_2}^{2} )  u_{\b0,\bj}. \label{eq_dermaineq1}
\end{align}
Since $u_{\bi,\bj}$ is discrete-holomorphic on \( (\Lambda, z^\alpha) \) for each \( \alpha \), it follows from Proposition~\ref{pro_tauw} that 
\begin{align}
 	&~ ( \tau_{k}^{\alpha} + \tau_{-k}^{\alpha} ) u_{\b0,\bj} \notag \\
	=&~ \rho_k^\alpha u_{\b0,\bj} + h\sum_{l=1}^{k} \rho_{l-1}^\alpha \left( \sigma_{\bj}^\alpha \left( \tau_{k-l}^\alpha D_\alpha u_{\b0,\bj} \right) \right) + \rho_{-k}^\alpha u_{\b0,\bj} + h\sum_{l=1}^{k} \rho_{-l}^\alpha \left( \sigma_{\bj}^\alpha \left( \tau_{-k+l-1}^\alpha D_\alpha u_{\b0,\bj} \right) \right) \notag\\
 	=&~ ( \rho_k^\alpha + \rho_{-k}^\alpha ) u_{\b0,\bj} + h \sum_{l=1}^{k} \left( \rho_{l-1}^\alpha \left( \sigma_{\bj}^\alpha \tilde{\psi}_\bj^{\alpha,k-l+1} \right) - \rho_{-l}^\alpha \left( \sigma_{\bj}^\alpha \bar{\psi}_\bj^{\alpha,k-l+1} \right)  \right). \label{eq_dermaineq2}
\end{align}
Similarly, from Proposition~\ref{pro_tautauu},  it has 
\begin{align}
 	&~ \left( \tau_{k_1}^{1} \tau_{k_2}^{2} + \tau_{-k_1}^{1} \tau_{k_2}^{2} + \tau_{k_1}^{1} \tau_{-k_2}^{2} + \tau_{-k_1}^{1} \tau_{-k_2}^{2} \right)  u_{\b0}^\bj \notag\\
 	=&~ \left( \rho_{k_1}^1 \rho_{k_2}^2 + \rho_{-k_1}^1 \rho_{k_2}^2 + \rho_{k_1}^1 \rho_{-k_2}^2 + \rho_{-k_1}^1 \rho_{-k_2}^2 \right) u_{\b0,\bj}  \notag\\
 	&~ +  h \sum_{\alpha\neq\beta} \sum_{l=1}^{k_\alpha} \left( \rho_{-k_{\beta}}^{\beta} + \rho_{k_{\beta}}^{\beta} \right) \left( \rho_{l-1}^\alpha  \left( \sigma_{\bj}^\alpha \tilde{\psi}_\bj^{\alpha,k_\alpha-l+1} \right) -  \rho_{-l}^\alpha  \left( \sigma_{\bj}^\alpha \bar{\psi}_\bj^{\alpha,k_\alpha-l+1} \right) \right) \notag\\
 	&~ + h^2 \sum_{\substack{1\leq l\leq k_1\\ 1\leq m\leq k_2}}  \Bigg[ \rho_{l-1}^1 \rho_{m-1}^2 \left( \sigma_{\bj}^1 \sigma_{\bj}^2 \tilde{\tilde{\psi}}_\bj^{k_1-l+1,k_2-m+1} \right) - \rho_{-l}^1 \rho_{m-1}^2 \left( \sigma_{\bj}^1 \sigma_{\bj}^2 \tilde{\bar{\psi}}_\bj^{k_1-l+1,k_2-m+1}  \right) \notag\\
 	&~ - \rho_{l-l}^2 \rho_{-m}^1 \left( \sigma_{\bj}^1 \sigma_{\bj}^2 \bar{\tilde{\psi}}_\bj^{k_1-l+1,k_2-m+1}  \right) + \rho_{-l}^1 \rho_{-m}^2 \left( \sigma_{\bj}^1 \sigma_{\bj}^2 \bar{\bar{\psi}}_\bj^{k_1-l+1,k_2-m+1}\right) \Bigg]. \label{eq_dermaineq3}
\end{align}
Combining \eqref{eq_dermaineq1}--\eqref{eq_dermaineq3}, we obtain
\begin{align}
 	&~ -\omega^2 u_{\b0,\bj} \notag\\
	=&~ \sum_{\bk\in\bbz^2} a_\bk u_{\b0,\bj+\bk} + \sum_{\alpha=1}^2 \sum_{k\in\mathbb{Z}^+}\sum_{l=1}^{k}  a_{k} \left( \rho_{l-1}^\alpha \left( \sigma_{\bj}^\alpha \tilde{\psi}_\bj^{\alpha,k-l+1} \right) - \rho_{-l}^\alpha \left( \sigma_{\bj}^\alpha \bar{\psi}_\bj^{\alpha,k-l+1} \right)  \right) \notag\\
 	&~ + h \sum_{\alpha\neq\beta}  \sum_{\bk\in(\mathbb{Z}^+)^2} \sum_{l=1}^{k_\alpha} a_\bk \left( \rho_{-k_{\beta}}^{\beta} + \rho_{k_{\beta}}^{\beta} \right) \left( \rho_{l-1}^\alpha  \left( \sigma_{\bj}^\alpha \tilde{\psi}_\bj^{\alpha,k_\alpha-l+1} \right) -  \rho_{-l}^\alpha  \left( \sigma_{\bj}^\alpha \bar{\psi}_\bj^{\alpha,k_\alpha-l+1} \right) \right) \notag\\
 	&~ + h^2  \sum_{\bk\in(\mathbb{Z}^+)^2} \sum_{\substack{1\leq l\leq k_1\\ 1\leq m\leq k_2}} a_\bk \Bigg[ \rho_{l-1}^1 \rho_{m-1}^2 \left( \sigma_{\bj}^1 \sigma_{\bj}^2 \tilde{\tilde{\psi}}_\bj^{k_1-l+1,k_2-m+1} \right) - \rho_{-l}^1 \rho_{m-1}^2 \left( \sigma_{\bj}^1 \sigma_{\bj}^2 \tilde{\bar{\psi}}_\bj^{k_1-l+1,k_2-m+1}  \right) \notag\\
 	&~ - \rho_{l-l}^2 \rho_{-m}^1 \left( \sigma_{\bj}^1 \sigma_{\bj}^2 \bar{\tilde{\psi}}_\bj^{k_1-l+1,k_2-m+1}  \right) + \rho_{-l}^1 \rho_{-m}^2 \left( \sigma_{\bj}^1 \sigma_{\bj}^2 \bar{\bar{\psi}}_\bj^{k_1-l+1,k_2-m+1}\right) \Bigg]. \label{eq_dermaineqfin}
\end{align}
This equation can be rewritten as the PML equation~\eqref{eq_disPMLmaineq} by replacing \( \rho, u_{\b0,\bj},\bj \) by \( \tau, u_\bi,\bi \), respectively, and transforming \( u_\bi \) and other auxiliary variables back to the time domain. It is clear that the discrete analytic continuation of \( u_\bi(t) = e^{-\ri \omega t} e^{\ri\boldsymbol{\kappa}^h\cdot \bi h} \), \( \bi\in(\bbz^+)^2 \), given by
\begin{align}
 	\tilde{u}_\bi = e^{-\ri \omega t} \prod_{\alpha=1}^2  \eta(i_\alpha,\sigma^\alpha,\kappa_\alpha^h,\omega)   e^{\ri \kappa_\alpha^h i_\alpha h},
\end{align}
is the solution to Eq.~\eqref{eq_disPMLmaineq}. It is clear that \( \tilde{u}_\bi=u_\bi \) in the physical domain  because of \( \sigma^\alpha=0 \), and \( \tilde{u}_\bi \) is decaying exponentially when \( \bi \) is in the PML domain.

Next we derive the PML equations~\eqref{eq_disPMLmaineq1}--\eqref{eq_disPMLmaineq2} for auxiliary variables $\tilde{\psi}^{\alpha,k},\bar{\psi}^{\alpha,k}$. From the identity~\eqref{eq_tdw1} in Proposition~\ref{pro_tildepsi}, by setting \( \bi=\b0 \),
\begin{align}
 	\ri \omega \tau_k^\alpha D_\alpha  u_{\b0,\bj} =&~ \frac{ (\rho_{k+2}^\alpha - \rho_k^\alpha) u_{\b0,\bj} }{2h} + \frac{ (\rho_1^\alpha+\rho_0^\alpha) (\sigma_{\bj}^\alpha \tau_{k}^\alpha D_\alpha u_{\b0,\bj}) }{2}  + \sum_{l=1}^k \frac{ (\rho_{l+1}^\alpha - \rho_{l-1}^\alpha) ( \sigma_{\bj}^\alpha \tau_{k-l}^\alpha D_\alpha u_{\b0,\bj} ) }{2}
\end{align}
which is equivalent to
\begin{align}
 	\ri \omega \tilde{\psi}_\bj^{\alpha,k}  =&~ \frac{ (\rho_{k+1}^\alpha - \rho_{k-1}^\alpha) u_{\b0,\bj} }{2h} + \frac{ (\rho_1^\alpha+\rho_0^\alpha) (\sigma_{\bj}^\alpha \tilde{\psi}_\bj^{\alpha,k}) }{2}  + \sum_{l=1}^{k-1} \frac{ (\rho_{l+1}^\alpha - \rho_{l-1}^\alpha) ( \sigma_{j_\alpha}^\alpha \tilde{\psi}_\bj^{\alpha,k-l} ) }{2}.
\end{align}
Then replacing \( \rho, u_{\b0,\bj},\bj \) by \( \tau, u_\bi,\bi \), respectively, and transforming all variables back to the time domain, we arrive at the PML equation~\eqref{eq_disPMLmaineq1}. The PML equation~\eqref{eq_disPMLmaineq2} can be derived by a similar procedure using the identity~\eqref{eq_tdw2}.

Finally, we derive the PML equations~\eqref{eq_disPMLmaineq3}--\eqref{eq_disPMLmaineq6} for auxiliary variables \( \tilde{\tilde{\psi}}, \bar{\tilde{\psi}}, \tilde{\bar{\psi}}, \bar{\bar{\psi}} \). By the identity~\eqref{eq_taukDw2}, for \( k_1,k_2\in\mathbb{N} \), it has 
\begin{align}
 	\ri \omega \tau_{k_1}^1 \tau_{k_2}^2 D_{1,2} u_{\b0,\bj} =&~  \tau_{k_2}^2 D_2 \left( \ri \omega \tau_{k_1}^1 D_1 u_{\b0,\bj} \right) \notag\\
 	=&~ \tau_{k_2}^2 D_2 \Bigg( \frac{ (\rho_{k_1+2}^1 - \rho_{k_1}^1) u_{\b0,\bj} }{2h} + \frac{ (\rho_1^1 + \rho_0^1) (\sigma_{\bj}^1 \tau_{k_1}^1 D_1 u_{\b0,\bj}) }{2}  \notag\\
 	& + \sum_{l=1}^{k_1} \frac{ ( \rho_{l+1}^1 - \rho_{l-1}^1 ) ( \sigma_{\bj}^1 \tau_{k_1-l}^1 D_1 u_{\b0,\bj} ) }{2}  \Bigg) \notag\\
 	=&~ \frac{ (\rho_{k_1+2}^1 - \rho_{k_1}^1) \tau_{k_2}^2 D_2 u_{\b0,\bj} }{2h} + \frac{ (\rho_1^1 + \rho_0^1) (\sigma_{\bj}^1 \tau_{k_1}^1 \tau_{k_2}^2 D_{1,2} u_{\b0,\bj}) }{2}  \notag\\
 	& + \sum_{l=1}^{k_1} \frac{ ( \rho_{l+1}^1 - \rho_{l-1}^1 ) ( \sigma_{\bj}^1 \tau_{k_1-l}^1 \tau_{k_2}^2 D_{1,2} u_{\b0,\bj} ) }{2},
\end{align}
which implies that for \( k_1,k_2\in\bbz^+ \),
\begin{align}
 	\ri \omega \tilde{\tilde{\psi}}_\bj^{k_1,k_2} =&~ \frac{ (\rho_{k_1+1}^1 - \rho_{k_1-1}^1)  \tilde{\psi}_\bj^{2,k_2}  }{2h} + \frac{ (\rho_1^1 + \rho_0^1) (\sigma_{\bj}^1 \tilde{\tilde{\psi}}_\bj^{k_1,k_2}) }{2}  + \sum_{l=1}^{k_1-1} \frac{ ( \rho_{l+1}^1 - \rho_{l-1}^1 ) ( \sigma_{\bj}^1 \tilde{\tilde{\psi}}_\bj^{k_1-l,k_2} ) }{2}. 
\end{align}
Replacing \( \rho^1 ,\bj \) by \( \tau^1,\bi \), respectively, and transforming all variables into the time domain, we derive the PML equation~\eqref{eq_disPMLmaineq3}.
% We can prove the PML equation~\eqref{eq_disPMLmaineq4} by combining \( \ri \omega \tau_{-k_1}^1 \tau_{k_2}^2 D_{1,2} u_{\b0,\bj} = \tau_{k_2}^2 D_2 (\ri \omega \tau_{-k_1}^1 D_1 u_{\b0,\bj} )  \) and the identity~\eqref{eq_taukDw2}, the PML equation~\eqref{eq_disPMLmaineq5} by combining \( \ri \omega \tau_{k_1}^1 \tau_{-k_2}^2 D_{1,2} u_{\b0,\bj} = \tau_{-k_2}^2 D_2 (\ri \omega \tau_{k_1}^1 D_1 u_{\b0,\bj} ) \) and the identity~\eqref{eq_tdw2}, \clr{\eqref{eq_disPMLmaineq6}} by combining \( \ri \omega \tau_{-k_1}^1 \tau_{-k_2}^2 D_{1,2} u_{\b0,\bj} = \tau_{-k_2}^2 D_2 (\ri \omega \tau_{-k_1}^1 D_1 u_{\b0,\bj} ) \) and the identity~\eqref{eq_tdw2}. 
As the proofs for PML equations \eqref{eq_disPMLmaineq4}, \eqref{eq_disPMLmaineq5}, and \eqref{eq_disPMLmaineq6} are similar, we omit the details for brevity.

%The derivation guarantees that 
%\begin{align}
% 	 \left(\prod_{\alpha=1}^2 \eta(i_\alpha,\sigma^\alpha,\kappa_\alpha^h,\omega)\right) e^{\ri \bka^h\cdot \bi h},\quad \mbox{with}\ \eta(i_\alpha,\sigma^\alpha,k_\alpha,\omega)=  \prod_{l=0}^{i_\alpha-1} \frac{ 2+\ri\frac{\sigma^\alpha_l}{\omega}(1-e^{-\ri \kappa_\alpha^h h}) }{ 2+\ri\frac{\sigma^\alpha_l}{\omega}(1-e^{\ri \kappa_\alpha^h h}) }
%\end{align}
%is the solution of the discrete PML equations~\eqref{eq_disPMLmaineq}--\eqref{eq_disPMLmaineq6}. This means that the plane (or evanescent) waves of the discrete wave equation~\eqref{eq_diswaveeq} do not reflect back to disrupt the solution in the physical domain as they travel into the PML layer. By \eqref{eq_estdampcoe} the wave amplitude decays exponentially along the positive \(  \mathrm{sgn}\omega \Re\kappa_\alpha^h \in (0, \pi/h) \) direction in the layer.

The derivation implies that the following theorem holds.
\begin{theorem}	\label{th_main2}
	Let \( u(t,\bi) = e^{-\ri\omega t} \prod_{\alpha=1}^2 e^{\ri \kappa_\alpha^h i_\alpha h}  \) ( \( \kappa_\alpha^h\in\mathcal{K} \), \( \omega\neq0\in\mathbb{R} \) ) be a discrete traveling wave that satisfies the wave equation \eqref{eq_diswaveeq}. Then the extension of this wave in the PML domain, which is given by
	\begin{align*}
 	 	u_{\bi}(t) = e^{-\ri\omega t} \prod_{\alpha=1}^2 \eta(i_\alpha,\sigma^\alpha,\kappa_\alpha^h,\omega) e^{\ri \kappa_\alpha^h i_\alpha h}\ \mbox{with}\ \eta(i_\alpha,\sigma^\alpha,\kappa_\alpha^h,\omega)=  \prod_{l=0}^{i_\alpha-1} \frac{ 2+\ri\frac{\sigma^\alpha_l}{\omega}(1-e^{-\ri \kappa_\alpha^h h}) }{ 2+\ri\frac{\sigma^\alpha_l}{\omega}(1-e^{\ri \kappa_\alpha^h h}) },
	\end{align*}
	is the solution of the discrete PML equations~\eqref{eq_disPMLmaineq}--\eqref{eq_disPMLmaineq6}.
%	 When \( \omega\in\mathbb{R}^+, \sigma^\alpha>0, \kappa_\alpha^h\in\mathcal{K} \), it holds that
%	\begin{align}
% 		\left| \frac{ 2+\ri\frac{\sigma^\alpha_l}{\omega}(1-e^{-\ri \kappa_\alpha^h h}) }{ 2+\ri\frac{\sigma^\alpha_l}{\omega}(1-e^{\ri \kappa_\alpha^h h}) } \right| < 1. \label{eq_estdampcoe2}
%	\end{align}
\end{theorem}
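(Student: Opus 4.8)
The plan is to recognize this theorem as the exact converse of the derivation carried out in Section~\ref{subsec_dodpe}, and to prove it by substituting the proposed extension directly into each equation of the PML system~\eqref{eq_disPMLmaineq}--\eqref{eq_disPMLmaineq6} and checking that every identity holds. Since the extension carries the factor $e^{-\ri\omega t}$, I would first pass to the frequency domain, where the temporal multiplier $\partial_t$ corresponds to $-\ri\omega$ and the PML system becomes a set of purely algebraic relations among shifted lattice functions. The spatial part of $u_\bi(t)$, namely $\prod_{\alpha=1}^2 \eta(i_\alpha,\sigma^\alpha,\kappa_\alpha^h,\omega)e^{\ri\kappa_\alpha^h i_\alpha h}$, is precisely the restriction $u_{\b0,\bj}$ of the two-index holomorphic extension $u_{\bi,\bj}$ in~\eqref{eq_disanacondiswave2} evaluated at $\bj=\bi$. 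By Lemma~\ref{th_main1}, this $u_{\bi,\bj}$ is discrete-holomorphic on $(\Lambda,z^\alpha)$ for each $\alpha$ and still solves the semi-discrete wave equation~\eqref{eq_diswaveeq}, which is exactly the hypothesis required to invoke Propositions~\ref{pro_tauw}--\ref{pro_tildepsi}.

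The key structural step is to identify the auxiliary variables with shifted discrete complex derivatives of this holomorphic extension. Concretely, I would set $\tilde{\psi}_\bj^{\alpha,k+1} = \tau_k^\alpha D_\alpha u_{\b0,\bj}$ and $\bar{\psi}_\bj^{\alpha,k} = \tau_{-k}^\alpha D_\alpha u_{\b0,\bj}$ for the axial variables, and take the four corner variables $\tilde{\tilde{\psi}},\tilde{\bar{\psi}},\bar{\tilde{\psi}},\bar{\bar{\psi}}$ to be the corresponding $\tau_{\pm k_1}^1\tau_{\pm k_2}^2 D_{1,2}u_{\b0,\bj}$, where the symmetry $D_{1,2}=D_{2,1}$ of Proposition~\ref{th_da12} guarantees consistency. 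With this dictionary in hand, the axial evolution equations~\eqref{eq_disPMLmaineq1}--\eqref{eq_disPMLmaineq2} are nothing but the recurrences~\eqref{eq_tdw1}--\eqref{eq_tdw2} of Proposition~\ref{pro_tildepsi}, and the corner equations~\eqref{eq_disPMLmaineq3}--\eqref{eq_disPMLmaineq6} follow by applying $D_2$ (respectively $D_1$) to those same identities, exactly as in the derivation; one then relabels $\rho\to\tau$, $\bj\to\bi$ and returns the frequency multiplier $\ri\omega$ to $-\partial_t$ to recover the stated time-domain form.

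For the governing equation~\eqref{eq_disPMLmaineq} itself, I would substitute $u_{\b0,\bj}$ into the frequency-domain wave equation~\eqref{eq_disfou2}, split $\mathcal{L}_h$ into its axial contributions $a_k(\tau_k^\alpha+\tau_{-k}^\alpha)$ and its corner contributions $a_\bk(\tau_{k_1}^1\tau_{k_2}^2+\cdots)$, and convert each $\tau$-shift in $\bi$ into a $\rho$-shift in $\bj$ plus correction terms via Proposition~\ref{pro_tauw} for the axial pairs and Proposition~\ref{pro_tautauu} for the four-term corner combinations. Because $u_{\b0,\bj}$ solves~\eqref{eq_diswaveeq} in the $\bj$ variable, the leading $\rho$-shifted terms reassemble into $\sum_\bk a_\bk u_{\b0,\bj+\bk}$, and collecting the remainder yields precisely~\eqref{eq_dermaineqfin}; the final relabeling and inverse transform then produce~\eqref{eq_disPMLmaineq}. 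Since $\sigma^\alpha$ vanishes in the physical region the extension reduces to the original traveling wave there, while the estimate~\eqref{eq_estdampcoe} forces exponential decay in the PML region, so the constructed solution is well-defined throughout.

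The main obstacle will be the two-dimensional corner bookkeeping rather than any conceptual difficulty: one must track the four sign-combinations of the mixed-derivative variables, match the summation ranges (e.g.\ the offset between $1\leq l\leq k_\alpha$ in~\eqref{eq_tmkmk1}--\eqref{eq_tmkmk3} and $0\leq l\leq k_\alpha-1$ in~\eqref{eq_disPMLmaineq}), and keep the index shift between the definition $\tilde{\psi}^{\alpha,k+1}=\tau_k^\alpha D_\alpha u_{\b0,\bj}$ and the summation bounds consistent throughout. All of this is purely algebraic and rests entirely on the identities already established in Propositions~\ref{pro_tauw}--\ref{pro_tildepsi}, so the proof amounts to careful accounting rather than the introduction of any new idea.
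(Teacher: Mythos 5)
Your proposal is correct and takes essentially the same route as the paper: the paper's proof of Theorem~\ref{th_main2} is precisely the derivation in Section~\ref{subsec_dodpe}, which, exactly as you describe, passes to the frequency domain, identifies the auxiliary variables with the shifted discrete complex derivatives $\tau_{\pm k}^\alpha D_\alpha u_{\b0,\bj}$ and $\tau_{\pm k_1}^1\tau_{\pm k_2}^2 D_{1,2}u_{\b0,\bj}$ of the holomorphic extension from Lemma~\ref{th_main1}, converts $\tau$-shifts to $\rho$-shifts via Propositions~\ref{pro_tauw}--\ref{pro_tildepsi}, and relabels $\rho\to\tau$, $\bj\to\bi$ before inverting the time transform. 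One minor imprecision: the wave equation is satisfied by $u_{\bi,\bj}$ in the first (translated) index $\bi$ rather than ``in the $\bj$ variable''---the $\rho$-shifted terms reassemble into $\sum_{\bk}a_{\bk}u_{\b0,\bj+\bk}$ purely by the definition of $\rho$---but this does not affect the validity of your argument.
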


This means that the plane (or evanescent) waves of discrete wave equation~\eqref{eq_diswaveeq} do not reflect back to disrupt the solution in the physical domain as they travel into the PML layer. By \eqref{eq_estdampcoe} the wave amplitude decays exponentially along the positive \(  \mathrm{sgn}\omega \Re\kappa_\alpha^h \in (0, \pi/h) \) direction in the PML layer.

\section{Implement}  \label{sec_implement}
Practical implementations require a discretization scheme in time based on the sparsity of the system matrices and designing  to reduce numerical complexity. 

We truncate the PML into a finite thickness at \( i_\alpha=n_p \) and apply the homogeneous Dirichlet boundary on the linear system~\eqref{eq_disPMLmaineq}--\eqref{eq_disPMLmaineq6}, that is, \( u_\bi \) and auxiliary variables are set to be zero when \( i_\alpha> n_p \). This may give rise to a residual wave with wave number \( -\bar{\kappa}^h_\alpha=-\Re(\kappa^h_\alpha)+\ri \Im(\kappa_\alpha^h) \) which re-enters the layer along the negative \(  -\mathrm{sgn}\omega \Re\kappa_\alpha^h \) direction. By the time the reflected wave propagates through the wave layer and reaches the physical domain, its amplitude has been suppressed by the cumulative amount of \( |\eta(j_\alpha,\sigma^\alpha,\kappa_\alpha^h,\omega)||\eta(j_\alpha,\sigma^\alpha,-\bar{\kappa}_\alpha^h,\omega)| \) which is damped exponentially when \( \omega\in\mathbb{R}, \sigma^\alpha>0, \kappa_\alpha^h\in\mathcal{K} \) (cf. \cite[Theorem 3.2]{chern2019reflectionless}).

For the sake of conciseness, we assume that the coefficents \( a_\bk \) may not be zero only if \( |k_1|\leq p \) and \( |k_2|\leq p \). The linear system~\eqref{eq_disPMLmaineq}--\eqref{eq_disPMLmaineq6} can be  be written as
\begin{align}
 	\partial_t^2 U =&~ \mathcal{M}_0 U + \sum_{\alpha=1}^2 \sum_{k=1}^p \left( \mathcal{A}_0^{\alpha,k} \tilde{\Psi}^{\alpha,k} + \mathcal{B}_0^{\alpha,k} \bar{\Psi}^{\alpha,k} \right) \notag\\
 	& + \sum_{k_1=1}^p \sum_{k_2=1}^p \left( \mathcal{C}_0^{k_1,k_2} \tilde{\tilde{\Psi}}^{k_1,k_2} + \mathcal{D}_0^{k_1,k_2} \tilde{\bar{\Psi}}^{k_1,k_2} + \mathcal{E}_0^{k_1,k_2} \bar{\tilde{\Psi}}^{k_1,k_2} + \mathcal{F}_0^{k_1,k_2} \bar{\bar{\Psi}}^{k_1,k_2} \right), \label{eq_matrixeq0}\\
 	\partial_t \tilde{\Psi}^{\alpha,k} =&~ \mathcal{M}_1^{\alpha,k} U + \mathcal{A}_1^{\alpha,k} \tilde{\Psi}^{\alpha,k} + \sum_{l=1}^{k-1} A_1^{\alpha,l} \tilde{\Psi}^{\alpha,l},\quad \alpha=1,2,k=1,2,\cdots,p, \label{eq_matrixeq1}\\
 	\partial_t \bar{\Psi}^{\alpha,k} =&~ \mathcal{M}_2^{\alpha,k} U + \mathcal{B}_2^{\alpha,k} \bar{\Psi}^{\alpha,k} + \sum_{l=1}^{k-1} B_2^{\alpha,l} \tilde{\Psi}^{\alpha,l}, \quad \alpha=1,2,k=1,2,\cdots,p,\label{eq_matrixeq2} \\
 	\partial_t \tilde{\tilde{\Psi}}^\bk =&~ \mathcal{A}_3^{k_1} \tilde{\Psi}^{2,k_2} + \mathcal{C}_3^0 \tilde{\tilde{\Psi}}^\bk + \sum_{l=1}^{k_1-1} \mathcal{C}_3^l \tilde{\tilde{\Psi}}^{k_1-l,k_2},\quad 1\leq k_1,k_2 \leq p, \label{eq_matrixeq3}\\
 	\partial_t \tilde{\bar{\Psi}}^\bk =&~ \mathcal{A}_4^{k_1} \tilde{\Psi}^{2,k_2} + \mathcal{D}_4^0 \tilde{\bar{\Psi}}^\bk + \sum_{l=1}^{k_1-1} \mathcal{D}_4^l \tilde{\bar{\Psi}}^{k_1-l,k_2}, \quad 1\leq k_1,k_2 \leq p, \label{eq_matrixeq4}\\
 	\partial_t \bar{\tilde{\Psi}}^\bk =&~ \mathcal{B}_5^{k_1} \bar{\Psi}^{2,k_2} + \mathcal{E}_5^0 \bar{\tilde{\Psi}}^\bk + \sum_{l=1}^{k_1-1} \mathcal{E}_5^l \bar{\tilde{\Psi}}^{k_1-l,k_2}, \quad 1\leq k_1,k_2 \leq p, \label{eq_matrixeq5}\\
 	\partial_t \bar{\bar{\Psi}}^\bk =&~ \mathcal{B}_6^{k_1} \bar{\Psi}^{2,k_2} + \mathcal{F}_6^0 \bar{\bar{\Psi}}^\bk + \sum_{l=1}^{k_1-1} \mathcal{F}_6^l \bar{\bar{\Psi}}^{k_1-l,k_2},\quad 1\leq k_1,k_2 \leq p, \label{eq_matrixeq6}
\end{align}
where the column vectors \( U,\tilde{\Psi}^{\alpha,k},\bar{\Psi}^{\alpha,k},\tilde{\tilde{\Psi}}^\bk,\tilde{\bar{\Psi}}^\bk,\bar{\tilde{\Psi}}^\bk,\bar{\bar{\Psi}}^\bk \) are generated by the degrees of freedom of elements \( u_\bi, \tilde{\psi}^{\alpha,k}_\bi,\bar{\psi}^{\alpha,k}_\bi,\tilde{\tilde{\psi}}^\bk_\bi,\tilde{\bar{\psi}}^\bk_\bi,\bar{\tilde{\psi}}^\bk_\bi,\bar{\bar{\psi}}^\bk_\bi \), respectively. The order of elements allocated into the column vector follows the order of \( i_2 \) column, and then moving the next \( i_2+1 \) column. Note that the particular ordering of the nodes affects the matrix structure corresponding to the discrete system but does not affect the numerical solution.

We remark again that the auxiliary variables \( \tilde{\Psi}^{\alpha,k} \) (resp. \( \bar{\Psi}^{\alpha,k} \)) and their evolution equations~\eqref{eq_matrixeq1} (resp. \eqref{eq_matrixeq2}) only need to be supported on the PML layer \( \{ \ 0\leq i_\alpha\leq n_p \} \). The auxiliary variables \( \tilde{\tilde{\Psi}}^\bk \) (resp. \( \tilde{\bar{\Psi}}^\bk,\bar{\tilde{\Psi}}^\bk,\bar{\bar{\Psi}}^\bk \)) and their evolution equations~\eqref{eq_matrixeq3} (resp. \eqref{eq_matrixeq4}--\eqref{eq_matrixeq6}) are only on the PML corner \( \{ \ 0\leq i_1\leq n_p \} \cap \{\ 0\leq i_2\leq n_p \} \)

The coefficient matrices \( \mathcal{M},\mathcal{A},\cdots,\mathcal{F} \) can be inferred from \eqref{eq_disPMLmaineq}--\eqref{eq_disPMLmaineq6}. It can be observed that
\begin{itemize}
	\item \( \mathcal{A}_3^{k_1} = \mathcal{B}_5^{k_1} \), \( \mathcal{A}_4^{k_1} = \mathcal{B}_6^{k_1} \), \( \mathcal{C}_3^l=\mathcal{E}_5^l \) and \( \mathcal{D}_4^l=\mathcal{F}_6^l \);
	\item  \( \mathcal{A}_1^{\alpha,k} \) and \( \mathcal{C}_3^0 = \mathcal{E}_5^0 \) are upper triangular matrices;
	\item \( \mathcal{B}_2^{\alpha,k} \) and \( \mathcal{D}_4^0 = \mathcal{F}_6^0  \) are lower triangular matrices.
\end{itemize}
If we introduce vectors
\begin{align*}
 	\tilde{\Psi} =&~ [ \tilde{\Psi}^{1,p}; \tilde{\Psi}^{1,p-1}; \cdots ; \tilde{\Psi}^{1,1}; \cdots; \tilde{\Psi}^{2,p}; \tilde{\Psi}^{2,p-1}; \cdots ; \tilde{\Psi}^{2,1} ], \\
 	\bar{\Psi} =&~ [ \bar{\Psi}^{1,1}; \bar{\Psi}^{1,2}; \cdots ; \bar{\Psi}^{1,p}; \cdots; \bar{\Psi}^{2,1}; \bar{\Psi}^{2,2}; \cdots ; \bar{\Psi}^{2,p} ],\\
 	\tilde{\tilde{\Psi}} =&~ [ \tilde{\tilde{\Psi}}^{p,1}; \tilde{\tilde{\Psi}}^{p-1,1}; \cdots; \tilde{\tilde{\Psi}}^{1,1};\cdots;\tilde{\tilde{\Psi}}^{p,2}; \tilde{\tilde{\Psi}}^{p-1,2}; \cdots; \tilde{\tilde{\Psi}}^{1,2};\cdots;\tilde{\tilde{\Psi}}^{p,p}; \tilde{\tilde{\Psi}}^{p-1,p}; \cdots; \tilde{\tilde{\Psi}}^{1,p}], \\
 	\tilde{\bar{\Psi}} =&~ [ \tilde{\bar{\Psi}}^{1,1}; \tilde{\bar{\Psi}}^{1,2}; \cdots; \tilde{\bar{\Psi}}^{1,p};\cdots;\tilde{\bar{\Psi}}^{2,1}; \tilde{\bar{\Psi}}^{2,2}; \cdots; \tilde{\bar{\Psi}}^{2,p};\cdots;\tilde{\bar{\Psi}}^{p,1}; \tilde{\bar{\Psi}}^{p,2}; \cdots; \tilde{\bar{\Psi}}^{p,p}], \\
 	\bar{\tilde{\Psi}} =&~ [ \bar{\tilde{\Psi}}^{p,1}; \bar{\tilde{\Psi}}^{p-1,1}; \cdots; \bar{\tilde{\Psi}}^{1,1};\cdots;\bar{\tilde{\Psi}}^{p,2}; \bar{\tilde{\Psi}}^{p-1,2}; \cdots; \bar{\tilde{\Psi}}^{1,2};\cdots;\bar{\tilde{\Psi}}^{p,p}; \bar{\tilde{\Psi}}^{p-1,p}; \cdots; \bar{\tilde{\Psi}}^{1,p}], \\
 	\bar{\bar{\Psi}} =&~ [ \bar{\bar{\Psi}}^{1,1}; \bar{\bar{\Psi}}^{1,2}; \cdots; \bar{\bar{\Psi}}^{1,p};\cdots;\bar{\bar{\Psi}}^{2,1}; \bar{\bar{\Psi}}^{2,2}; \cdots; \bar{\bar{\Psi}}^{2,p};\cdots;\bar{\bar{\Psi}}^{p,1}; \bar{\bar{\Psi}}^{p,2}; \cdots; \bar{\bar{\Psi}}^{p,p}].
\end{align*}
\eqref{eq_matrixeq0}--\eqref{eq_matrixeq6} can be expressed in a further compact form
\begin{align*}
 	\partial_t^2 U =&~ \mathcal{M}_0 U +  \mathcal{A}_0 \tilde{\Psi} + \mathcal{B}_0 \bar{\Psi} + \mathcal{C}_0 \tilde{\tilde{\Psi}} + \mathcal{D}_0 \tilde{\bar{\Psi}} + \mathcal{E}_0 \bar{\tilde{\Psi}} + \mathcal{F}_0 \bar{\bar{\Psi}},\\
 	\partial_t \tilde{\Psi} =&~ \mathcal{M}_1 U + \mathcal{A}_1 \tilde{\Psi},\quad \partial_t \bar{\Psi} = \mathcal{M}_2 U + \mathcal{B}_2 \bar{\Psi},\\
 	\partial_t \tilde{\tilde{\Psi}} =&~ \mathcal{A}_3 \tilde{\Psi}^{2} + \mathcal{C}_3 \tilde{\tilde{\Psi}},\quad \partial_t \tilde{\bar{\Psi}} = \mathcal{A}_4 \tilde{\Psi}^{2} + \mathcal{D}_4 \tilde{\bar{\Psi}}, \\
 	\partial_t \bar{\tilde{\Psi}} =&~ \mathcal{B}_5 \bar{\Psi}^{2} + \mathcal{E}_5 \bar{\tilde{\Psi}},\quad 	\partial_t \bar{\bar{\Psi}} = \mathcal{B}_6 \bar{\Psi}^{2} + \mathcal{F}_6 \bar{\bar{\Psi}}.
\end{align*}
Because \( \mathcal{A}_1 \) and \( \mathcal{C}_3=\mathcal{E}_5 \) are upper triangular matrices, and \( \mathcal{B}_2 \) and \( \mathcal{D}_4=\mathcal{F}_6 \) are upper triangular matrices, we use the explicit Verlet-type algorithm to solve the semi-discrete linear system efficiently. We omit the details to save space.

\section{Numerial tests} \label{sec_num}
In this section, we demonstrate the effectiveness and efficiency of the discrete PML for the PD scalar wave-type equation. In the following examples, the physical domain is chosen as \( (-1,1)\times(-1,1) \). Denote by $h$ the mesh size and by \( \Omega=\{\bi\in\bbz^2|\ |i_1|\leq n,\ |i_2|\leq n\} \)  with $n=1/h$. We simply set the layer thickness as \( 4h \) (i.e. \( n_p=4 \)) to minimize the number of auxiliary functions and reduce extra computational cost. Initial conditions are given as a pulse, i.e., \( u(\bx,0) = e^{-40|\bx|^2}, \partial_t u(\bx,0) = 0 \).

\textbf{Example 1.}
We first consider the Gaussian kernel
\begin{align}
 	\gamma(\bx-\bx') = \frac{4}{\pi\epsilon^4} e^{-\frac{|\bx-\bx'|^2}{\epsilon^2}}.
\end{align}
This kernel was simulated in \cite{DuHanZhangZheng,wildman2012a,du2018nonlocal} to show the eﬃciency of nonlocal ABCs. The Gaussian kernel does not have a strict bond \( \delta \), but exponentially decays to zero. We set the cutoff as \( 10^{-7} \) such that \( \delta=\frac14 \). In this situation, the kernel is smooth and does not result in a large number of bonds. 

At first, it is valuable to investigate how to choose an economic layer thickness and suitable PML coefficients \( \sigma_l^\alpha \) for the computational decision, and then present a practical advice on the choice of \( \sigma_l^\alpha \). To the end, it is straightforward to verify that in the asymptotical assumption \( |\kappa_\alpha^h h|<1 \), it holds
\begin{align}
	\left| \frac{ 2+\ri\frac{\sigma^\alpha_l}{\omega}(1-e^{-\ri \kappa_\alpha^h h}) }{ 2+\ri\frac{\sigma^\alpha_l}{\omega}(1-e^{\ri \kappa_\alpha^h h}) } \right| = \left| \frac{ 2 - \frac{\kappa_\alpha^h}{\omega} \sigma_l^\alpha h ( 1 - \frac{\ri \kappa_\alpha^h h}{2!} + \frac{(\ri \kappa_\alpha^h h)^2}{3!} + \cdots) }{ 2 + \frac{\kappa_\alpha^h}{\omega} \sigma_l^\alpha h ( 1 + \frac{\ri \kappa_\alpha^h h}{2!} + \frac{(\ri \kappa_\alpha^h h)^2}{3!} + \cdots)  } \right| \leq \left| \frac{ 2 - c_1|\frac{\kappa_\alpha^h}{\omega}| \sigma_l^\alpha h  }{ 2 + c_2|\frac{\kappa_\alpha^h}{\omega}| \sigma_l^\alpha h } \right|,
\end{align}
where \( c_1 \) and \( c_2 \) are positive constants depending on \( \kappa_\alpha^h h \). Therefore,  the asymptotical decay rate only depends  on \( \sigma_l^\alpha h \) for a fixed incident angle \( \cos^{-1} (\Re\kappa_\alpha^h/\omega) \). Consequently, we  can choose the PML coefficients as \( \sigma_l^\alpha = O(1/h) \) when \( |l|\geq n \). We now investigate how to choose the value of \( \sigma_l^\alpha h \) for a good performance by setting \( \sigma^\alpha_l=\sigma_0 \) for all \( \alpha=1,2 \), and denote the decay rate of PML  by
\[
	\mu(\bka^h,\sigma_0)= \frac{2+\ri\frac{\sigma_0}{\omega}(1-e^{-\ri \kappa_\alpha^h h})}{2+\ri\frac{\sigma_0}{\omega}(1-e^{\ri \kappa_\alpha^h h})}.
\]
Fig.~\ref{fig_deamprate} illustrates the decay rate \( \mu(\bka^h,\sigma_0) \) with respect to \( \kappa_1^h \) and \( \sigma_0 \) for fixed \( \kappa_2^h=10,20,40 \). The left graph of Fig.~\ref{fig_damprate2} shows the decay rate \( \mu(\bka^h,\sigma_0) \) with respect to \( \sigma_0 \) for different wave number \( \bka^h \). These results show that \( \sigma_0 \) can be chosen between \( (1/h,5/h) \) in order to minimize \( |\mu(\bka^h,\sigma_0)| \). Furthermore, we consider the numerical reflection measured directly by the re-entering residual wave of max norm $|U-U_{\mathrm{ref}}|_\infty$ in the physical domain against a reference solution $U_{\mathrm{ref}}$ solving discrete wave equation~\eqref{eq_diswaveeq} over a larger domain \( (-8,8)\times(-8,8) \). The right graph of Fig.~\ref{fig_damprate2} plots the max amplitude of the re-entering residual wave, showing that the setup with \( \sigma_0=2/h \) gives a better performance for suppressing the residual wave.

\begin{figure}
	\centering
 	\includegraphics[width=0.3\textwidth]{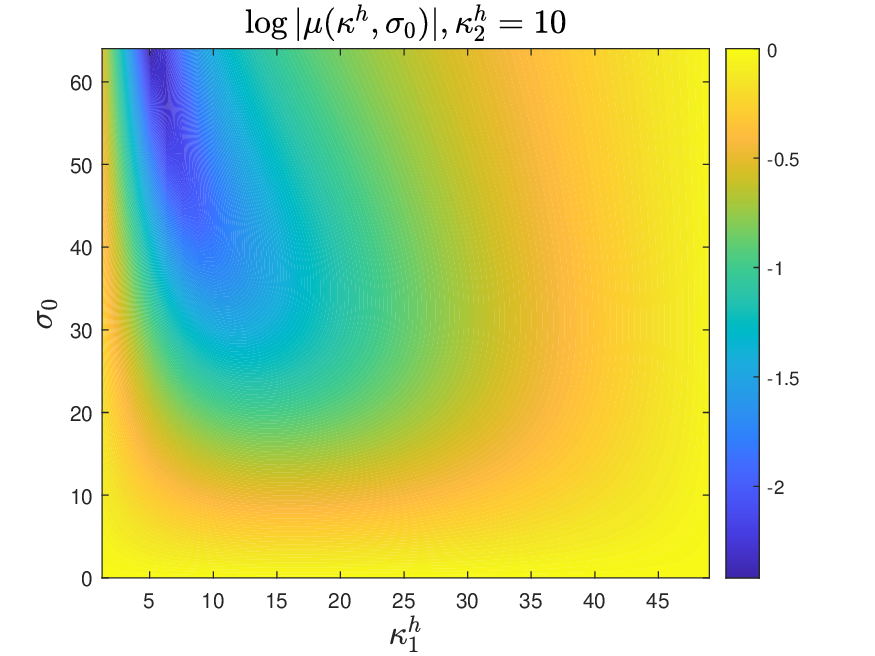}
 	\includegraphics[width=0.3\textwidth]{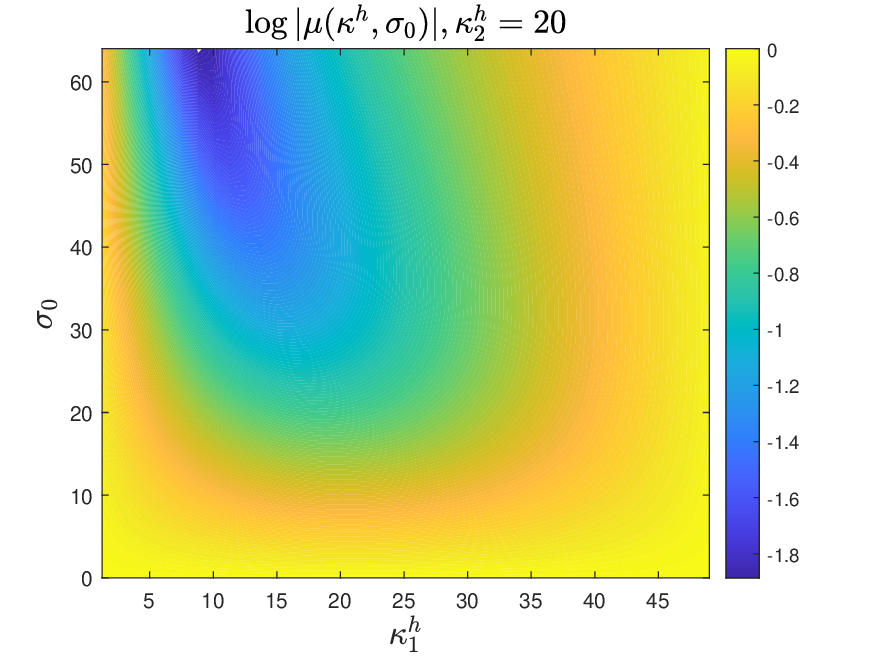}
 	\includegraphics[width=0.3\textwidth]{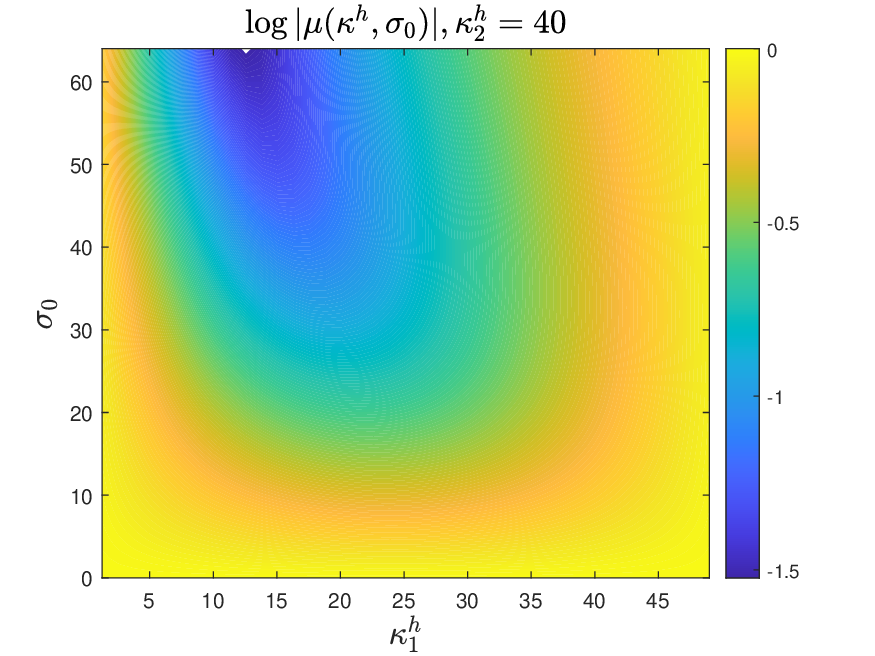}
 	\caption{(Example 1.) The decay rate \( \mu(\bka^h,\sigma_0) \) as a function of \( \sigma_0 \) and \( \kappa_1^h \) for fixed \( \kappa_2^h\). The mesh size is \( h=1/16 \) and the horizon of the kernel is \(\delta=1/4 \). } \label{fig_deamprate}
\end{figure}

\begin{figure}
	\centering
 	\includegraphics[width=0.4\textwidth]{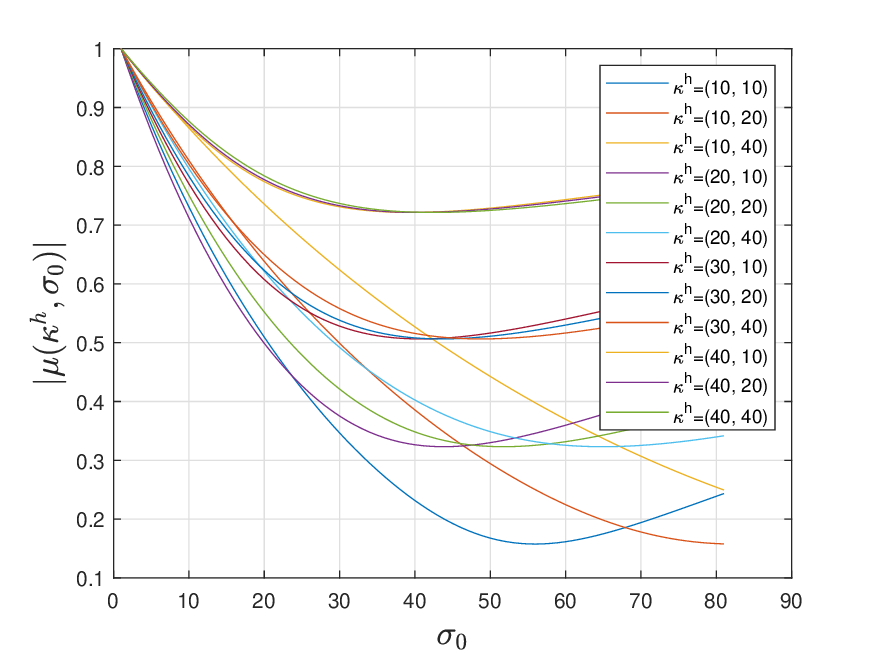}
 	\includegraphics[width=0.4\textwidth]{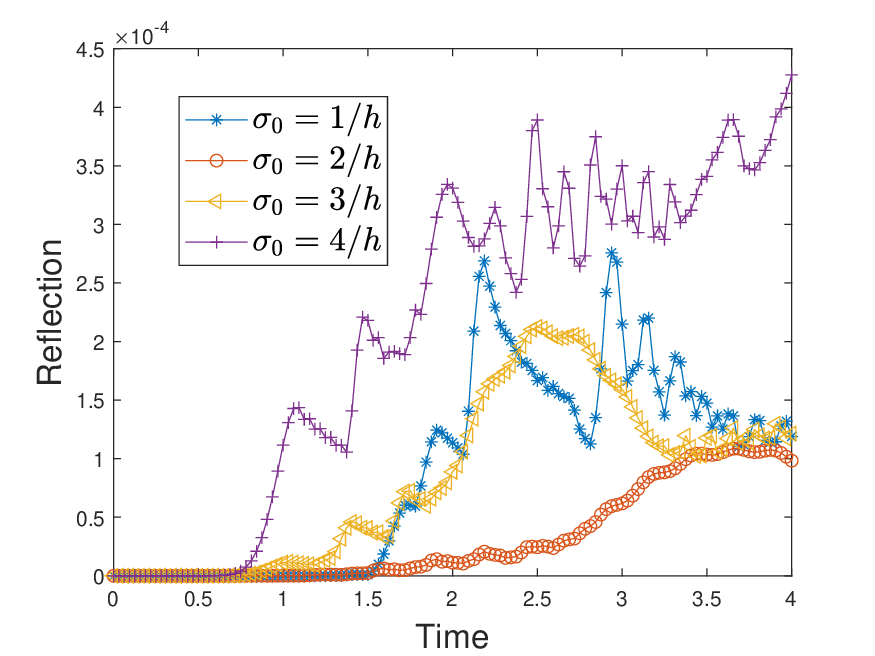}
 	\caption{(Example 1.) Left: The decay rate \( \mu(\bka^h,\sigma_0) \) as a function of \( \sigma_0 \) for different \( \bka^h \). Right: The reflection error \( \max|U-U_{\mathrm{ref}}| \) over \( \Omega \) with mesh size 	\( h=1/16 \). \( U \) and \( U_{\mathrm{ref}} \) are solutions to the problem with horizon \( \delta=1/4 \).} \label{fig_damprate2}
\end{figure}

Fig.~\ref{fig_ex1solutions} presents simulation results at different time instances. The whole computational domain is discretized with mesh size \( h=1/16 \) and time step \( \Delta t = h/32 \). The reference solution is computed over a larger spatial
domain \( (-16,16)\times(-16,16) \) to avoid any boundary effect with a refined mesh, say \( h=1/64 \) and \( \Delta t=h^2 \). It is observed that the PML solution is in good agreement with the reference solution in the physical domain. Waves traveling into the PML are allowed to be transmitted out without any significant reflections back into the computational domain, and are damped exponentially.

To show the accuracy of the quadrature-based finite difference scheme, Fig.~\ref{fig_error} presents the errors in both $l^2$-norm and max-norm between the PML solution and the reference solution in the physical domain at $t=1.5$. The reference solutions are obtained by mesh size \( h=1/64 \). This suggests a convergence rate of at least $2$.

\begin{figure}
	\centering
 	\includegraphics[width=0.3\textwidth]{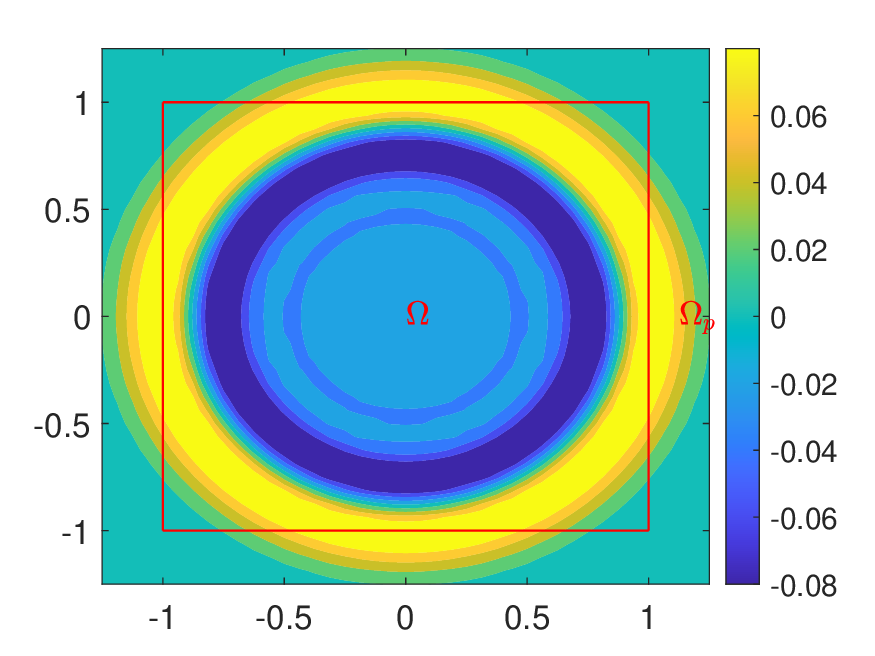}
 	\includegraphics[width=0.3\textwidth]{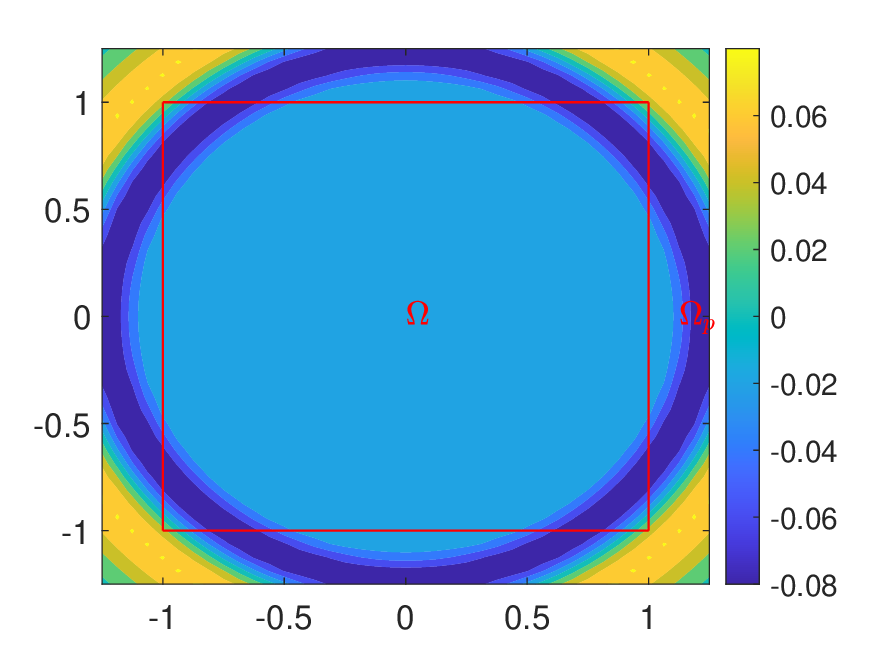}
 	\includegraphics[width=0.3\textwidth]{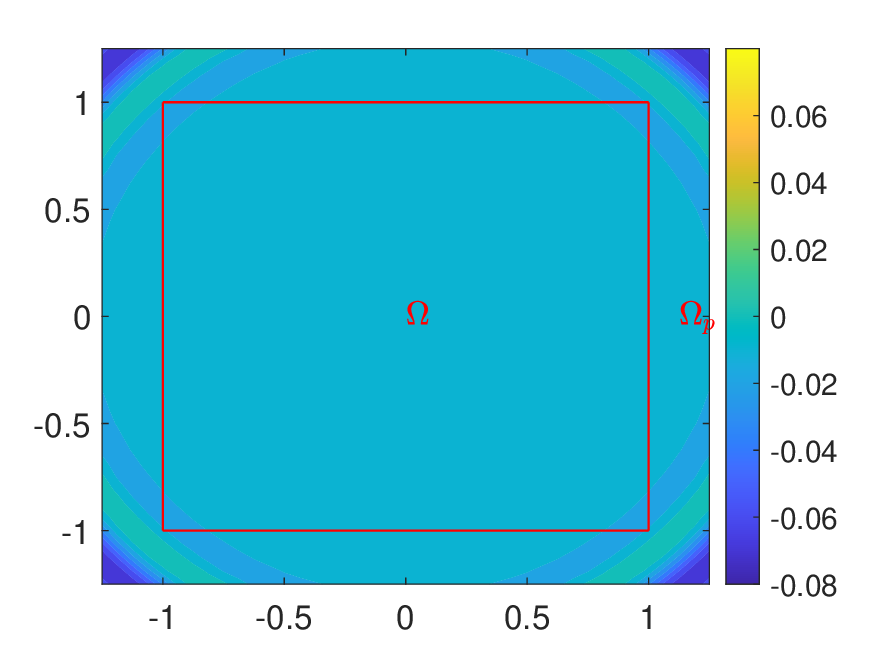}
 	\includegraphics[width=0.3\textwidth]{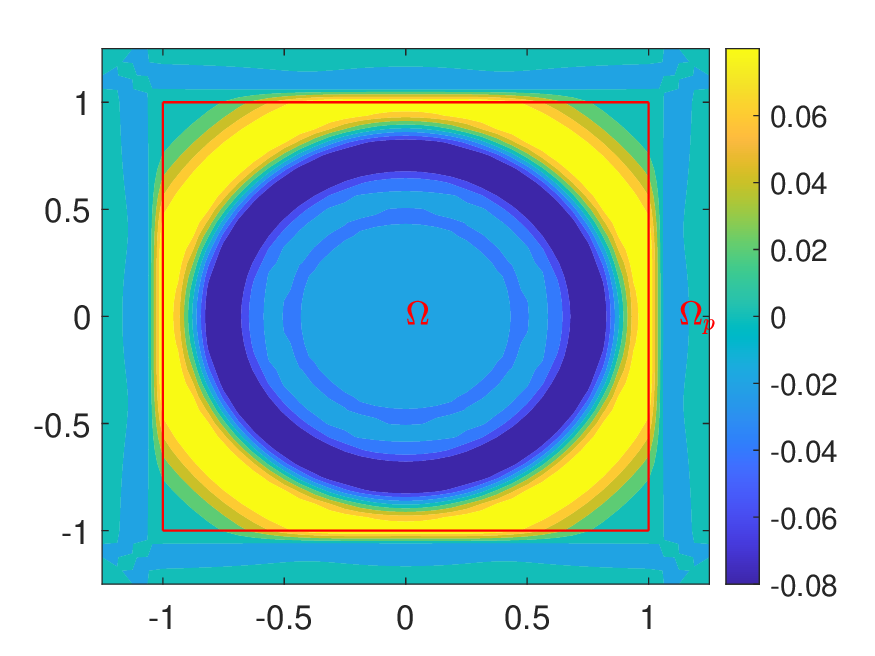}
 	\includegraphics[width=0.3\textwidth]{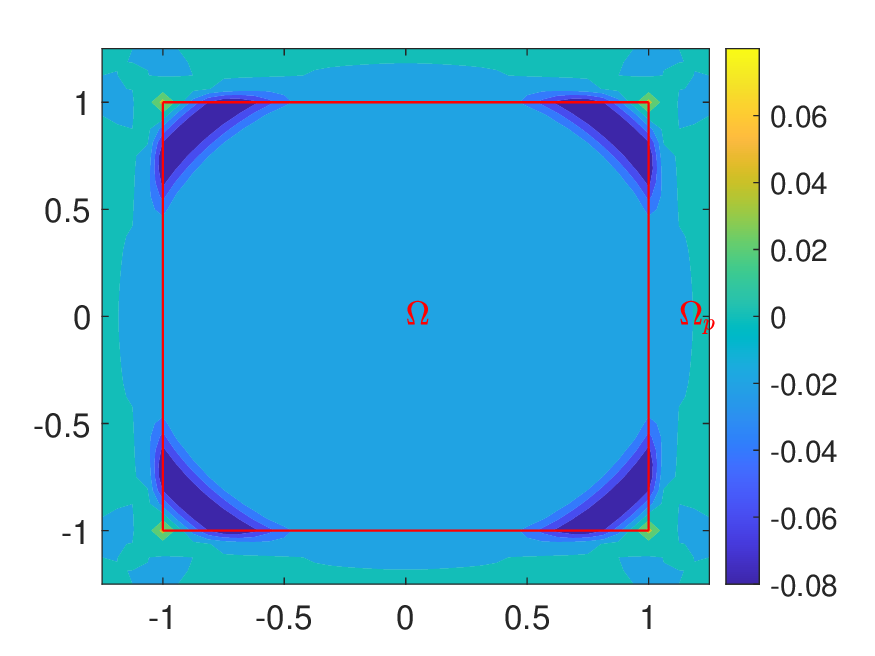}
 	\includegraphics[width=0.3\textwidth]{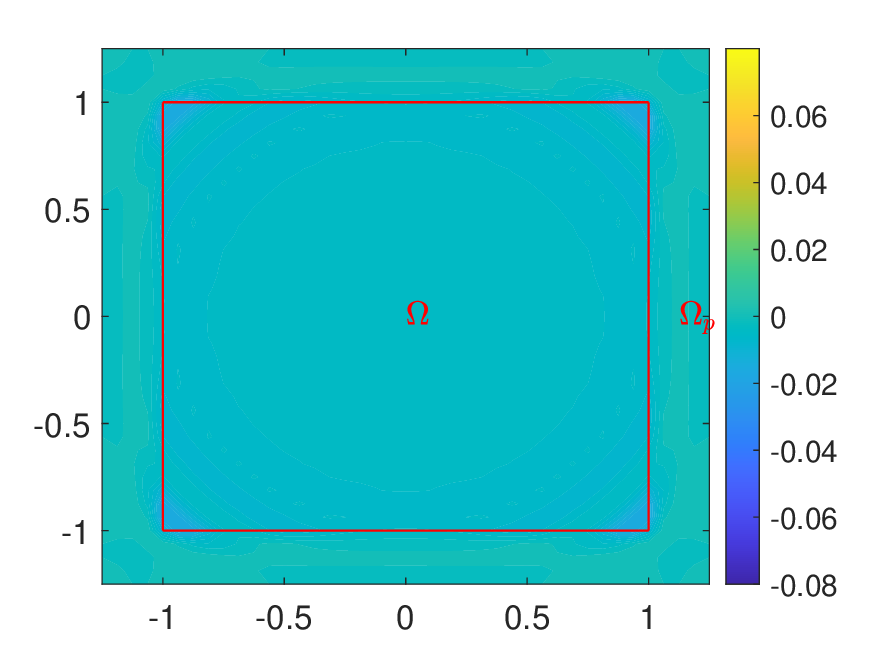}
 	\caption{(Example 1.) Waves in the PML (shown as grids) are damped exponentially. The solution in the near field at different time instants. Top, from left to right: reference solutions at \( t=1,1.5,2 \). Bottom, from left to right: PML solutions at \( t=1,1.5,2 \).} \label{fig_ex1solutions}
\end{figure}

\begin{figure}
	\centering
	\includegraphics[width=0.4\textwidth]{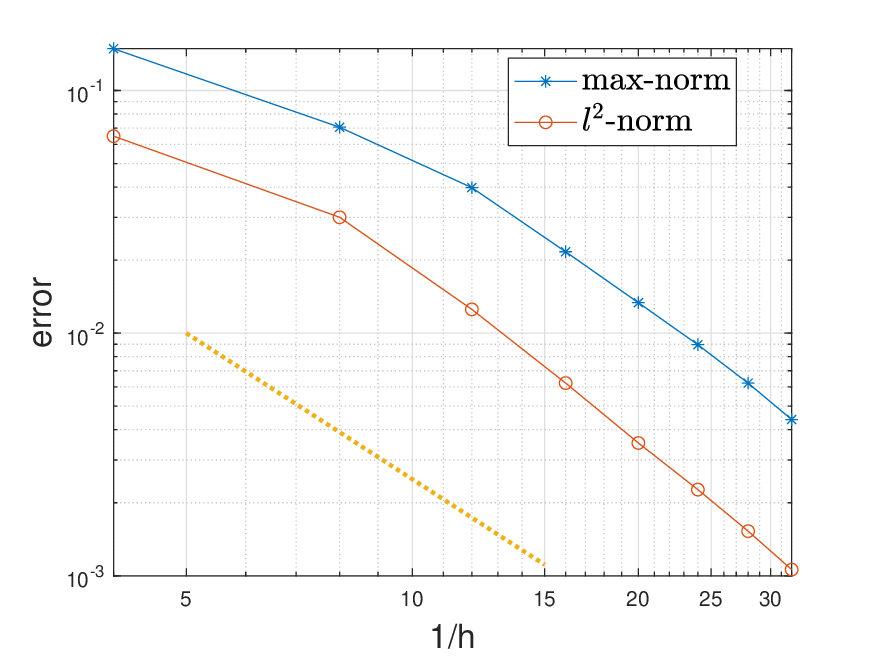}
 	\caption{(Example 1.)  The errors in \( l^2 \)-norm and max-norm between the PML solution and the reference solution at \( t =1.5 \). The dash line is a reference line with slope $-2$. } \label{fig_error}
\end{figure}

Next, we check the numerical stability of Eqs.~\eqref{eq_disPMLmaineq}--\eqref{eq_disPMLmaineq6} truncated with the homogeneous Dirichlet boundary. The stability is numerically confirmed in Fig.~\ref{fig_stab} by measuring the long time behavior of the energy norm of solution. The energy of numerical solution is defined by
\begin{align}
 	E(t_n) = \frac{1}{2|\Omega|} \sum_{\bi\in\Omega} \left| \frac{u_{\bi}(t_{n+1})-u_\bi(t_{n-1})}{2\Delta t} \right|^2 + \frac{1}{2|\Omega_\delta|} \sum_{\bi\in\Omega_\delta} \left| \sum_\bk a_\bk u_{\bi+\bk}(t_n) \right|^2,
\end{align}
where \( \Omega_\delta:=\{\bi\in\Omega|\ \bi+\bk\in\Omega\ \mbox{for all}\ a_\bk\neq0\} \). The PML solution and reference solution are solved by mesh  size \( h=1/16 \).

% \clr{It is observed that the PML solution is long-time stable.}

\begin{figure}
	\centering
	\includegraphics[width=0.4\textwidth]{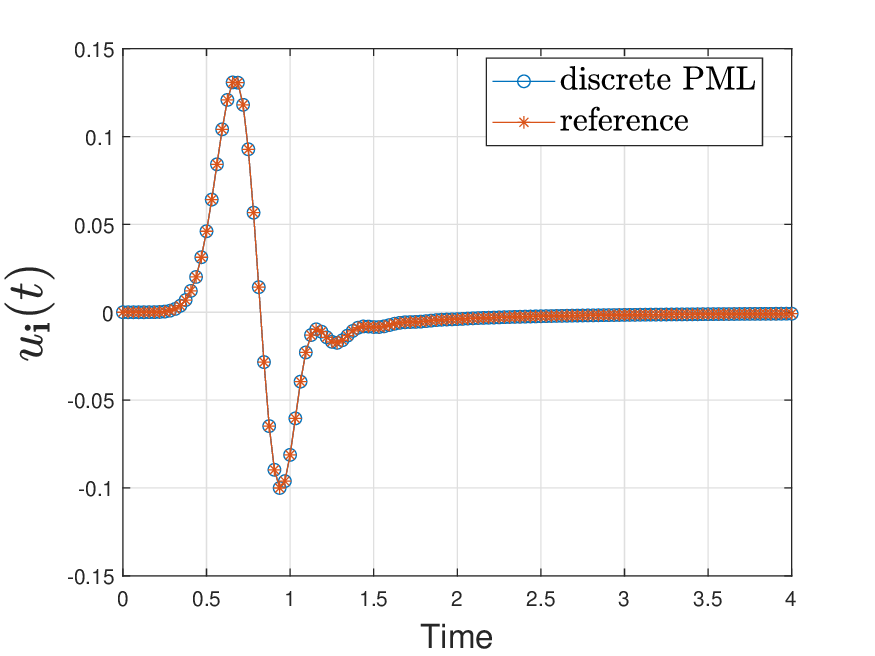}
	\includegraphics[width=0.4\textwidth]{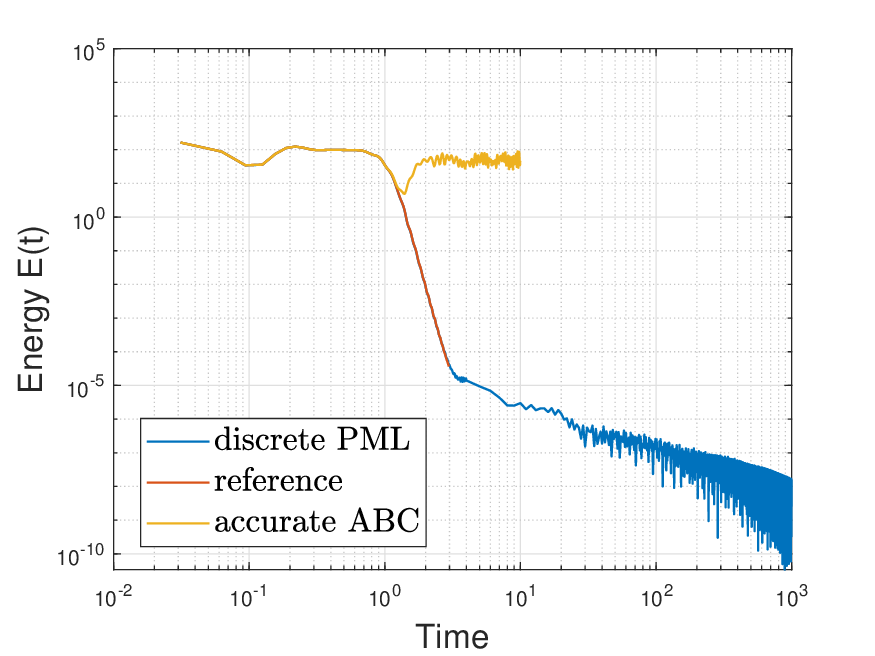}
% 	\includegraphics[width=0.4\textwidth]{}
% 	\caption{(Example 1.) Left: Time evolution of the solutions at \( \bx=(0.5,0.5) \). Right: The energy evolution of the discrete PML solution, reference solution and the accurate ABC solution. Numerical solutions are solved under the mesh size \( h=1/16 \). } \label{fig_stab}
	\caption{(Example 1.) Left: Time evolution of the solutions at \( \bx=(0.5,0.5) \). Right: The energy evolution of the discrete PML and reference solutions. Numerical solutions are solved under the mesh size \( h=1/16 \). } \label{fig_stab}
\end{figure}

Finally, we give a CPU time comparison between the discrete PML and the accurate ABCs (cf. \cite{pang2022accurate,pang2023accurate,ji2021accurate}) in Table~\ref{tab_ex1_1}. The computations were done on Intel 8358 Processor. The results clearly show that the computational efficiency of the discrete PML method is significantly improved comparing with that of accurate ABC.

%Finally, we compare the discrete PML and accurate absorbing boundary condition which is referred to \cite{pang2022accurate,pang2023accurate,ji2021accurate}). The right graph of Fig.~\ref{fig_stab} plots the energy norm of solutions, where it is observed that the discrete PML is much more stable in long time. Computational efficiency measured by CPU time for the discrete PML is illustrated in Table~\ref{tab_ex1_1}. The computations were done on Intel 8358 Processor.

%
% Similar results
%can also be illustrated by calculating CPU time following from Tables 4, 5, and 6. The com-
%putations were done on Dell Latitude D620 2600.
%Tables 4–6 illustrate the computational efficiency for our method with CPU time.
\begin{table}
\centering
\begin{tabular}{l c c c c c c}
\hline
Method/($h$,$t$) 	& (1/8,4)	&  (1/8,8) 		&  (1/8,16) 	&  (1/16,4) 	&  (1/16,8) 	&  (1/16,16)		\\ \hline
discrete PML		& 0.86	& 	1.76		&	3.15		&	4.84		&	9.38		&	19.16		\\
accurate ABC  			&547.40	&  	1062.04	&	1937.77	&	2891.31 	& 6266.84		&	12772.83		\\ \hline 
\end{tabular}
\caption{(Example 1.) Comparison of CPU time (in seconds) between the discrete PML and the accurate absorbing boundary condition for different mesh size \( h \) and final time \( t \). }\label{tab_ex1_1}
\end{table}

\textbf{Example 2.} We consider the kernel in the form of 
\begin{align}
 	\gamma(\bx-\bx') = \frac{4}{\pi\delta^2 |\bx-\bx'|^2} 1_{\mathcal{H}_{\bx-\bx'}},
\end{align}
with the horizon parameter \( \delta=1/4 \), where \( 1_{\mathcal{H}_{\bx-\bx'}} \) is a Heaviside function supported in \( \mathcal{H}_{\bx-\bx'} \). In the simulations,  we simply choose the PML coefficients \( \sigma_l^\alpha = 2/h \).

Fig.~\ref{fig_ex3solutions} presents the simulation results at different time instances. The whole computational domain is discretized by \( h=1/16 \) and \( \Delta t=h/32 \). One can also observe that the PML solution is almost identical to the reference solution in the physical domain, and the wave enters the PML without any significant reflection and decays exponentially.

\begin{figure}[t]
	\centering
 	\includegraphics[width=0.3\textwidth]{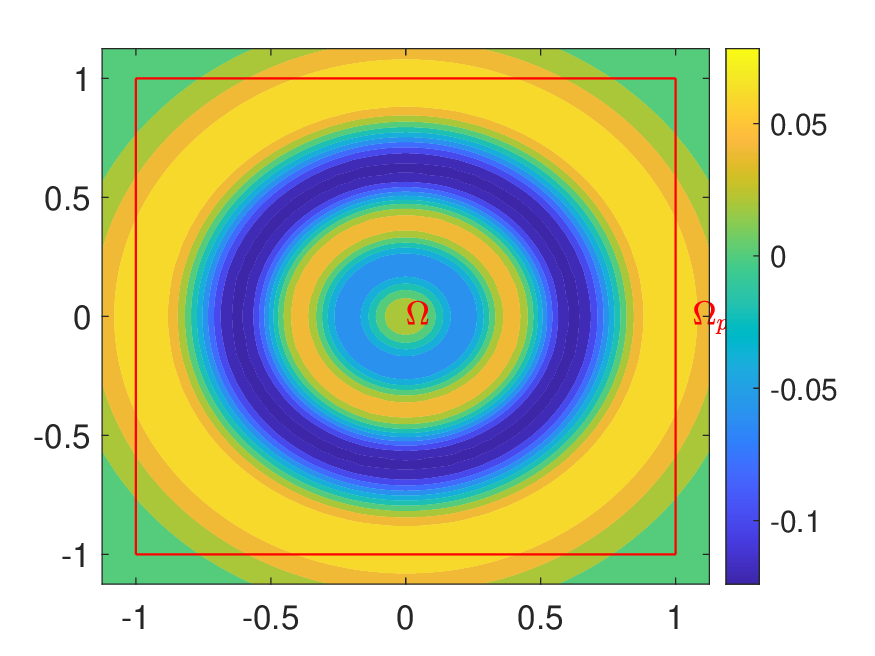}
 	\includegraphics[width=0.3\textwidth]{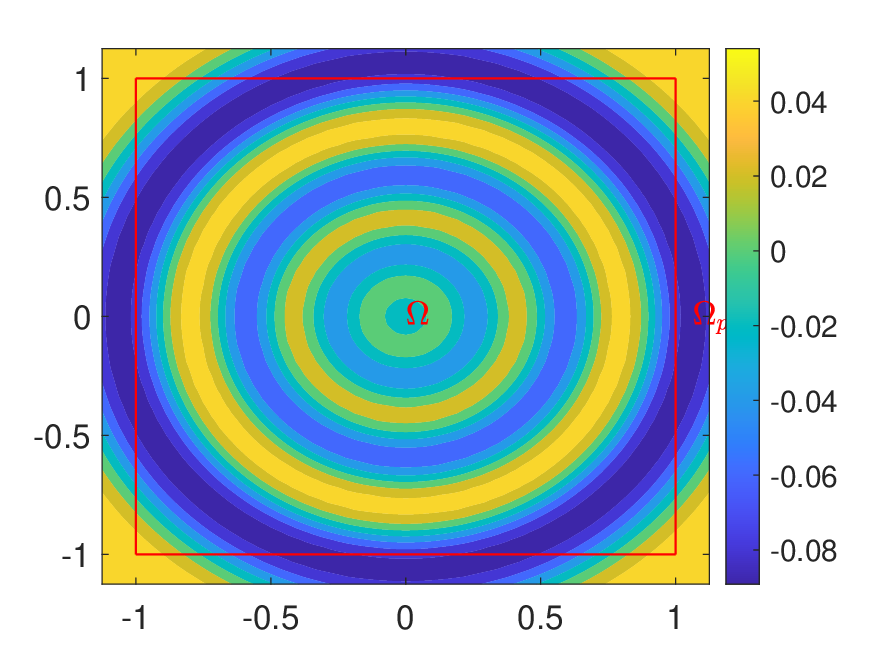}
 	\includegraphics[width=0.3\textwidth]{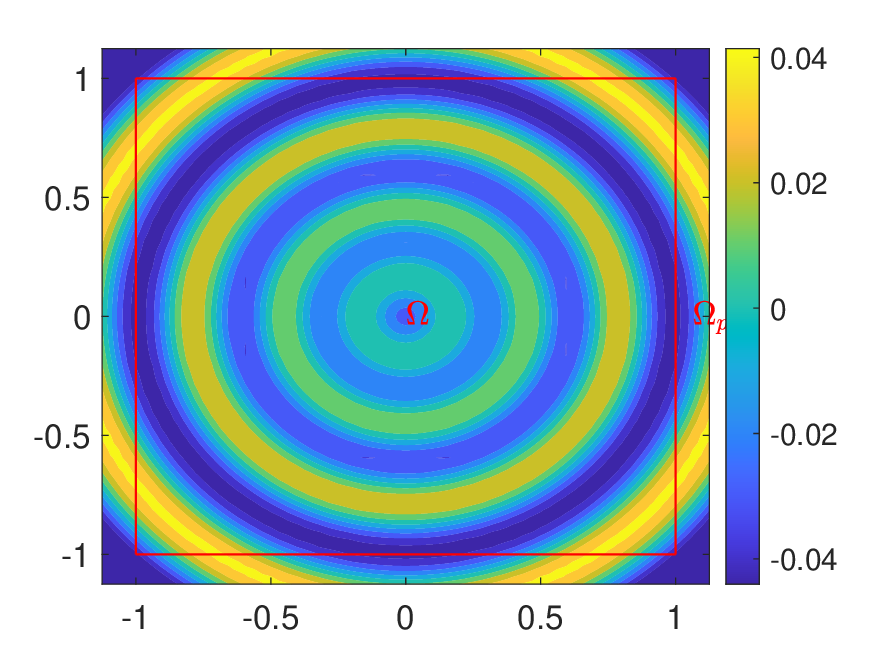}
 	\includegraphics[width=0.3\textwidth]{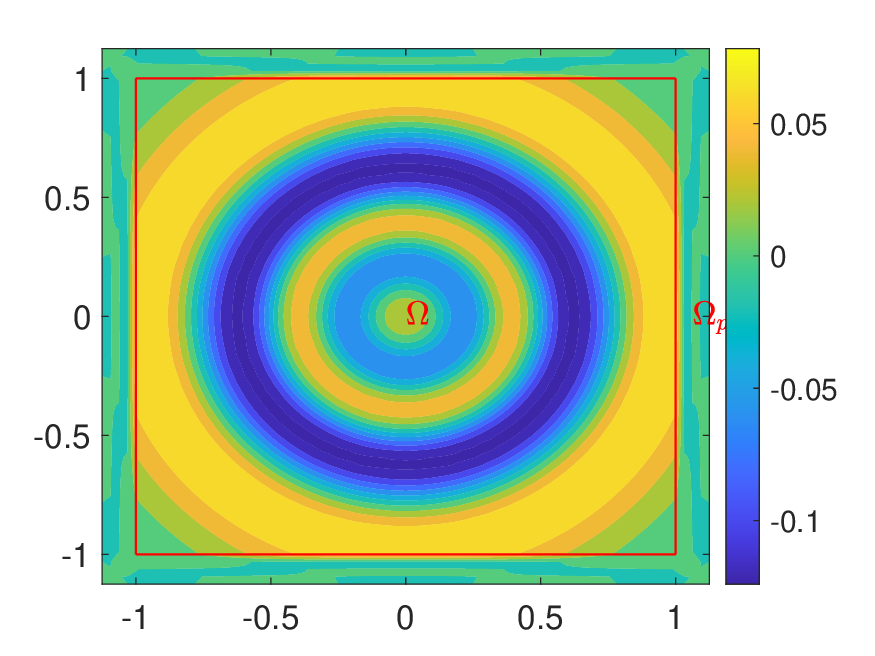}
 	\includegraphics[width=0.3\textwidth]{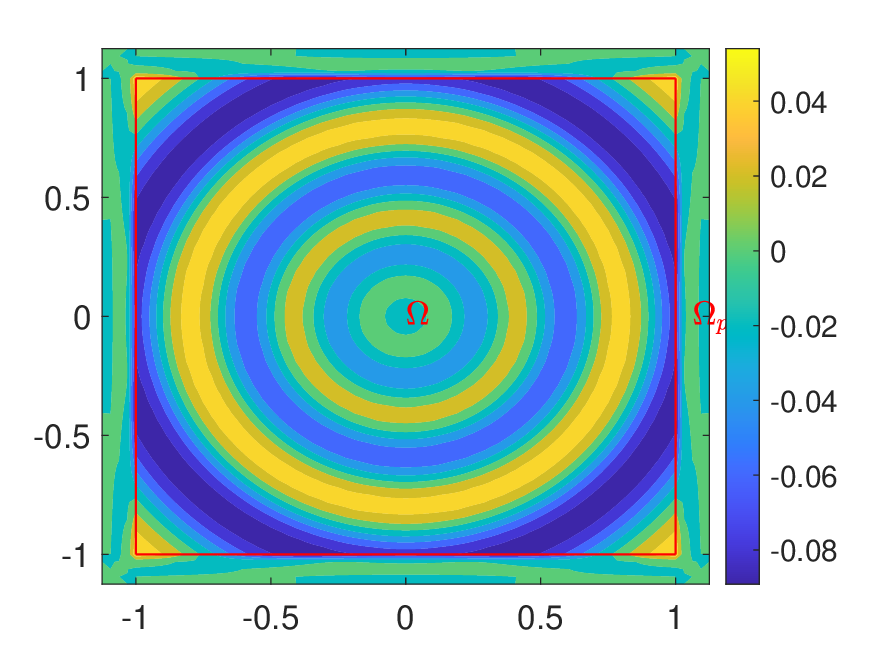}
 	\includegraphics[width=0.3\textwidth]{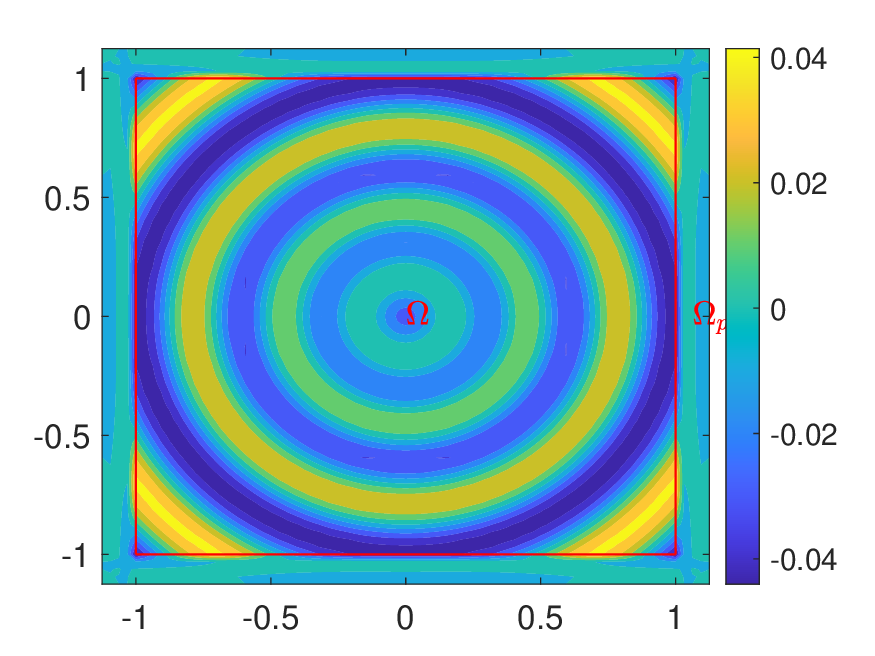}
 	\caption{(Example 2.) Waves in the PML (shown as grids) are damped exponentially. The solution in the near field at different time instants. Top, from left to right: reference solutions at \( t=1,1.5,2 \). Bottom, from left to right: PML solutions at \( t=1,1.5,2 \).} \label{fig_ex3solutions}
\end{figure}

Finally, we  check the numerical stability by measuring the long time behavior of the energy norm of solutions  in Fig.~\ref{fig_ex3stab}. In additoin, the results on the accuracy of the quadrature-based finite difference scheme and the comparison of computation performance are almost the same as those in Example 1, we omit them here. 

%The energy of numerical solutions is defined by
%\begin{align}
% 	E(t_n) = \frac{1}{2|\Omega|} \sum_{\bi\in\Omega} \left| \frac{u_{\bi}(t_{n+1})-u_\bi(t_{n-1})}{2\Delta t} \right|^2 + \frac{1}{2|\Omega_\delta|} \sum_{\bi\in\Omega_\delta} \left| \sum_\bk a_\bk u_{\bi+\bk}(t_n) \right|^2,
%\end{align}
%where \( \Omega_\delta:=\{\bi\in\Omega|\ \bi+\bk\in\Omega\ \mbox{for all}\ a_\bk\neq0\} \). 

\begin{figure}[t]
	\centering
	\includegraphics[width=0.4\textwidth]{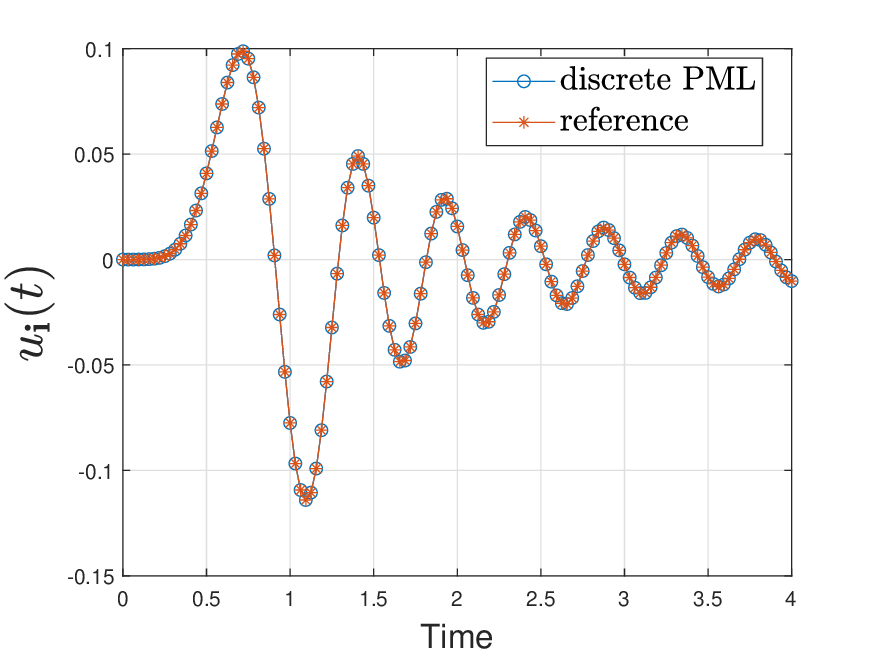}
 	\includegraphics[width=0.4\textwidth]{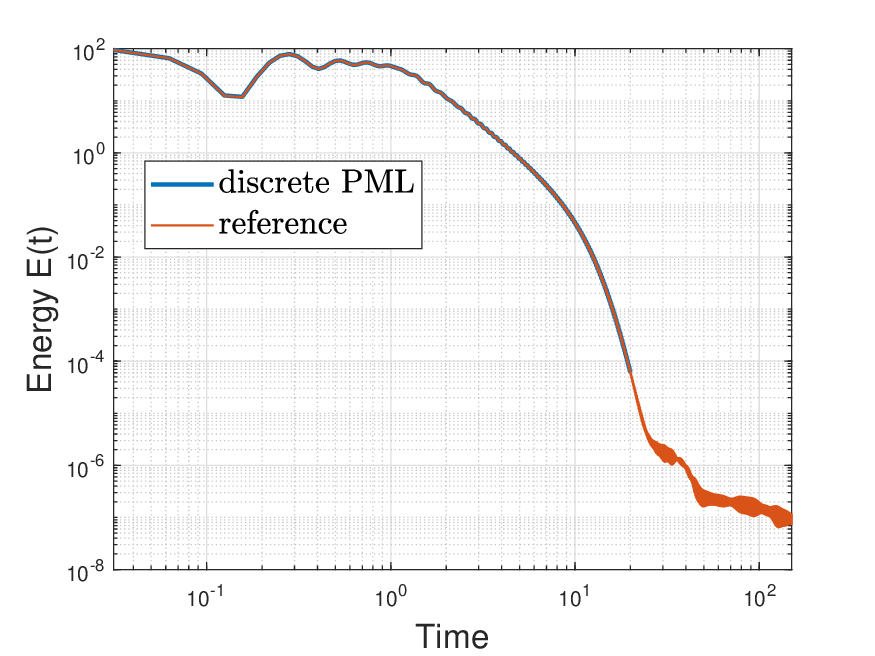}
 	\caption{(Example 2.) Left: Time evolution of the solutions at \( \bx=(0.5,0.5) \). Right: Comparison of the energy evolution in time between the discrete PML solution and reference solution.} \label{fig_ex3stab}
\end{figure}

%We here do not show numerical results for other kernels, such as
%\begin{align}
%	\gamma(\bx-\bx') = \frac{4}{\pi\epsilon^4} e^{-\frac{|\bx-\bx'|^2}{\epsilon^2}},\quad \gamma(\bx-\bx') = \frac{4}{\pi\epsilon^4} \frac{e^{-\frac{|\bx-\bx'|^2}{\epsilon^2}}}{|\bx-\bx'|^2}
%\end{align}
%considered in \cite{DuHanZhangZheng,wildman2012a,du2018nonlocal,hermann2023dirichlet}, because their behavior is similar to the kernel described above.

%with \( 0\leq r\leq \delta \) and \( \delta=0.25 \). this kernel is cited \cite{pang2023accurate}
%\eqref{eq_constantkernel} is considered in \cite{hermann2023dirichlet}.

\section{Conclusion}
This paper presents a derivation of a discrete PML that has no numerical reflection and absorbs scattering waves perfectly. After truncating the domain, both numerical stability and exponentially small residual waves are observed. The discrete PML is for one specific discretized PD scalar wave equation. It provides a general framework for designing a  non-reflecting boundary, regardless of whether the model is local or nonlocal, although a discrete wave equation based on other schemes (such as nonconforming methods) may require different discrete Cauchy-Riemann relations. The current work also relies on taking Fourier transform in time or time-harmonic element wave. In addition, such a treatment may need to be revised for time fractional wave equations and peridynamic elastic waves in unbounded domains.
The stability and effectiveness of the discrete PML are shown numerically, and the corresponding rigorous analysis in the continuous theory would be an important research in the future.

\section*{Acknowledgements}

This work is supported in National Key Research and Development Project of China under grants No. 2024YFA1012600, NSFC under grants No. 12071401, 12171376, and the science and technology innovation Program of Hunan Province 2022RC1191. This work was supported in part by the High Performance Computing Platform of Xiangtan University.

\bibliographystyle{siam}
\bibliography{referrence}

\end{document}